\newcommand{\Ex}{ \mathbb{E} }
\renewcommand{\P}{\mathbb{P}}
\def\esssup_#1{\underset{#1}{\mathrm{ess\,sup\, }}}
\def\essinf_#1{\underset{#1}{\mathrm{ess\,inf\, }}}
\def\argmax_#1{\underset{#1}{\mathrm{arg\,max\, }}}
\def\argmin_#1{\underset{#1}{\mathrm{arg\,min\, }}}
\newcommand{\Fx}{\mathbb{F} }
\newcommand{\F}{\mathcal{F}}
\newcommand{\R}{\mathds{R}}
\newtheorem{theorem}{Theorem}[section]
\numberwithin{equation}{section}
\newtheorem{proposition}[theorem]{Proposition}
\newtheorem{remark}[theorem]{Remark}
\newtheorem{lemma}[theorem]{Lemma}
\definecolor{Red}{rgb}{1.00, 0.00, 0.00}
\definecolor{DRed}{rgb}{0.5, 0.00, 0.00}
\definecolor{Blue}{rgb}{0.00, 0.00, 1.00}
\definecolor{Green}{rgb}{0.0, 0.4, 0.0}
\title{A Decomposition-Homogenization Method for Robin Boundary Problems on the Nonnegative Orthant}
\author{
Lijun Bo \thanks{Email: lijunbo@ustc.edu.cn,  School of Mathematics and Statistics, Xidian University, Xi'an, 710126, China.}
\and
Yijie Huang \thanks{Email: huang1@mail.ustc.edu.cn, School of Mathematical Sciences, University of Science and Technology of China, Hefei, 230026, China.}
\and
Xiang Yu \thanks{Email: xiang.yu@polyu.edu.hk, Department of Applied Mathematics, The Hong Kong Polytechnic University, Hung Hom, Kowloon, Hong Kong.}
}
\date{\vspace{-0.5in}}
\begin{document}
\maketitle

\begin{abstract}

This paper studies the existence and uniqueness of a classical solution to a type of Robin boundary problems on the nonnegative orthant. We propose a new decomposition-homogenization method for the Robin boundary problem based on probabilistic representations, which leads to two auxiliary Robin boundary problems admitting some simplified probabilistic representations. The auxiliary probabilistic representations allow us to establish the existence of a unique classical solution to the original Robin boundary problem using some stochastic flow analysis.

\vspace{0.1in}

\noindent\textbf{Keywords}: Robin boundary problem; decomposition-homogenization method; probabilistic representation; classical solution; stochastic flow analysis\\

\noindent{\textbf{MSC 2020}}: 35M12; 35A09; 60H10 
\end{abstract}

\section{Introduction}\label{sec:intro}

The Feynman-Kac formula for the linear second-order PDE with Neumann boundary conditions on a bounded domain with smooth boundary has been well studied where the probabilistic representation involves a reflecting Brownian motion and its local time on the boundary.  As a pioneer study, \cite{Hsu1985} established the probabilistic solution for the linear elliptic PDE in $\R^d$ with a Neumann boundary condition on a bounded domain with smooth boundary. This probabilistic solution is understood as a weak solution to the PDE problem and is proved to be a classical solution in \cite{Brosamler1976} and \cite{Hsu1985} if the solution satisfies some smoothness conditions. By unifying and generalizing the results in \cite{Brosamler1976}, \cite{Hsu1985}  and \cite{masong1988} provided a probabilistic approach to the Neumann problem of Schr\"odinger equations without the assumption of the finiteness of the gauge. On a bounded domain in $\R^d$ with smooth boundary, \cite{zhang1990} gave a martingale formulation of the Neumann problem of Schr\"odinger operator with measure potential, and provided an analytic characterization of the martingale formulation by using the Dirichlet forms in terms of reflecting Brownian motion (RBM) and its boundary local time. \cite{Hu1993} showed that a classical solution to a class of the Neumann problems of quasilinear elliptic PDEs has a probabilistic representation on a bounded domain in $\R^d$ with a $C^3$-boundary by studying the backward stochastic differential equations (BSDEs) in the infinite horizon case. \cite{wongetal2022} established the existence and uniqueness of the weak solution to a Neumann problem of an elliptic PDE with a singular divergence term on a bounded domain in $\R^d$ with smooth boundary. Therein, a probabilistic approach is applied by studying the BSDE associated with the PDE. The probabilistic solution of the Neumann problem can make numerical implementations and approximations efficiently.  \cite{MilTre2002} considered a class of layer methods for solving the Neumann problem for semilinear parabolic equations based on probabilistic solutions. \cite{zhouetal2017} proposed numerical methods for computing the boundary local time of RBM on a bounded domain in $\R^3$, and its application in finding accurate approximation of the local time and discretization of the probabilistic representation of the Neumann problem using the computed local time. Some research extensions have been carried out from the Neumann boundary problem to the Robin (also known as the third type) boundary problem that specifies the boundary conditions on the linear combination of the solution and its derivative.  \cite{Papanicolaou1990} established a probabilistic solution for the the Robin boundary problem on a bounded domain with $C^3$ boundary. This probabilistic solution can be proved to be unique, continuous and equivalent to the weak analytic solution.  \cite{LeiShatre23} studied elliptic PDEs with Robin boundary conditions and discussed their connection to reflected diffusion processes via the Feynman-Kac formula.

The previous studies on probabilistic solutions to Neumann boundary problems and Robin boundary problems predominantly assume the smoothness of the boundary of the (bounded) domain. The local time acted on the smooth boundary as an additive functional with respect to the surface measure admits an explicit form in terms of transition density of the reflected state process (\citealt{wongetal2022}). However, this convenient form is not available when the state space of the reflected process has corners. In this paper, we consider the following Robin boundary problem of parabolic type defined on the nonnegative orthant for the spatial variables:
\begin{align}\label{eq:PDE-multidim}
\begin{cases}
\displaystyle (\partial_t+\mathcal{L})u(t,\mathbf{x}) = \rho u(t,\mathbf{x}),\quad \text{on}~(t,\mathbf{x})\in[0,T)\times\R_+^d,\\[0.6em]
\displaystyle u(T,\mathbf{x})=g(\mathbf{x}),\quad \forall \mathbf{x}\in\overline{\R}_+^{d},\\[0.6em]
\displaystyle \partial_{x_i}u(t,(\mathbf{x}^{-i},0))+c_i u(t,(\mathbf{x}^{-i},0))=f_i(t,\mathbf{x}^{-i}),\quad \forall (t,\mathbf{x}^{-i})\in[0,T)\times\overline{\R}_+^{d-1},\\[0.4em]
\displaystyle \qquad\qquad\qquad\qquad\qquad\qquad\qquad\qquad\qquad\qquad\quad~i=1,\ldots,d,
\end{cases}
\end{align}
where $d\geq2$, $\rho\geq0$ is the killing rate, $T>0$ is the terminal time, $(c_1,\ldots,c_d)\in\R^d$,  $(\mathbf{x}^{-i},0):=(x_1,x_2,...,x_{i-1},0,,x_{i+1},...,x_d)$, $\R_+:=(0,\infty)$ and $\overline{\R}_+=[0,\infty)$. The second-order differential operator $\mathcal{L}$ acting on $\phi\in C^2(\R_+^d)$ is given by
\begin{align}\label{eq:Lmulti}
\mathcal{L}\phi(\mathbf{x}) &:= \frac{1}{2}\sum_{i,j=1}^d \left(\sum_{k=1}^m\sigma_{ik}\sigma_{jk}\right)\partial_{x_i x_j}^2\phi(\mathbf{x})+\sum_{i=1}^d\mu_i \partial_{x_i}\phi(\mathbf{x})
\end{align}
with $(\sum_{k=1}^m\sigma_{ik}\sigma_{jk})_{i,j=1}^d$ being a positive definite matrix and $\mu_i\in\R$ for $i=1,\ldots,d$.

The following assumption on the terminal condition and boundary functions is imposed throughout the paper:
\begin{itemize}
\item[{\bf(A)}]  $g\in C^2(\overline{\R}_+^d)$ and $f_i\in C^{1,2}([0,T]\times\overline{\R}_+^{d-1})$ for all $i=1,\ldots,d$. There exists some constant $C>0$ and $q\geq 1$  such that $|h(\mathbf{x})|\leq C(1+|\mathbf{x}|^q)$, $|\partial_{x_j}h(\mathbf{x})|\leq C(1+|\mathbf{x}|^q)$ and $|\partial_{x_jx_k}^2h(\mathbf{x})|\leq C(1+|\mathbf{x}|^q)$ for $\mathbf{x}\in\overline{\R}_+^d$, $h\in\{f_i(t),g\}$ and $j,k=1,\ldots,d$.
\end{itemize}
 For the domain with non-smooth boundary,  \cite{DupuisIshii1991} handled the well-posedness of a class of fully nonlinear elliptic PDEs with the oblique derivative conditions in the viscosity sense.  \cite{IsKu2022} established a comparison theorem for viscosity sub- and supersolutions of the nonlinear Neumann problem of elliptic PDEs on a two-dimensional quadrant.  With Assumption {\bf(A)}, a classical solution $u$ of Robin boundary problem \eqref{eq:PDE-multidim} (i.e., $u\in C^{1,2}([0,T)\times\R_+^d)\cap C([0,T]\times\overline{\R}_+^d)$ and it solves Eq.~\eqref{eq:PDE-multidim}) can be expressed by the Feynman-Kac formula that
 (\citealt{Freidlin1985,Papanicolaou1990}):
\begin{align}\label{eq:PRprimalNeumann}
u(t,\mathbf{x}) &= -\sum_{i=1}^d\Ex\left[\int_t^{T}e^{-\rho (s-t)+\sum_{k=1}^d c_k L_s^{k,t}}f_i(s,\mathbf{X}_s^{-i,t,\mathbf{x}})dL_s^{i,t}\right]\nonumber\\
&\quad+\Ex\left[e^{-\rho(T-t)+\sum_{k=1}^d c_k L_T^{k,t}} g(\mathbf{X}_T^{t,\mathbf{x}})\right],\quad \forall (t,\mathbf{x})\in[0,T]\times\overline{\R}_+^d,
\end{align}
where $\mathbf{X}^{t,x}=(X_s^{1,t,x_1},\ldots,X_s^{d,t,x_d})_{s\in[t,T]}$ is the state process whose $i$-th component is given by the reflected Brownian motion with drift that, for $i=1,\ldots,d$,
\begin{align}\label{eq:RSDE0}
X_s^{i,t,x_i} &=x_i +  \int_t^s \mu_i dr + \sum_{k=1}^m \int_t^s \sigma_{ik} dB_r^k + L_s^{i,t}\in\overline{\R}_+,\quad \forall s\in[t,T].
\end{align}
Here, on a filtered probability space $(\Omega,\F,\Fx,\mathbb{P})$ with the filtration $\Fx=(\F_t)_{t\in[0,T]}$ satisfying usual conditions, $B=(B_s^1,\ldots,B_s^m)_{s\in[t,T]}$ is an $m$-dimensional Brownian motion and $s\to L_s^{i,t}$ is a continuous and non-decreasing process (with $L_t^{i,t}=0$) which increases on the time set $\{s\in[t,T];~X_s^{i,t,x_i}=0\}$ only. In the context of queueing system, the state process $\mathbf{X}^{t,x}$ with a possibly general reflected-direction matrix arises in heavy traffic theory for $d$-station networks of queues (c.f. \citealt{Harrison81}).  We denote the vector process $\mathbf{X}^{-i,t,\mathbf{x}}:=(X_s^{1,t,x_1},\ldots,X_{s}^{i-1,t,x_{i-1}},X_{s}^{i+1,t,x_{i+1}},\ldots,X_{s}^{d,t,x_{d}})_{s\in[t,T]}$.

The aim of this paper is to establish the existence and uniqueness of a classical solution to the Robin boundary problem \eqref{eq:PDE-multidim} of parabolic type. A naive way is to directly verify that the probabilistic representation in \eqref{eq:PRprimalNeumann} is a classical solution to the Robin boundary problem \eqref{eq:PDE-multidim} by applying the stochastic flow technique. However, we stress that, the integral in \eqref{eq:PRprimalNeumann} inside the expectation is with respect to a local time process $L^{i,t}$, which is not independent of the reflected state process $\mathbf{X}^{-i,t,\mathbf{x}}$. It is, in general, difficult to prove the regularity of the probabilistic representation $(t,\mathbf{x})\to u(t,\mathbf{x})$. To overcome this challenge, we contribute in this paper by proposing a new decomposition-homogenization for the Robin boundary problem \eqref{eq:PDE-multidim}, which yields two auxiliary Robin boundary problems whose probabilistic solutions can be verified to be classical solutions using some stochastic flow techniques. 

Let us first state the main result of this paper, whose proof is given in Section \ref{sec:Neumann-inde}.

\begin{theorem}\label{thm:PDE-sol0}
The Robin boundary problem \eqref{eq:PDE-multidim} has a unique classical solution $u\in C^{1,2}([0,T)\times\R_+^d)\cap C([0,T]\times\overline{\R}_+^d)$ satisfying the polynomial growth condition (i.e., there exists a constant $C>0$ and $q\geq 1$ such that $|u(t,\mathbf{x})|\leq C(1+|\mathbf{x}|^q)$). Moreover, this classical solution $u(t,\mathbf{x})$ admits the  probabilistic representation that, for all $(t,\mathbf{x})\in[0,T]\times\overline{\R}_+^d$,
\begin{align}\label{eq:thm-PRprimalNeumann}
u(t,\mathbf{x}) &= \varphi(t,\mathbf{x})+\frac{1}{2}\sum_{i\neq j}\Ex\left[\int_t^{T} e^{-\rho(s-t)+\sum_{k=1}^d c_k L_s^{k,t}}\left(\sum_{k=1}^m\sigma_{ik}\sigma_{jk}\right)\partial_{x_ix_j}^2\varphi(s,\mathbf{X}_s^{t,\mathbf{x}})ds\right].\nonumber\\
&\quad+\Ex\left[e^{-\rho (T-t)+\sum_{k=1}^d c_k L_T^{k,t}} g(\mathbf{X}_T^{t,\mathbf{x}})\right],
\end{align}
where $\varphi\in C^{1,2}([0,T)\times\R_+^d)\cap C([0,T]\times\overline{\R}_+^d)$ admits the probabilistic representation:
\begin{align}\label{eq:varphiinde}
\varphi(t,\mathbf{x}) &=-\sum_{i=1}^d\left\{\frac{{\bf 1}_{c_i\neq 0}}{c_i}\int_t^{T} e^{-\rho(s-t)}\Ex\left[e^{\sum_{j\neq i}c_j \hat{L}_s^{j,t}}f_i(s,\hat{\mathbf{X}}_s^{-i,t,\mathbf{x}})\right]d\Ex[e^{c_i \hat{L}_s^{i,t}}]\right.\\
&\qquad\qquad\left.+{\bf 1}_{c_i=0}\int_t^{T} e^{-\rho(s-t)}\Ex\left[e^{\sum_{j\neq i}c_j \hat{L}_s^{j,t}}f_i(s,\hat{\mathbf{X}}_s^{-i,t,\mathbf{x}})\right]d\Ex[\hat{L}_s^{i,t}]\right\}.\nonumber
\end{align}
Here, the $i$-th component of the state process $\hat{\mathbf{X}}^{t,\mathbf{x}}=(\hat{X}_s^{1,t,x_1},\ldots,\hat{X}_{s}^{d,t,x_{d}})_{s\in[t,T]}$ is the reflected Brownian motion with drift that, for all $(t,\mathbf{x})\in[0,T]\times\overline{\R}_+^d$,
\begin{align}\label{eq:RSDEi}
\hat{X}_s^{i,t,x_i}&=x_i +  \int_t^s \mu_i dr + \sum_{k=1}^m \int_t^T \sigma_{ik}dW_s^{k,i} + \hat{L}_s^{i,t}\in\overline{\R}_+,
\end{align}
where, on the filtered probability space $(\Omega,\F,\Fx,\mathbb{P})$, $W^{k,i}=(W_t^{k,i})_{t\in[0,T]}$ for $(k,i)\in\{1,\ldots,m\}\times\{1,\ldots,d\}$ are independent Brownian motions, and $s\to\hat{L}_s^{i,t}$ (with $\hat{L}_t^{i,t}=0$) is a continuous and non-decreasing process that increases on the time set $\{s\in[t,T];~\hat{X}_s^{i,t,x_i}=0\}$ only.  In \eqref{eq:varphiinde}, the vector process is given by
\begin{align*}
\hat{\mathbf{X}}^{-i,t,\mathbf{x}}:=(\hat{X}_s^{1,t,x_1},\ldots,\hat{X}_{s}^{i-1,t,x_{i-1}},\hat{X}_{s}^{i+1,t,x_{i+1}},\ldots,\hat{X}_{s}^{d,t,x_{d}})_{s\in[t,T]},~~i=1,\ldots,d.    
\end{align*}
\end{theorem}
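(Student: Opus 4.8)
The plan is to reduce the original Robin boundary problem \eqref{eq:PDE-multidim} to two auxiliary problems via a decomposition-homogenization procedure, and to verify that each admits a classical solution by a direct stochastic flow analysis of its simplified probabilistic representation. First I would construct the homogenizing function $\varphi$ in \eqref{eq:varphiinde}. The key observation is that if one replaces the fully coupled reflected state process $\mathbf{X}^{t,\mathbf{x}}$ by the decoupled process $\hat{\mathbf{X}}^{t,\mathbf{x}}$ whose components are driven by independent Brownian motions $W^{k,i}$, then the off-diagonal second-order terms of $\mathcal{L}$ disappear from the generator of $\hat{\mathbf{X}}^{t,\mathbf{x}}$, and crucially the local time $\hat{L}^{i,t}$ becomes independent of the remaining components $\hat{\mathbf{X}}^{-i,t,\mathbf{x}}$. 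This independence is what makes the $dL$-integral in the Feynman–Kac representation tractable: the expectation factorizes, the integral against $d\Ex[e^{c_i\hat L^{i,t}_s}]$ (or $d\Ex[\hat L^{i,t}_s]$) is an ordinary Lebesgue–Stieltjes integral against a deterministic increasing function, and one can differentiate in $(t,\mathbf{x})$ by differentiating only the smooth factor $\Ex[e^{\sum_{j\neq i}c_j\hat L^{j,t}_s}f_i(s,\hat{\mathbf{X}}^{-i,t,\mathbf{x}}_s)]$. I would invoke the regularity of stochastic flows for reflected SDEs with drift on $\overline{\R}_+$ (one-dimensional reflection, hence standard) together with Assumption~\textbf{(A)} and the polynomial-growth/moment bounds on $\hat{\mathbf{X}}$ and $e^{c\hat L}$ to show $\varphi\in C^{1,2}([0,T)\times\R^d_+)\cap C([0,T]\times\overline{\R}^d_+)$ with polynomial growth, and that $\varphi$ satisfies the \emph{same} Robin boundary conditions $\partial_{x_i}\varphi(t,(\mathbf{x}^{-i},0))+c_i\varphi(t,(\mathbf{x}^{-i},0))=f_i(t,\mathbf{x}^{-i})$ — this last point should follow from the local-time identities that characterize $\hat L^{i,t}$ at the boundary, exactly as in the classical Feynman–Kac derivation of \eqref{eq:PRprimalNeumann}, but now with the decoupled generator.

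Next I would compute $(\partial_t+\mathcal{L})\varphi - \rho\varphi$. Since $\varphi$ solves the parabolic equation associated with the \emph{decoupled} generator $\hat{\mathcal{L}}:=\mathcal{L}-\frac12\sum_{i\neq j}(\sum_k\sigma_{ik}\sigma_{jk})\partial^2_{x_ix_j}$ (up to the source built from the $f_i$, which is the homogenization — the boundary data is absorbed), we get $(\partial_t+\mathcal{L})\varphi-\rho\varphi = \frac12\sum_{i\neq j}(\sum_k\sigma_{ik}\sigma_{jk})\partial^2_{x_ix_j}\varphi =: -\Psi(t,\mathbf{x})$ in $[0,T)\times\R^d_+$, with $\varphi(T,\cdot)=0$ (by construction the integral in \eqref{eq:varphiinde} vanishes at $s=t=T$). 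Then I set $w:=u-\varphi$ and observe that $w$ must solve the \emph{second} auxiliary problem: $(\partial_t+\mathcal{L})w=\rho w + \Psi$ on $[0,T)\times\R^d_+$, with terminal condition $w(T,\cdot)=g$ and \emph{homogeneous} Robin boundary conditions $\partial_{x_i}w(t,(\mathbf{x}^{-i},0))+c_iw(t,(\mathbf{x}^{-i},0))=0$ for all $i$. This homogeneous-boundary problem (with full generator $\mathcal{L}$ but no boundary source) has the Feynman–Kac representation
\begin{align*}
w(t,\mathbf{x}) &= \Ex\!\left[e^{-\rho(T-t)+\sum_k c_k L_T^{k,t}}g(\mathbf{X}_T^{t,\mathbf{x}})\right] + \Ex\!\left[\int_t^T e^{-\rho(s-t)+\sum_k c_k L_s^{k,t}}\Psi(s,\mathbf{X}_s^{t,\mathbf{x}})\,ds\right],
\end{align*}
and substituting $\Psi=-\frac12\sum_{i\neq j}(\sum_k\sigma_{ik}\sigma_{jk})\partial^2_{x_ix_j}\varphi$ and $u=w+\varphi$ recovers exactly \eqref{eq:thm-PRprimalNeumann}. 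The point of the decomposition is that this $w$-representation contains \emph{no} $dL$-integral: all the boundary-local-time integrands have been pushed into $\varphi$, and what remains is a terminal payoff plus an absolutely continuous $ds$-integral with smooth, polynomially-growing integrand $\Psi$. Its regularity in $(t,\mathbf{x})$ then follows by the same stochastic flow argument applied to the reflected SDE system \eqref{eq:RSDE0}, using differentiability of $\mathbf{x}\mapsto\mathbf{X}^{t,\mathbf{x}}_s$ and $\mathbf{x}\mapsto L^{k,t}_s$ and dominated convergence with the moment bounds from \textbf{(A)}.

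For uniqueness, I would argue that any classical solution $u$ of \eqref{eq:PDE-multidim} with polynomial growth satisfies, via Itô's formula applied to $s\mapsto e^{-\rho(s-t)+\sum_k c_k L^{k,t}_s}u(s,\mathbf{X}^{t,\mathbf{x}}_s)$ together with the boundary conditions (which convert the $dL^{i,t}_s$ terms coming from $\partial_{x_i}u$ into the $f_i\,dL^{i,t}_s$ terms), the representation \eqref{eq:PRprimalNeumann}; since \eqref{eq:thm-PRprimalNeumann} is an algebraic rearrangement of \eqref{eq:PRprimalNeumann} under the identity $(\partial_t+\mathcal L)\varphi-\rho\varphi=-\Psi$ and the boundary identity for $\varphi$, the solution is uniquely pinned down. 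The main obstacle, and the technical heart of the argument, is the stochastic flow analysis establishing $C^{1,2}$-regularity up to the parabolic interior of $[0,T)\times\R^d_+$ for the two probabilistic representations — in particular, justifying differentiation under the expectation of the boundary-local-time functionals $\hat L^{i,t}_s$ (for $\varphi$) and $L^{k,t}_s$ (for $w$) with respect to both the initial time $t$ and the initial point $\mathbf{x}$, and controlling the exponential weights $e^{\sum_k c_k L^{k,t}_s}$ when some $c_k>0$; here the decoupling in $\hat{\mathbf{X}}$ is essential because it turns the problematic $dL$-integral in \eqref{eq:PRprimalNeumann} into a Stieltjes integral against a \emph{deterministic} function, removing the dependence between $L^{i,t}$ and $\mathbf{X}^{-i,t,\mathbf{x}}$ that obstructs the naive approach. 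I would expect the detailed derivative estimates and the boundary-condition verification for $\varphi$ and $w$ to be carried out in the sections referenced as Section~\ref{sec:Neumann-inde}.
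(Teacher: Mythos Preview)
Your proposal is correct and follows essentially the same decomposition--homogenization strategy as the paper: $\varphi$ solves the auxiliary problem with the decoupled operator $\mathcal{L}^0$ (your $\hat{\mathcal L}$) and inhomogeneous boundary data, $w$ (the paper's $\psi$) solves the problem with the full $\mathcal{L}$, homogeneous Robin boundary, and source $\frac12\sum_{i\neq j}(\sum_k\sigma_{ik}\sigma_{jk})\partial^2_{x_ix_j}\varphi$, and uniqueness follows from It\^o's formula applied to $e^{-\rho(s-t)+\sum_k c_k L^{k,t}_s}u(s,\mathbf{X}^{t,\mathbf{x}}_s)$. One minor slip: your Feynman--Kac formula for $w$ has the wrong sign on the $\Psi$-integral (it should be $-\Ex[\int_t^T e^{\cdots}\Psi\,ds]$, since $(\partial_t+\mathcal{L}-\rho)w=\Psi$), which after substituting $\Psi=-\frac12\sum_{i\neq j}(\sum_k\sigma_{ik}\sigma_{jk})\partial^2_{x_ix_j}\varphi$ indeed yields the $+\frac12\sum$ in \eqref{eq:thm-PRprimalNeumann}.
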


The remainder of this paper is organized as follows. Section~\ref{sec:method} describes our decomposition-homogenization method for the Robin boundary problem \eqref{eq:PDE-multidim} in the nonnegative orthant. Section~\ref{sec:Neumann-inde} establishes the smoothness of probabilistic solutions to two auxiliary Robin boundary problems using some stochastic flow techniques and concludes Theorem~\ref{thm:PDE-sol0}. The proofs of auxiliary results in previous sections are reported in Section~\ref{appendix:proof}.

\section{Decomposition and Homogenization Method}\label{sec:method}

In this section, we introduce the decomposition-homogenization method to solve the Robin boundary problem \eqref{eq:PDE-multidim}. This method yields two auxiliary Robin boundary problems of parabolic PDEs, and the probabilistic representations of the two auxiliary Robin boundary problems have ``nice" structures, which can be verified to be classical solutions by applying stochastic flow arguments.

Let us first consider the following auxiliary Robin boundary problem that
\begin{align}\label{eq:PDE-multidimL0}
\begin{cases}
\displaystyle (\partial_t+\mathcal{L}^0)\varphi(t,\mathbf{x}) = \rho\varphi(t,\mathbf{x}),\quad \text{on}~(t,\mathbf{x})\in[0,T)\times\R_+^d,\\[0.6em]
\displaystyle \varphi(T,\mathbf{x})=0,\quad \forall \mathbf{x}\in\overline{\R}_+^d,\\[0.6em]
\displaystyle \partial_{x_i}\varphi(t,(\mathbf{x}^{-i},0))+c_i \varphi(t,(\mathbf{x}^{-i},0))=f_i(t,\mathbf{x}^{-i}),\quad \forall (t,\mathbf{x}^{-i})\in[0,T)\times\overline{\R}_+^{d-1},\\[0.4em]
\displaystyle \qquad\qquad\qquad\qquad\qquad\qquad\qquad\qquad\qquad\qquad\quad~i=1,\ldots,d.
\end{cases}
\end{align}
Here, the second-order differential operator $\mathcal{L}^0$ acting on $\phi\in C^2(\R_+^d)$ is defined by
\begin{align}\label{eq:Lmulti0}
\mathcal{L}^0\phi(\mathbf{x}) &:= \frac{1}{2}\sum_{i=1}^d \left(\sum_{k=1}^m\sigma_{ik}^2\right)\partial_{x_i}^2\phi(\mathbf{x}) +\sum_{i=1}^d\mu_i\partial_{x_i}\phi(\mathbf{x}).
\end{align}
We assume that the auxiliary Robin boundary problem \eqref{eq:PDE-multidimL0} above has a classical solution $\varphi:[0,T]\times\overline{\R}_+^d\to\R$. Then, we introduce the next Robin boundary problem with homogeneous Robin boundary conditions that
\begin{align}\label{eq:PDE-multidim-homo}
\begin{cases}
\displaystyle (\partial_t+\mathcal{L})\psi(t,\mathbf{x}) = \rho\psi(t,\mathbf{x})-\frac{1}{2}\sum_{i\neq j}\left(\sum_{k=1}^m\sigma_{ik}\sigma_{jk}\right)\partial_{x_ix_j}^2\varphi(t,\mathbf{x}),\\[0.8em]
\displaystyle\qquad\qquad\qquad\qquad\qquad\qquad\qquad \text{on}~(t,\mathbf{x})\in[0,T)\times\R_+^d,\\[0.8em]
\displaystyle \psi(T,\mathbf{x}) = g(\mathbf{x}),\quad \forall \mathbf{x}\in\overline{\R}_+^d,\\[0.6em]
\displaystyle \partial_{x_i}\psi(t,(\mathbf{x}^{-i},0))+c_i \psi(t,(\mathbf{x}^{-i},0))=0,\quad \forall (t,\mathbf{x}^{-i})\in[0,T)\times\overline{\R}_+^{d-1},\\[0.4em]
\displaystyle \qquad\qquad\qquad\qquad\qquad\qquad\qquad\qquad\qquad~i=1,\ldots,d.
\end{cases}
\end{align}
We observe that the auxiliary Robin boundary problem \eqref{eq:PDE-multidimL0} has the principal operator $\mathcal{L}^0$ satisfying
\begin{align}\label{eq:differenceLL0}
(\mathcal{L}-\mathcal{L}^0)\phi=\frac{1}{2}\sum_{i\neq j}\left(\sum_{k=1}^m\sigma_{ik}\sigma_{jk}\right)\partial_{x_ix_j}^2\phi,\quad \forall \phi\in C^2(\R_+^d),
\end{align}
and the auxiliary Robin boundary problem \eqref{eq:PDE-multidim-homo} has homogeneous Robin boundary conditions. The next result establishes the relationship between the solution to the original Robin boundary problem \eqref{eq:PDE-multidim} and the solutions to two auxiliary Robin boundary problems \eqref{eq:PDE-multidimL0} and \eqref{eq:PDE-multidim-homo}.

\begin{lemma}[Decomposition-Homogenization Method]\label{lem:decomposition}
Let $\varphi$ be a classical solution to the auxiliary Robin boundary problem \eqref{eq:PDE-multidimL0}. Given this classical solution $\varphi$, let $\psi$ be a classical solution to the auxiliary Robin boundary problem \eqref{eq:PDE-multidim-homo}. Then $u=\varphi+\psi$ is a classical solution to the original Robin boundary problem \eqref{eq:PDE-multidim}.
\end{lemma}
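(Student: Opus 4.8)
The plan is a direct verification that exploits the linearity of the operators $\partial_t$, $\mathcal{L}$ and $\mathcal{L}^0$ together with the key identity \eqref{eq:differenceLL0}. Since $\varphi$ and $\psi$ are, by hypothesis, classical solutions lying in $C^{1,2}([0,T)\times\R_+^d)\cap C([0,T]\times\overline{\R}_+^d)$, their sum $u=\varphi+\psi$ inherits this regularity class automatically, so it is an admissible candidate for a classical solution to \eqref{eq:PDE-multidim}; what remains is to check that it satisfies the three relations in \eqref{eq:PDE-multidim}.

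For the interior equation, I would compute $(\partial_t+\mathcal{L})u$ term by term at a point $(t,\mathbf{x})\in[0,T)\times\R_+^d$. Writing $\mathcal{L}=\mathcal{L}^0+(\mathcal{L}-\mathcal{L}^0)$ and using \eqref{eq:differenceLL0} together with the fact that $\varphi$ solves \eqref{eq:PDE-multidimL0} gives
\[
(\partial_t+\mathcal{L})\varphi=(\partial_t+\mathcal{L}^0)\varphi+(\mathcal{L}-\mathcal{L}^0)\varphi=\rho\varphi+\frac{1}{2}\sum_{i\neq j}\Big(\sum_{k=1}^m\sigma_{ik}\sigma_{jk}\Big)\partial_{x_ix_j}^2\varphi.
\]
On the other hand, since $\psi$ solves \eqref{eq:PDE-multidim-homo},
\[
(\partial_t+\mathcal{L})\psi=\rho\psi-\frac{1}{2}\sum_{i\neq j}\Big(\sum_{k=1}^m\sigma_{ik}\sigma_{jk}\Big)\partial_{x_ix_j}^2\varphi.
\]
Adding the two displays, the mixed second-order terms in $\varphi$ cancel exactly and one is left with $(\partial_t+\mathcal{L})u=\rho(\varphi+\psi)=\rho u$ on $[0,T)\times\R_+^d$, which is the first line of \eqref{eq:PDE-multidim}.

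It then remains to match the terminal and boundary data. At $t=T$, $u(T,\mathbf{x})=\varphi(T,\mathbf{x})+\psi(T,\mathbf{x})=0+g(\mathbf{x})=g(\mathbf{x})$ for every $\mathbf{x}\in\overline{\R}_+^d$. For the $i$-th Robin condition, linearity of the trace and of $\partial_{x_i}$ on the face $\{x_i=0\}$ yields $\partial_{x_i}u(t,(\mathbf{x}^{-i},0))+c_iu(t,(\mathbf{x}^{-i},0))=[\partial_{x_i}\varphi+c_i\varphi](t,(\mathbf{x}^{-i},0))+[\partial_{x_i}\psi+c_i\psi](t,(\mathbf{x}^{-i},0))=f_i(t,\mathbf{x}^{-i})+0=f_i(t,\mathbf{x}^{-i})$, using the inhomogeneous Robin condition for $\varphi$ in \eqref{eq:PDE-multidimL0} and the homogeneous one for $\psi$ in \eqref{eq:PDE-multidim-homo}. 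Together with the regularity and the interior equation above, this shows $u=\varphi+\psi$ is a classical solution of \eqref{eq:PDE-multidim}.

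The argument is entirely algebraic and carries no genuine analytic obstacle; the only point meriting care is that the operator identity \eqref{eq:differenceLL0} and the term-by-term differentiation are invoked at interior points $(t,\mathbf{x})\in[0,T)\times\R_+^d$, where the assumed $C^{1,2}$-regularity of $\varphi$ and $\psi$ guarantees that all time and (mixed) second spatial derivatives exist and are continuous, and at the boundary faces, where continuity up to $\overline{\R}_+^d$ and the boundary conditions are part of the definition of ``classical solution'' for the two auxiliary problems. The substantive work of actually constructing classical solutions $\varphi$ and $\psi$ to the auxiliary problems \eqref{eq:PDE-multidimL0} and \eqref{eq:PDE-multidim-homo} is deferred to Section \ref{sec:Neumann-inde} and is not needed for this lemma.
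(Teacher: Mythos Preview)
Your proof is correct and follows essentially the same approach as the paper: a direct verification using linearity and the operator identity \eqref{eq:differenceLL0}. The only cosmetic difference is that the paper first computes $(\partial_t+\mathcal{L}^0)u$ and then adds back the cross-derivative correction, whereas you compute $(\partial_t+\mathcal{L})\varphi$ and $(\partial_t+\mathcal{L})\psi$ separately and add; the algebra is the same.
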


\begin{proof}
It follows from \eqref{eq:PDE-multidimL0}, \eqref{eq:PDE-multidim-homo} and \eqref{eq:differenceLL0} that, on $[0,T)\times\R_+^{d}$,
\begin{align*}
(\partial_t+\mathcal{L}^0)u&=(\partial_t+\mathcal{L}^0)\varphi+(\partial_t+\mathcal{L}^0)\psi=\rho\varphi+(\partial_t+\mathcal{L})\psi-\frac{1}{2}\sum_{i\neq j}\left(\sum_{k=1}^m\sigma_{ik}\sigma_{jk}\right)\partial_{x_ix_j}^2\psi\nonumber\\
&=\rho\varphi+\rho\psi-\frac{1}{2}\sum_{i\neq j}\left(\sum_{k=1}^m\sigma_{ik}\sigma_{jk}\right)\partial_{x_ix_j}^2\varphi-\frac{1}{2}\sum_{i\neq j}\left(\sum_{k=1}^m\sigma_{ik}\sigma_{jk}\right)\partial_{x_ix_j}^2\psi\nonumber\\
&=\rho u - \frac{1}{2}\sum_{i\neq j}\left(\sum_{k=1}^m\sigma_{ik}\sigma_{jk}\right)\partial_{x_ix_j}^2u.
\end{align*}
Then, it follows from \eqref{eq:differenceLL0} again that, on $[0,T)\times\R_+^{d}$,
\begin{align*}
(\partial_t+\mathcal{L})u=(\partial_t+\mathcal{L}^0)u+\frac{1}{2}\sum_{i\neq j}\left(\sum_{k=1}^m\sigma_{ik}\sigma_{jk}\right)\partial_{x_ix_j}^2u=\rho u.
\end{align*}
On the other hand, for all $(t,\mathbf{x}^{-i})\in[0,T)\times\overline{\R}_+^{d-1}$ with $i=1,\ldots,d$,
\begin{align*}
&\partial_{x_i}u(t,(\mathbf{x}^{-i},0))+c_i u(t,(\mathbf{x}^{-i},0))\nonumber\\
&\qquad=\partial_{x_i}\varphi(t,(\mathbf{x}^{-i},0))+c_i \varphi(t,(\mathbf{x}^{-i},0))
+\partial_{x_i}\psi(t,(\mathbf{x}^{-i},0))+c_i \psi(t,(\mathbf{x}^{-i},0))\nonumber\\
&\qquad=f_i(t,\mathbf{x}^{-i})+0=f_i(t,\mathbf{x}^{-i}).
\end{align*}
It follows that $u=\varphi+\psi$ satisfies the boundary conditions in the original Robin boundary problem \eqref{eq:PDE-multidim}. Finally, we have that, for all $\mathbf{x}\in\overline{\R}_+^{d}$,
\begin{align*}
u(T,\mathbf{x})=\varphi(T,\mathbf{x})+\psi(T,\mathbf{x})=0+g(\mathbf{x})=g(\mathbf{x}).
\end{align*}
This verifies that $u$ satisfies the terminal condition in the original Robin boundary problem \eqref{eq:PDE-multidim}. Thus, we conclude that $u=\varphi+\psi$ is a classical solution to the Robin boundary problem \eqref{eq:PDE-multidim}.
\end{proof}

In lieu of Lemma~\ref{lem:decomposition}, in order to prove the smoothness of the probabilistic representation \eqref{eq:PRprimalNeumann}, we can verify the smoothness of probabilistic representations of solutions to the auxiliary Robin boundary problems \eqref{eq:PDE-multidimL0} and \eqref{eq:PDE-multidim-homo}, respectively. We find that it is more feasible to show their smoothness by using some stochastic flow techniques, which will be elaborated in detail in the next section.

\section{Probabilistic Solutions to Auxiliary Robin Boundary Problems}\label{sec:Neumann-inde}

\subsection{Robin boundary problem \eqref{eq:PDE-multidimL0}}\label{sec:Neumann1}

This subsection focuses on the construction of the smooth probabilistic solution to the auxiliary Robin boundary problem \eqref{eq:PDE-multidimL0}. For reader's convenience, we recall the Robin boundary problem \eqref{eq:PDE-multidimL0} that
\begin{align*}
\begin{cases}
\displaystyle (\partial_t+\mathcal{L}^0)\varphi(t,\mathbf{x}) = \rho\varphi(t,\mathbf{x}),\quad \text{on}~(t,\mathbf{x})\in[0,T)\times\R_+^d,\\[0.6em]
\displaystyle \varphi(T,\mathbf{x})=0,\quad \forall \mathbf{x}\in\overline{\R}_+^d,\\[0.6em]
\displaystyle \partial_{x_i}\varphi(t,(\mathbf{x}^{-i},0))+c_i \varphi(t,(\mathbf{x}^{-i},0))=f_i(t,\mathbf{x}^{-i}),\quad \forall (t,\mathbf{x}^{-i})\in[0,T)\times\overline{\R}_+^{d-1},\\[0.4em]
\displaystyle \qquad\qquad\qquad\qquad\qquad\qquad\qquad\qquad\qquad\qquad\quad~i=1,\ldots,d.
\end{cases}
\end{align*}
We next introduce the probabilistic representation of the solution to Robin boundary problem \eqref{eq:PDE-multidimL0}, for all $(t,\mathbf{x})\in[0,T]\times\overline{\R}_+^d$,
\begin{align}\label{eq:varphisol}
\varphi(t,\mathbf{x}) &=-\sum_{i=1}^d\Ex\left[\int_t^{T} e^{-\rho(s-t)+\sum_{k=1}^d c_k\hat{L}_s^{k,t}}f_i(s,\hat{\mathbf{X}}_s^{-i,t,\mathbf{x}})d\hat{L}_s^{i,t}\right],
\end{align}
where, we recall from \eqref{eq:RSDEi} that
\begin{align*}
\hat{X}_s^{i,t,x_i}&=x_i +  \int_t^s \mu_i dr + \sum_{k=1}^m \int_t^T \sigma_{ik}dW_s^{k,i} + \hat{L}_s^{i,t}\in\overline{\R}_+.
\end{align*}
Here, $s\to\hat{L}_s^{i,t}$ (with $\hat{L}_t^{i,t}=0$) is a continuous and non-decreasing process which increases on the time set $\{s\in[t,T];~\hat{X}_s^{i,t,x_i}=0\}$ only.

A key observation to the probabilistic representation \eqref{eq:varphisol} is that the state process $\hat{\mathbf{X}}^{-i,t,\mathbf{x}}$ is in fact independent of the local time process $\hat{L}^{i,t}=(\hat{L}_s^{i,t})_{s\in[t,T]}$. Then, we have the following result whose proof is reported in Section~\ref{appendix:proof}.
\begin{lemma}\label{lem:varphiprobab0}
The probabilistic representation $\varphi(t,\mathbf{x})$ given by \eqref{eq:varphisol} can be rewritten as follows, for all $(t,\mathbf{x})\in[0,T]\times\overline{\R}_+^d$,
\begin{align*}
\varphi(t,\mathbf{x}) &=-\sum_{i=1}^d\left\{\frac{{\bf 1}_{c_i\neq 0}}{c_i}\int_t^{T} e^{-\rho(s-t)}\Ex\left[e^{\sum_{j\neq i}c_j \hat{L}_s^{j,t}}f_i(s,\hat{\mathbf{X}}_s^{-i,t,\mathbf{x}})\right]d\Ex[e^{c_i \hat{L}_s^{i,t}}]\right.\\
&\qquad\qquad\left.+{\bf 1}_{c_i=0}\int_t^{T} e^{-\rho(s-t)}\Ex\left[e^{\sum_{j\neq i}c_j \hat{L}_s^{j,t}}f_i(s,\hat{\mathbf{X}}_s^{-i,t,\mathbf{x}})\right]d\Ex[\hat{L}_s^{i,t}]\right\}.\nonumber
\end{align*}
\end{lemma}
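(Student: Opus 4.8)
\textbf{Proof proposal for Lemma~\ref{lem:varphiprobab0}.}

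The plan is to exploit the structural independence built into the representation \eqref{eq:varphisol}: by construction \eqref{eq:RSDEi}, the reflected process $\hat X^{i,t,x_i}$ is driven by the Brownian motions $W^{k,i}$, $k=1,\dots,m$, which are independent across the index $i$. Consequently the pair $(\hat X^{i,t,x_i},\hat L^{i,t})$ is independent of the vector $\hat{\mathbf X}^{-i,t,\mathbf x}$ and, in particular, the local time $\hat L^{i,t}$ is independent of the process $\bigl(e^{\sum_{j\neq i}c_j\hat L_s^{j,t}}f_i(s,\hat{\mathbf X}_s^{-i,t,\mathbf x})\bigr)_{s\in[t,T]}$. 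The first step is therefore to fix $i$, isolate the $i$-th summand of \eqref{eq:varphisol}, and apply a conditioning/Fubini argument: writing $G_s^i:=e^{-\rho(s-t)}\,e^{\sum_{j\neq i}c_j\hat L_s^{j,t}}f_i(s,\hat{\mathbf X}_s^{-i,t,\mathbf x})$, condition on the $\sigma$-algebra generated by $\{W^{k,j}:k=1,\dots,m,\;j\neq i\}$ so that $G^i$ becomes (conditionally) a deterministic path while $e^{c_i\hat L_s^{i,t}}$ and $d\hat L_s^{i,t}$ remain random but independent of it. Under Assumption \textbf{(A)} the polynomial growth of $f_i$ together with the finiteness of exponential moments of the Brownian local times (the $\hat L^{i,t}$ have Gaussian-type tails, hence all exponential moments) justifies all the interchanges of expectation and integration via dominated convergence / Fubini.

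The second step is the deterministic identity that does the real work. For a fixed (conditionally deterministic) bounded measurable path $s\mapsto g(s)$ and the continuous non-decreasing process $\ell_s:=\hat L_s^{i,t}$, one wants
\[
\Ex\!\left[\int_t^T g(s)\,e^{c_i\ell_s}\,d\ell_s\right]
=\int_t^T g(s)\,d\Ex\!\left[\tfrac{1}{c_i}e^{c_i\ell_s}\right]\quad(c_i\neq0),
\qquad
\Ex\!\left[\int_t^T g(s)\,d\ell_s\right]=\int_t^T g(s)\,d\Ex[\ell_s]\quad(c_i=0).
\]
When $c_i\neq0$ the pathwise chain rule for the continuous finite-variation process $\ell$ gives $e^{c_i\ell_s}\,d\ell_s=\tfrac{1}{c_i}\,d(e^{c_i\ell_s})$, so the inner stochastic integral is simply $\tfrac1{c_i}\int_t^T g(s)\,d(e^{c_i\ell_s})$; taking expectations and pushing $\Ex$ through the Stieltjes integral (again Fubini, using that $s\mapsto\Ex[e^{c_i\ell_s}]$ is of bounded variation because $\ell$ is non-decreasing and $e^{c_i\cdot}$ monotone, plus integrability of $\sup_{s}e^{c_i\ell_s}$) yields the claimed form with the driving measure $d\Ex[e^{c_i\hat L_s^{i,t}}]$. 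The case $c_i=0$ is the same computation with $e^{c_i\ell_s}\equiv1$, giving $d\Ex[\hat L_s^{i,t}]$ directly; this is where the indicator split ${\bf 1}_{c_i\neq0}$ versus ${\bf 1}_{c_i=0}$ comes from. The third step reassembles: put $g(s)=G_s^i$, take the outer expectation over $\{W^{k,j}:j\neq i\}$, use the independence once more to factor $\Ex[G_s^i]=e^{-\rho(s-t)}\Ex[e^{\sum_{j\neq i}c_j\hat L_s^{j,t}}f_i(s,\hat{\mathbf X}_s^{-i,t,\mathbf x})]$ out of the integral against $d\Ex[e^{c_i\hat L_s^{i,t}}]$ (valid since that measure is deterministic), and finally sum over $i=1,\dots,d$.

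The main obstacle is not any single deep step but the careful bookkeeping of integrability and the legitimacy of the three interchanges (conditional Fubini to freeze $G^i$, pushing $\Ex$ inside the pathwise Stieltjes integral against $d(e^{c_i\ell_s})$, and factoring the constant $\Ex[G_s^i]$ out of the $d\Ex[e^{c_i\hat L_s^{i,t}}]$-integral). Each requires a uniform integrable majorant: one needs $\Ex\bigl[\sup_{s\in[t,T]} e^{c_i\hat L_s^{i,t}}\cdot\bigl|\sum_{k\neq i}\cdots\bigr|\bigr]<\infty$, which follows from Hölder combined with the polynomial growth in \textbf{(A)} (controlling $f_i(s,\hat{\mathbf X}_s^{-i,t,\mathbf x})$ through moments of the reflected diffusions) and the finiteness of all exponential moments of Brownian local time; establishing these bounds cleanly, and checking that $s\mapsto\Ex[e^{c_i\hat L_s^{i,t}}]$ is right-continuous of locally bounded variation so the Stieltjes integral on the right-hand side is well defined, is the part that demands the most care. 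Once the independence is used to decouple $\hat L^{i,t}$ from $\hat{\mathbf X}^{-i,t,\mathbf x}$, the algebra is the elementary chain rule $e^{c_i\ell}d\ell=\tfrac1{c_i}d(e^{c_i\ell})$ and nothing more.
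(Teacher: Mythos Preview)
Your proposal is correct and follows essentially the same strategy as the paper: both proofs hinge on the pathwise chain rule $e^{c_i\ell}\,d\ell=\tfrac{1}{c_i}\,d(e^{c_i\ell})$ together with the independence of $\hat L^{i,t}$ from $(\hat{\mathbf X}^{-i,t,\mathbf x},\hat L^{j,t})_{j\neq i}$ to factor the expectation. The only technical difference is in how the interchange $\Ex\bigl[\int(\cdot)\,d(e^{c_i\hat L_s^{i,t}})\bigr]=\int(\cdot)\,d\Ex[e^{c_i\hat L_s^{i,t}}]$ is justified: the paper discretizes the integral into Riemann sums, applies the DCT (with an explicit polynomial-growth/local-time majorant) to swap limit and expectation, and then factors each summand using independence before passing back to the limit; you instead condition on the Brownian motions driving coordinates $j\neq i$ and invoke Fubini for the Stieltjes integral against the random monotone process $e^{c_i\hat L_s^{i,t}}$. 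Your route is slightly more streamlined and avoids the discretization bookkeeping, while the paper's Riemann-sum argument is more elementary and makes the dominating random variable completely explicit; either way the substance is the same.
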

Building upon the above probabilistic representation of $\varphi(t,\mathbf{x})$ provided in Lemma~\ref{lem:varphiprobab0}, the next lemma establishes the smoothness of $(t,\mathbf{x})\to\varphi(t,\mathbf{x})$ given by \eqref{eq:varphisol} using the stochastic flow technique.

\begin{lemma}\label{lem:regularity-varphi}
Consider the probabilistic representation of $\varphi(t,\bf{x})$ given in Lemma~\ref{lem:varphiprobab0}. Then, the probabilistic representation $\varphi\in C^{1,2}([0,T)\times\R_+^d)\cap C([0,T]\times\overline{\R}_+^d)$. Moreover, it holds that, for all $i=1,\ldots,d$,
\begin{align}\label{eq:varphi-xj}
 \partial_{x_i} \varphi(t,\bf{x})&=\Ex\left[e^{-\rho (\tau_{x_i}^i-t)+\sum_{k\neq i}c_k \hat{L}_{\tau_{x_i}^i}^{k,t}}f_i\left(\tau_{x_i}^i,\hat{\bf{X}}_{\tau_{x_i}^i}^{-i,t,\bf{x}}\right){\bf 1}_{\tau_{x_i}^i<T}\right]\nonumber\\
&\quad-\sum_{j\neq i} \mathbb{E}\left[\int_t^{\tau^i_{x_i}\wedge T}e^{-\rho (s-t)+\sum_{k\neq i}c_k \hat{L}_s^{k,t}} \partial_{x_i} f_j\left(s,\hat{\bf{X}}_s^{-j,t,\bf{x}}\right) d \hat{L}_s^{j,t}\right]\nonumber\\
&\quad+c_i\sum_{j=1}^d \mathbb{E}\left[\int_{\tau_{x_i}^i\wedge T}^{T}  e^{-\rho (s-t)+\sum_{k=1}^d c_k \hat{L}_s^{k,t}}f_j\left(s,\hat{\bf{X}}_s^{-j,t,\bf{x}}\right) d\hat{L}_s^{j,t}\right],
\end{align}
where, for $i=1,\ldots,d$, and $x\in\overline{\R}_+$, the stopping time $\tau_{x}^i$ is defined by
\begin{align}\label{tau-i}
\tau_{x}^i:=\inf\left\{s\geq t;~-\mu_i (s-t) -\sum_{k=1}^m \sigma_{ik}(W_s^{k,i}-W_t^{k,i})=x\right\}.
\end{align}
\end{lemma}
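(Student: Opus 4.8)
The plan is to exploit the key structural observation already highlighted: for each $i$, the process $\hat{\mathbf{X}}^{-i,t,\mathbf{x}}$ and the local time $\hat{L}^{i,t}$ are driven by disjoint families of independent Brownian motions $\{W^{k,j}\}_{j\neq i}$ and $\{W^{k,i}\}_k$ respectively, hence are independent. Using Lemma~\ref{lem:varphiprobab0}, each summand of $\varphi$ factors into a product of a term depending on $\mathbf{x}$ only through $\hat{\mathbf{X}}^{-i,t,\mathbf{x}}$ (and $\hat L^{j,t}$, $j\neq i$) against a deterministic Stieltjes integrator $d\Ex[e^{c_i\hat L^{i,t}_s}]$ (or $d\Ex[\hat L^{i,t}_s]$) that does not depend on $\mathbf{x}$ at all. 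So differentiating in $x_i$ only touches the first factor, while differentiating in $x_j$ for $j\neq i$ only touches that same first factor through the $j$-th coordinate. The first step is therefore to reduce the claimed formula to a statement about differentiating, in the variable $x_i$, the single-coordinate integral
\[
\int_t^T e^{-\rho(s-t)}\,\Ex\!\left[e^{\sum_{j\neq i}c_j\hat L^{j,t}_s}f_i\big(s,\hat{\mathbf{X}}^{-i,t,\mathbf{x}}_s\big)\right]\,d\nu^i_s,
\]
where $\nu^i$ is the appropriate deterministic increasing function, together with differentiating in $x_i$ the analogous $i\neq j$ terms where $x_i$ enters only through $f_j(s,\hat{\mathbf{X}}^{-j,t,\mathbf{x}}_s)$.

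The second step is the stochastic-flow analysis of $x_i\mapsto \hat X^{i,t,x_i}_s$. The one-dimensional reflected SDE \eqref{eq:RSDEi} has the explicit Skorokhod representation $\hat X^{i,t,x_i}_s = Y^{i}_s + x_i + \max\big(0,\sup_{t\le r\le s}(-Y^i_r - x_i)\big)$ where $Y^i_s := \mu_i(s-t)+\sum_k\sigma_{ik}(W^{k,i}_s-W^{k,i}_t)$ (with $Y^i_t=0$); equivalently $\hat L^{i,t}_s = (x_i + \inf_{t\le r\le s}Y^i_r)^-$. Thus the flow is piecewise in $x_i$: for $s < \tau^i_{x_i}$ (the first time $-Y^i_s$ hits $x_i$, cf. \eqref{tau-i}), we have $\partial_{x_i}\hat X^{i,t,x_i}_s = 1$ and $\partial_{x_i}\hat L^{i,t}_s = 0$; for $s \ge \tau^i_{x_i}$ (once reflection has "kicked in" and the path has coupled to the $x_i=0$ path), $\hat X^{i,t,x_i}_s$ no longer depends on $x_i$, so $\partial_{x_i}\hat X^{i,t,x_i}_s = 0$ and $\partial_{x_i}\hat L^{i,t}_s = 1$. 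This is the only genuinely delicate point: one must justify differentiating through the expectations (dominated convergence using the polynomial growth in Assumption~{\bf(A)} together with moment bounds for $\hat{\mathbf{X}}$ and $\hat L$, and the continuity of $x_i\mapsto\tau^i_{x_i}$, which holds a.s. because $Y^i$ has no intervals of constancy and $-Y^i$ crosses each level strictly). The event $\{\tau^i_{x_i}=T\}$ and the boundary term at $s=\tau^i_{x_i}$ must be handled by noting that the time-$s=T$ contribution is absent (the integrand $d\nu^i$ runs up to $T$ but there is no terminal term in \eqref{eq:varphisol}) — this produces exactly the indicator $\mathbf 1_{\tau^i_{x_i}<T}$ and the split $\int_t^{\tau^i_{x_i}\wedge T}$ versus $\int_{\tau^i_{x_i}\wedge T}^T$ in the claimed formula.

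The third step assembles the three terms. Differentiating the $i$-th summand of $\varphi$ in $x_i$: on $[t,\tau^i_{x_i}\wedge T)$ the chain rule on $f_i(s,\hat{\mathbf{X}}^{-i,t,\mathbf{x}}_s)$ contributes nothing in the $x_i$-direction (since $x_i$ is absent from $\hat{\mathbf X}^{-i}$), but reassembling the Stieltjes integral $\int d\nu^i_s = \int d\Ex[\cdots]$ back with the (now $x_i$-dependent) integrand and using $d\hat L^{i,t}_s$ being supported on $\{\hat X^{i}_s=0\}$, an integration by parts turns the $\frac1{c_i}\,d\Ex[e^{c_i\hat L^{i}_s}]$ weight back into $e^{c_i\hat L^{i,t}_s}d\hat L^{i,t}_s$; after $\tau^i_{x_i}$, $\partial_{x_i}\hat L^{i,t}_s=1$ generates the factor $c_i$ multiplying every $j$-summand, giving the third line. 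The first line comes from the boundary contribution at $s=\tau^i_{x_i}$ when $\tau^i_{x_i}<T$: at that instant the local-time integrator $d\hat L^{i,t}$ first "appears", and the jump in $\partial_{x_i}$ of the running-sup produces a Dirac-type contribution weighted by $f_i(\tau^i_{x_i},\hat{\mathbf X}^{-i,t,\mathbf x}_{\tau^i_{x_i}})$ and the discount/exponential factors evaluated at $\tau^i_{x_i}$ (note $c_i\hat L^{i,t}_{\tau^i_{x_i}}=0$, explaining why only $\sum_{k\neq i}c_k\hat L^{k,t}$ appears there). The second line is the chain-rule contribution from differentiating $f_j(s,\hat{\mathbf X}^{-j,t,\mathbf x}_s)$ in the $x_i$-slot for $j\neq i$ on the interval where that coordinate still depends on $x_i$, i.e. $s<\tau^i_{x_i}\wedge T$. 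Continuity $\varphi\in C^{1,2}([0,T)\times\R^d_+)\cap C([0,T]\times\overline{\R}^d_+)$ then follows from the representation of $\partial_{x_i}\varphi$ (and similarly higher derivatives, obtained by iterating the same flow analysis on the interior coordinates $j\neq i$, which are smooth in $x_j$ by standard non-reflected flow theory away from the boundary), combined with continuity of $(t,\mathbf x)\mapsto(\tau^i_{x_i},\hat{\mathbf X}^{-i,t,\mathbf x},\hat L^{\cdot,t})$ and dominated convergence; the polynomial growth of $\partial_{x_i}\varphi$ is inherited from Assumption~{\bf(A)} and $L^p$-bounds on the flow uniform over compacts. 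The main obstacle, as indicated, is Step~2 — rigorously differentiating through the expectation against the singular local-time integrator and correctly accounting for the boundary term at the hitting time $\tau^i_{x_i}$; the rest is bookkeeping with the chain rule and dominated convergence.
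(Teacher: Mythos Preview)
Your plan is correct and essentially mirrors the paper's own proof: the paper likewise reduces (to $d=2$ for notational ease), uses the explicit Skorokhod formula $\hat L^{i,t}_s=(\sup_{t\le r\le s}\tilde W^{i,t}_r-x_i)^+$ to take difference quotients in each $x_i$, splits at the hitting time $\tau^i_{x_i}$, extracts the boundary (``Dirac'') contribution from $d\hat L^{i,t}$ as $x_i'\downarrow x_i$, and justifies all limits by DCT via Assumption~{\bf(A)}; the second-order and time derivatives are then obtained by iterating the same difference-quotient computation on the first-derivative representation. One small slip to watch in your Step~2: after $\tau^i_{x_i}$ you have $\partial_{x_i}\hat L^{i,t}_s=-1$ (not $+1$), so the factor $+c_i$ in the third line of \eqref{eq:varphi-xj} arises only after combining this $-1$ with the chain rule on $e^{c_i\hat L^{i,t}_s}$ and the overall minus sign in the definition of $\varphi$.
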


The proof of Lemma~\ref{lem:regularity-varphi} is reported in Section~\ref{appendix:proof}.  The well-posedness of the decomposed Robin boundary problem \eqref{eq:PDE-multidimL0} is given in the next result:
\begin{proposition}\label{prop:varphisol}
Consider the probabilistic representation $\varphi(t,\bf{x})$ given by \eqref{eq:varphisol}. Then, we have
\begin{itemize}
\item[{\rm(i)}] the probabilistic representation $\varphi$ is a classical solution of the auxiliary Robin boundary problem \eqref{eq:PDE-multidimL0}.
\item[{\rm(ii)}] if the auxiliary Robin boundary problem \eqref{eq:PDE-multidimL0} has a classical solution $\varphi$ satisfying the polynomial growth condition (i.e., there exists a constant $C>0$ and $q\geq 1$ such that $|\varphi(t,\mathbf{x})|\leq C(1+|\mathbf{x}|^q)$), then the solution $\varphi$ admits the probabilistic representation given by \eqref{eq:varphisol}.
\end{itemize}
\end{proposition}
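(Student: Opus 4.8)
The plan is to establish both parts by the Feynman--Kac verification technique, applying It\^o's formula to the weighted functional $s\mapsto e^{-\rho(s-t)+\sum_{k=1}^d c_k\hat{L}_s^{k,t}}\varphi(s,\hat{\mathbf{X}}_s^{t,\mathbf{x}})$ on $[t,T]$ --- in part~(i) to the explicit representation \eqref{eq:varphisol}, whose regularity is supplied by Lemma~\ref{lem:regularity-varphi}, and in part~(ii) to an arbitrary classical solution of \eqref{eq:PDE-multidimL0} with polynomial growth. The structural feature that makes this work is that $\hat{X}^{1},\ldots,\hat{X}^{d}$ are driven by the mutually independent Brownian motions $W^{k,i}$, so that $\langle\hat{X}^{i},\hat{X}^{j}\rangle\equiv0$ for $i\neq j$; hence It\^o's formula generates exactly the operator $\mathcal{L}^0$ of \eqref{eq:Lmulti0} with no mixed second-order terms, matching the PDE in \eqref{eq:PDE-multidimL0}. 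Since $s\mapsto e^{-\rho(s-t)+\sum_{k=1}^d c_k\hat{L}_s^{k,t}}$ has finite variation, the product rule yields
\begin{align*}
d\left(e^{-\rho(s-t)+\sum_{k=1}^d c_k\hat{L}_s^{k,t}}\varphi(s,\hat{\mathbf{X}}_s^{t,\mathbf{x}})\right)
&= e^{-\rho(s-t)+\sum_{k=1}^d c_k\hat{L}_s^{k,t}}\big((\partial_s+\mathcal{L}^0-\rho)\varphi\big)(s,\hat{\mathbf{X}}_s^{t,\mathbf{x}})\,ds \\
&\quad + e^{-\rho(s-t)+\sum_{k=1}^d c_k\hat{L}_s^{k,t}}\sum_{i=1}^d\big(\partial_{x_i}\varphi+c_i\varphi\big)(s,\hat{\mathbf{X}}_s^{t,\mathbf{x}})\,d\hat{L}_s^{i,t} + dM_s,
\end{align*}
with $M$ a continuous local martingale, and since $\hat{L}^{i,t}$ increases only on $\{\hat{X}_s^{i,t,x_i}=0\}$, the $d\hat{L}^{i,t}$-term only sees the values of $\partial_{x_i}\varphi+c_i\varphi$ on the $i$-th face.

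For part~(i): Lemma~\ref{lem:regularity-varphi} already gives $\varphi\in C^{1,2}([0,T)\times\R_+^d)\cap C([0,T]\times\overline{\R}_+^d)$, and $\varphi(T,\cdot)=0$ is immediate from \eqref{eq:varphisol}. For the Robin conditions I would evaluate the gradient formula \eqref{eq:varphi-xj} at a point with $x_i=0$: by \eqref{tau-i} the stopping time $\tau_0^i$ equals $t$ almost surely, so $\tau_0^i\wedge T=t$; using $\hat{L}_t^{k,t}=0$ and $\hat{X}_t^{j,t,x_j}=x_j$, the first expectation in \eqref{eq:varphi-xj} collapses to $f_i(t,\mathbf{x}^{-i})$, the second vanishes, and the third equals $-c_i\varphi(t,(\mathbf{x}^{-i},0))$ after comparison with \eqref{eq:varphisol}, whence $\partial_{x_i}\varphi(t,(\mathbf{x}^{-i},0))+c_i\varphi(t,(\mathbf{x}^{-i},0))=f_i(t,\mathbf{x}^{-i})$. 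Feeding these boundary conditions into the display above turns the $d\hat{L}^{i,t}$-term into $f_i(s,\hat{\mathbf{X}}_s^{-i,t,\mathbf{x}})\,d\hat{L}_s^{i,t}$; integrating over $[t,T]$, taking expectations (the local martingale being a true martingale because $\varphi$ and $\partial_{x_i}\varphi$ have polynomial growth by Lemma~\ref{lem:regularity-varphi} and \eqref{eq:varphi-xj}, while $\hat{\mathbf{X}}^{t,\mathbf{x}}$ has all polynomial moments and the local times $\hat{L}^{i,t}$ have all exponential moments), and using $\varphi(T,\cdot)=0$ together with \eqref{eq:varphisol}, one gets $\Ex[\int_t^T e^{-\rho(s-t)+\sum_k c_k\hat{L}_s^{k,t}}((\partial_s+\mathcal{L}^0-\rho)\varphi)(s,\hat{\mathbf{X}}_s^{t,\mathbf{x}})\,ds]=0$ for all $(t,\mathbf{x})\in[0,T)\times\R_+^d$. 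By the Markov property the same holds with $T$ replaced by $t+h$; dividing by $h$, letting $h\downarrow0$, and invoking continuity of $(\partial_t+\mathcal{L}^0-\rho)\varphi$ on $[0,T)\times\R_+^d$ yields $(\partial_t+\mathcal{L}^0)\varphi=\rho\varphi$ pointwise, which completes (i).

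For part~(ii): if $\varphi$ is any classical solution of \eqref{eq:PDE-multidimL0} with polynomial growth, the same It\^o decomposition applies, the $ds$-term now vanishing identically by the PDE and the $d\hat{L}^{i,t}$-term equalling $f_i(s,\hat{\mathbf{X}}_s^{-i,t,\mathbf{x}})\,d\hat{L}_s^{i,t}$ by the Robin conditions, so the weighted functional is a local martingale. I would upgrade it to a true martingale by localizing at $\tau_n:=\inf\{s\ge t:\ |\hat{\mathbf{X}}_s^{t,\mathbf{x}}|\ge n\}$, taking expectations on $[t,\tau_n\wedge T]$, and letting $n\to\infty$ with the polynomial growth of $\varphi$, Assumption~{\bf(A)}, the polynomial moments of $\hat{\mathbf{X}}^{t,\mathbf{x}}$, the exponential moments of the $\hat{L}^{i,t}$, and dominated convergence. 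Since $\varphi(T,\cdot)=0$, the limit identity reads precisely $\varphi(t,\mathbf{x})=-\sum_{i=1}^d\Ex[\int_t^T e^{-\rho(s-t)+\sum_k c_k\hat{L}_s^{k,t}}f_i(s,\hat{\mathbf{X}}_s^{-i,t,\mathbf{x}})\,d\hat{L}_s^{i,t}]$, i.e. $\varphi$ coincides with \eqref{eq:varphisol}; combined with (i) this also gives uniqueness within the class of polynomially growing classical solutions.

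The step I expect to be the main obstacle is the rigorous application of It\^o's formula up to the faces and corners of $\overline{\R}_+^d$, where the solution is only known to be $C^{1,2}$ on the open orthant. I would handle this by observing that each face $\{\hat{X}^{i}_s=0\}$, hence their finite union, has zero Lebesgue occupation time for the reflected diffusion, so the $((\partial_s+\mathcal{L}^0-\rho)\varphi)\,ds$-integral does not feel the boundary, while the $d\hat{L}^{i,t}$-integral only requires $\partial_{x_i}\varphi+c_i\varphi$ to extend continuously to the $i$-th face, which holds by hypothesis; a mollification of $\varphi$ away from the faces together with the localization in $n$ then makes the argument precise. The remaining work --- the polynomial- and exponential-moment estimates for reflected Brownian motion with drift and its boundary local time, and the dominated-convergence passages --- is routine.
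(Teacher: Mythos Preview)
Your approach is correct and follows essentially the same Feynman--Kac verification as the paper: regularity from Lemma~\ref{lem:regularity-varphi}, the Robin conditions checked by evaluating the gradient formula \eqref{eq:varphi-xj} at $x_i=0$ (the paper does this for $d=2$ via \eqref{varphi-x} and \eqref{eq:m-h}), and for (ii) It\^o's formula with a spatial localization followed by dominated convergence. The one place the paper differs is in verifying the interior PDE in (i): rather than integrating up to $T$ and invoking the already-established Robin conditions on the $d\hat L^{i,t}$-terms, the paper localizes in time via $\tau_\epsilon^t=\inf\{s\ge t:\ |\tilde W_s^{i,t}|\ge\epsilon\}$ with $\epsilon<\min_i x_i$, so that on $[t,\hat t\wedge\tau_\epsilon^t]$ the process stays strictly inside $\R_+^d$, no local time accrues, and It\^o's formula only uses the interior $C^{1,2}$ regularity; the short-time limit $\hat t\downarrow t$ then gives $(\partial_t+\mathcal L^0-\rho)\varphi(t,\mathbf x)=0$ directly. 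This neatly sidesteps the boundary-It\^o issue you flag as the main obstacle; your route also works but requires the extra mollification/occupation-time argument you describe.
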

The proof of Proposition \ref{prop:varphisol} is reported in Section~\ref{appendix:proof}.

\subsection{Robin boundary problem~\eqref{eq:PDE-multidim-homo}}

Section~\ref{sec:Neumann1} has shown that the probabilistic representation $\varphi$ given by \eqref{eq:varphisol} is a classical solution to the Robin boundary problem \eqref{eq:PDE-multidimL0}. Given this classical solution $\varphi$, this section examines the construction of a smooth probabilistic solution to Robin boundary problem \eqref{eq:PDE-multidim-homo}, i.e.,
\begin{align*}
\begin{cases}
\displaystyle (\partial_t+\mathcal{L})\psi(t,\mathbf{x}) = \rho\psi(t,\mathbf{x})-\frac{1}{2}\sum_{i\neq j}\left(\sum_{k=1}^m\sigma_{ik}\sigma_{jk}\right)\partial_{x_ix_j}^2\varphi(t,\mathbf{x}),\\[0.8em]
\displaystyle\qquad\qquad\qquad\qquad\qquad\qquad\qquad \text{on}~(t,\mathbf{x})\in[0,T)\times\R_+^d,\\[0.8em]
\displaystyle \psi(T,\mathbf{x}) = g(\mathbf{x}),\quad \forall \mathbf{x}\in\overline{\R}_+^d,\\[0.6em]
\displaystyle \partial_{x_i}\psi(t,(\mathbf{x}^{-i},0))+c_i \psi(t,(\mathbf{x}^{-i},0))=0,\quad \forall (t,\mathbf{x}^{-i})\in[0,T)\times\overline{\R}_+^{d-1},\\[0.4em]
\displaystyle \qquad\qquad\qquad\qquad\qquad\qquad\qquad\qquad\qquad~i=1,\ldots,d.
\end{cases}
\end{align*}
To this end, let us introduce the following probabilistic representation that, for all $(t,\mathbf{x})\in[0,T]\times\overline{\R}_+^d$,
\begin{align}\label{eq:varphisol2}
\psi(t,\mathbf{x}) &=\frac{1}{2}\sum_{i\neq j}\Ex\left[\int_t^{T} e^{-\rho(s-t)+\sum_{k=1}^d c_k L_s^{k,t}}\left(\sum_{k=1}^m\sigma_{ik}\sigma_{jk}\right)\partial_{x_ix_j}^2\varphi(s,\mathbf{X}_s^{t,\mathbf{x}})ds\right]\nonumber\\
&\quad+\Ex\left[e^{-\rho(T-t)+\sum_{k=1}^d c_k L_T^{k,t}} g(\mathbf{X}_T^{t,\mathbf{x}})\right].
\end{align}
Here, we recall that the  $i$-th component of state process $\mathbf{X}^{t,\mathbf{x}}=(X_s^{1,t,x_1},\ldots,X_{s}^{d,t,x_d})_{s\in[t,T]}$ obeys the reflected Brownian motion with drift in \eqref{eq:RSDE0}.

From the expression of $\psi(t,{\bf x})$ in \eqref{eq:varphisol2}, we note that the integral in the first expectation in  \eqref{eq:varphisol2} is the one with respect to the Lebesgue measure ``$ds$". Thus, it becomes much easier to apply the stochastic flow argument to show the smoothness of the probabilistic representation $(t,{\bf x})\to\psi(t,{\bf x})$. On the other hand, in order to check the smoothness of $(t,{\bf x})\to\psi(t,{\bf x})$, we also need a flexible form for the second order partial derivative $\partial_{x_ix_j}^2\varphi(t,{\bf x})$. This relies on the probabilistic form \eqref{eq:varphiinde} for $\varphi(t,{\bf x})$ and Proposition \ref{prop:varphisol}. In fact, we can apply some stochastic flow arguments to obtain the  representation of $\partial_{x_ix_j}^2\varphi(t,{\bf x})$ in the next lemma, whose proof is delegated to Section~\ref{appendix:proof}.

\begin{lemma}\label{lem:varphixixj}
 Recall that the probabilistic representation $\varphi(t,\mathbf{x})$ given by \eqref{eq:varphisol} is a classical solution to the Robin boundary problem \eqref{eq:PDE-multidimL0}, which has been shown in Proposition \ref{prop:varphisol}. For any $\ell=1,\ldots,d$, let us introduce the function $\varphi_{\ell}:[0,T]\times\overline{\R}_+^2\to\R$ that, for all $(t,{\bf x})\in[0,T]\times\overline{\R}^d_+$,
\begin{align}\label{varphi-m}
\displaystyle\varphi_{\ell}(t,{\bf x}):=-\Ex\left[\int_t^{T} e^{-\rho(s-t)+\sum_{k=1}^d c_k \hat{L}_s^{k,t}}f_{\ell}(s,\hat{\mathbf{X}}_s^{-\ell,t,\mathbf{x}})d\hat{L}_s^{\ell,t}\right].
\end{align}
Then, for all $1\leq i<j\leq d$, we have
\begin{align*}
\partial_{x_ix_j}^2\varphi(t,{\bf x})=\sum_{\ell=1}^d\partial_{x_ix_j}^2 \varphi_{\ell}(t,{\bf x}).
\end{align*}
Moreover, it holds that, for $\ell=i$ or $\ell=j$,
\begin{align*}
&\partial_{x_\ell x_j}^2\varphi_\ell (t,\bf{x})\\
&=c_j\int_t^{T}\int_{0}^{\infty}\int_{-\infty}^{y_1}\cdots\int_{0}^{\infty}\int_{-\infty}^{y_{\ell-1}}\int_{0}^{\infty}\int_{-\infty}^{y_{\ell+1}}\cdots\int_{x_j}^{\infty}\int_{-\infty}^{y_j}\cdots\int_{0}^{\infty}\int_{-\infty}^{y_d}e^{-\rho(s-t)+\sum_{k\neq \ell}^d c_k (y_k-x_k)}\\
&\quad\times f_\ell(s,{\bf x}^{-\ell}-{\bf r}^{-\ell}+({\bf y}^{-\ell}-{\bf x}^{-\ell})^+)\partial_{x_\ell} h_\ell(t,s,x_\ell) \Pi_{k\neq \ell}^d\phi_k(t,s,r_k,y_k) \Pi_{k\neq \ell}(dr_kdy_k)ds\\
&\quad-\int_t^{T}\int_{0}^{\infty}\int_{-\infty}^{y_1}\cdots\int_{0}^{\infty}\int_{-\infty}^{y_{\ell-1}}\int_{0}^{\infty}\int_{-\infty}^{y_{\ell+1}}\cdots\int_0^{x_j}\int_{-\infty}^{y_j}\cdots\int_{0}^{\infty}\int_{-\infty}^{y_d}e^{-\rho(s-t)+\sum_{k\neq \ell}^d c_k (y_k-x_k)}\\
&\qquad\times \partial_{x_j}f_\ell(s,{\bf x}^{-\ell}-{\bf r}^{-\ell}+({\bf y}^{-\ell}-{\bf x}^{-\ell})^+)\partial_{x_\ell} h_\ell(t,s,x_\ell) \Pi_{k\neq \ell}^d\phi_k(t,s,r_k,y_k) \Pi_{k\neq \ell}(dr_kdy_k)ds.
\end{align*}For $\ell\neq i$ and $\ell\neq j$, we have that
\begin{align*}
&\partial_{x_ix_j}^2\varphi_\ell(t,\bf{x})\\
&=-c_ic_j\int_t^{T}\int_{0}^{\infty}\int_{-\infty}^{y_1}\cdots\int_{0}^{\infty}\int_{-\infty}^{y_{\ell-1}}\int_{0}^{\infty}\int_{-\infty}^{y_{\ell+1}}\cdots\int_{x_i}^{\infty}\int_{-\infty}^{y_i}\cdots\int_{x_j}^{\infty}\int_{-\infty}^{y_j}\cdots\int_{0}^{\infty}\int_{-\infty}^{y_d}e^{-\rho(s-t)}\\
&\times e^{\sum_{k\neq \ell}^d c_k (y_k-x_k)} f_\ell(s,{\bf x}^{-\ell}-{\bf r}^{-\ell}+({\bf y}^{-\ell}-{\bf x}^{-\ell})^+) h_\ell(t,s,x_\ell)  \Pi_{k\neq \ell}^d\phi_k(t,s,r_k,y_k) \Pi_{k\neq \ell}(dr_kdy_k)ds\\
&+c_i\int_t^{T}\int_{0}^{\infty}\int_{-\infty}^{y_1}\cdots\int_{0}^{\infty}\int_{-\infty}^{y_{\ell-1}}\int_{0}^{\infty}\int_{-\infty}^{y_{\ell+1}}\cdots\int_0^{x_i}\int_{-\infty}^{y_i}\cdots\int_{x_j}^{\infty}\int_{-\infty}^{y_j}\cdots\int_{0}^{\infty}\int_{-\infty}^{y_d}e^{-\rho(s-t)}\\
&\times e^{\sum_{k=2}^d c_k (y_k-x_k)}\partial_{x_i}f_\ell(s,{\bf x}^{-\ell}-{\bf r}^{-\ell}+({\bf y}^{-\ell}-{\bf x}^{-\ell})^+)h_\ell(t,s,x_\ell)   \Pi_{k\neq \ell}^d\phi_k(t,s,r_k,y_k) \Pi_{k\neq \ell}(dr_kdy_k)ds\\
&+c_j\int_t^{T}\int_{0}^{\infty}\int_{-\infty}^{y_1}\cdots\int_{0}^{\infty}\int_{-\infty}^{y_{\ell-1}}\int_{0}^{\infty}\int_{-\infty}^{y_{\ell+1}}\cdots\int_{x_i}^{\infty}\int_{-\infty}^{y_i}\cdots\int_0^{x_j}\int_{-\infty}^{y_j}\cdots\int_{0}^{\infty}\int_{-\infty}^{y_d}e^{-\rho(s-t)}\\
&\times e^{\sum_{k=2}^d c_k (y_k-x_k)}\partial_{x_j}f_\ell(s,{\bf x}^{-\ell}-{\bf r}^{-\ell}+({\bf y}^{-\ell}-{\bf x}^{-\ell})^+)h_\ell(t,s,x_\ell)  \Pi_{k\neq \ell}^d\phi_k(t,s,r_k,y_k) \Pi_{k\neq \ell}(dr_kdy_k)ds\\
&-\int_t^{T}\int_{0}^{\infty}\int_{-\infty}^{y_1}\cdots\int_{0}^{\infty}\int_{-\infty}^{y_{\ell-1}}\int_{0}^{\infty}\int_{-\infty}^{y_{\ell+1}}\cdots\int_0^{x_i}\int_{-\infty}^{y_i}\cdots\int_0^{x_j}\int_{-\infty}^{y_j}\cdots\int_{0}^{\infty}\int_{-\infty}^{y_d}e^{-\rho(s-t)}\\
&\times e^{\sum_{k=2}^d c_k (y_k-x_k)}\partial_{x_ix_j}f_\ell(s,{\bf x}^{-\ell}-{\bf r}^{-\ell}+({\bf y}^{-\ell}-{\bf x}^{-m})^+) h_\ell(t,s,x_\ell)  \Pi_{k\neq \ell}^d\phi_k(t,s,r_k,y_k) \Pi_{k\neq \ell}(dr_kdy_k)ds.
\end{align*}
For $\ell=1,\ldots,d$, the functions $h_\ell(t,s,x)$ and $\phi_\ell(t,s,x,y)$ are respectively given by \eqref{eq:function-h} and \eqref{eq:phii}.
\end{lemma}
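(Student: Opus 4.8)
The plan is to start from the decomposed representation $\varphi = \sum_{\ell=1}^d \varphi_\ell$ with $\varphi_\ell$ defined by \eqref{varphi-m}, which is immediate from \eqref{eq:varphisol} by linearity of the expectation; the claim $\partial_{x_ix_j}^2\varphi = \sum_\ell \partial_{x_ix_j}^2\varphi_\ell$ then follows once each $\varphi_\ell$ is shown to be $C^2$, which is part of the content of Lemma~\ref{lem:regularity-varphi} applied componentwise. So the real work is to compute $\partial_{x_ix_j}^2\varphi_\ell$ in each of the two cases ($\ell\in\{i,j\}$ versus $\ell\notin\{i,j\}$). First I would rewrite $\varphi_\ell$ in a fully explicit integrated form. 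Using the independence (noted after \eqref{eq:varphisol}) of the block $\hat{\mathbf{X}}^{-\ell,t,\mathbf{x}}$ from the local time $\hat L^{\ell,t}$, and using that each coordinate $k\neq\ell$ can be written as $\hat X_s^{k,t,x_k} = x_k - r_k + (y_k - x_k)^+$ where $r_k := -\mu_k(s-t) - \sum_{m}\sigma_{km}(W_s^{m,k}-W_t^{m,k})$ is the ``driving'' part and $y_k$ encodes the running minimum (so that $\hat L_s^{k,t} = (y_k - x_k)^+$ and the joint law of $(r_k, y_k)$ has the explicit density $\phi_k(t,s,r_k,y_k)$ supported on $\{r_k \le y_k\}$, and the factor $e^{c_k\hat L_s^{k,t}} = e^{c_k(y_k-x_k)^+}$ likewise depends on $(y_k,x_k)$ only through this), and similarly the $\ell$-th local time contributes the density $h_\ell(t,s,x_\ell)$ of $d\hat L_s^{\ell,t}$ in the time variable, I would write $\varphi_\ell(t,\mathbf{x})$ as the iterated integral appearing in the statement \emph{before} differentiation.

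Next I would differentiate under the integral sign twice in $x_i$ and $x_j$. The key structural point is that the $x$-dependence enters in exactly two places: (a) inside $f_\ell$ through the argument $\mathbf{x}^{-\ell} - \mathbf{r}^{-\ell} + (\mathbf{y}^{-\ell} - \mathbf{x}^{-\ell})^+$, and (b) in the exponential weight $e^{\sum_{k\neq\ell} c_k(y_k - x_k)^+}$ together with the domain of integration in $y_k$. The crucial identity is that $\partial_{x_k}\big[(y_k - x_k)^+\big] = -\mathbf{1}_{y_k > x_k}$, so differentiating the product $e^{c_k(y_k-x_k)^+} \cdot (\text{stuff depending on } x_k \text{ via the } +\text{-part})$ in $x_k$ splits the $y_k$-integral at $y_k = x_k$: on $\{y_k > x_k\}$ one picks up $-c_k$ from the exponential plus the inner derivative of $f_\ell$ (by the chain rule, since $\partial_{x_k}[x_k + (y_k-x_k)^+] = 1$ there, the argument of $f_\ell$ is in fact constant $= y_k$ on that region, so only the exponential contributes the $-c_k$ factor), while on $\{y_k < x_k\}$ the $+$-part vanishes and the argument of $f_\ell$ is $x_k - r_k$, so one picks up $\partial_{x_k}f_\ell$; the boundary term at $y_k = x_k$ cancels because $e^{c_k(y_k-x_k)^+}$ is continuous there. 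Iterating this for both $k=i$ and $k=j$ (when $\ell\notin\{i,j\}$) produces the four-term sum with the four sign/coefficient patterns $(-c_ic_j,\ +c_i,\ +c_j,\ -1)$ and the four pairs of $y$-ranges $(\int_{x_i}^\infty, \int_{x_j}^\infty)$, $(\int_0^{x_i},\int_{x_j}^\infty)$, etc.; when $\ell\in\{i,j\}$, say $\ell=i$, the $x_i$-derivative instead hits the $\ell$-th local-time factor $h_\ell$ (differentiation in the ``$x_\ell$'' slot of $h_\ell$, giving $\partial_{x_\ell}h_\ell$) rather than the exponential, and only the $x_j$-splitting survives, yielding the two-term formula.

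The main obstacle I anticipate is \emph{justifying} the differentiation under the integral and the term-by-term splitting rigorously: the integrand is only piecewise smooth in $x_i, x_j$ (the kink at $y_k = x_k$), so one must check that the boundary contributions genuinely cancel and that the resulting formally-differentiated integrands are dominated uniformly in $x$ on compact sets — this is where Assumption~{\bf(A)} (polynomial growth of $f_\ell$ and its first two derivatives) and the Gaussian-type tails of $\phi_k$ and $h_\ell$ are used, together with the exponential moment bounds on $\hat L^{k,t}$ that are implicitly needed for $e^{c_k \hat L_s^{k,t}}$ to be integrable (these follow from the reflection-map representation $\hat L_s^{k,t} = \sup_{t\le r\le s}(r_k(r))^+$ and Gaussian tail estimates). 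A secondary technical point is the regularity of the maps $h_\ell$ and $\phi_\ell$ near $s = t$ and near the boundary $x_\ell = 0$, which should be read off from their explicit formulas \eqref{eq:function-h} and \eqref{eq:phii}; I would handle the $x$-differentiation on $\R_+^d$ first (interior regularity), and treat continuity up to $\overline{\R}_+^d$ and the $s\to t$ integrability separately, exactly as in the proof of Lemma~\ref{lem:regularity-varphi}, on which this lemma leans.
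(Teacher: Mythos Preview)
Your approach is correct and uses the same ingredients as the paper (independence of coordinates, the explicit joint density $\phi_k$ of $(\tilde W_s^{k,t},\sup_{[t,s]}\tilde W^{k,t})$, and the time-density $h_\ell$), but you invert the order of operations. The paper first invokes the probabilistic differentiation already carried out in the proof of Lemma~\ref{lem:regularity-varphi}: it takes the stochastic-flow formulas for $\partial_{x_ix_j}\varphi_\ell$ in expectation form (involving the hitting times $\tau_{x_i}^i,\tau_{x_j}^j$ and indicators such as $\mathbf{1}_{s\ge \tau_{x_i}^i\vee\tau_{x_j}^j}$), and only \emph{then} substitutes the Skorokhod representations \eqref{eq:Xi-max}, \eqref{eq:L2-max} and the density $\phi_k$ to convert each expectation into the displayed iterated integrals. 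Your route---write $\varphi_\ell$ as an explicit iterated integral first and then differentiate analytically in $x_i,x_j$ using $\partial_{x_k}(y_k-x_k)^+=-\mathbf{1}_{y_k>x_k}$---arrives at the same four-way splitting of the $y_i,y_j$ ranges.

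What each buys: the paper's order lets it recycle the work of Lemma~\ref{lem:regularity-varphi} wholesale, so the justification of differentiability is already in hand and the present proof reduces to a density substitution. Your order is more self-contained and arguably cleaner conceptually, but you then carry the full burden of justifying differentiation under the iterated integral with a piecewise-$C^1$ integrand (the kinks at $y_k=x_k$); your sketch correctly identifies this as the main technical point and correctly notes that Assumption~{\bf(A)} plus the Gaussian tails of $\phi_k,h_\ell$ supply the required domination. One small slip: on $\{y_k>x_k\}$ you wrote $\partial_{x_k}[x_k+(y_k-x_k)^+]=1$, but in fact it equals $0$ there (which is exactly why, as you then correctly say, the $f_\ell$-argument is constant in $x_k$ on that region and only the exponential contributes $-c_k$). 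Also, $h_\ell(t,s,x_\ell)$ is not the density of $d\hat L_s^{\ell,t}$ itself but of $d\mathbb{E}[e^{c_\ell\hat L_s^{\ell,t}}]/c_\ell$ (resp.\ $d\mathbb{E}[\hat L_s^{\ell,t}]$) after the independence step of Lemma~\ref{lem:varphiprobab0}; your subsequent use of it is nonetheless correct.
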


By applying Lemma \ref{lem:varphixixj}, we first have
\begin{lemma}\label{lem:regularity-psi}
Consider the probabilistic representation of $\psi(t,\bf{x})$ given by \eqref{eq:varphisol2}. Then $\psi\in C^{1,2}([0,T)\times\R_+^d)\cap C([0,T]\times\overline{\R}_+^d)$.
\end{lemma}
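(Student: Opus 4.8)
The plan is to verify directly that the probabilistic representation $\psi(t,\mathbf{x})$ in \eqref{eq:varphisol2} is $C^{1,2}$ on $[0,T)\times\R_+^d$ and continuous up to $[0,T]\times\overline{\R}_+^d$, using the stochastic flow technique. The crucial structural simplification, already emphasized in the text, is that the first integral in \eqref{eq:varphisol2} is an integral against Lebesgue measure $ds$ rather than against a local time process, so that differentiating under the expectation and integral signs only requires control of $\partial_{x_k}\mathbf{X}_s^{t,\mathbf{x}}$, $\partial^2_{x_kx_\ell}\mathbf{X}_s^{t,\mathbf{x}}$, $\partial_{x_k}L_s^{j,t}$ and $\partial^2_{x_kx_\ell}L_s^{j,t}$, together with the regularity of the integrand $(s,\mathbf{y})\mapsto\partial^2_{x_ix_j}\varphi(s,\mathbf{y})$. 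First I would invoke the standard stochastic flow results for reflected SDEs on the orthant (Skorokhod map, and the $L^p$-bounds and differentiability of $\mathbf{x}\mapsto(\mathbf{X}^{t,\mathbf{x}}_s,L^{t}_s)$; see the references used elsewhere in the paper) to record: (a) $\mathbf{x}\mapsto \mathbf{X}_s^{t,\mathbf{x}}$ and $\mathbf{x}\mapsto L_s^{i,t}$ are a.s.\ continuously differentiable, with moment bounds for the first and second derivatives uniform in $s\in[t,T]$, and (b) continuity of these flows jointly in $(t,\mathbf{x})$, plus the polynomial growth of all quantities coming from Assumption {\bf(A)} and the polynomial growth of $\varphi$ established in Proposition~\ref{prop:varphisol}.

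Next I would establish the needed regularity of the integrand. By Proposition~\ref{prop:varphisol}, $\varphi\in C^{1,2}([0,T)\times\R_+^d)$, and Lemma~\ref{lem:varphixixj} gives explicit integral formulas for $\partial^2_{x_ix_j}\varphi$; from those formulas one reads off that $(s,\mathbf{x})\mapsto\partial^2_{x_ix_j}\varphi(s,\mathbf{x})$ is itself $C^{1}$ in $\mathbf{x}$ (and continuous in $s$) on $[0,T)\times\R_+^d$ with at most polynomial growth, because the kernels $h_\ell$, $\phi_\ell$ in \eqref{eq:function-h}–\eqref{eq:phii} are smooth Gaussian-type densities with Gaussian tails in the space variables, while the $f_\ell$-factors are $C^{1,2}$ with polynomial growth. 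This is what lets $\partial^2_{x_ix_j}\varphi(s,\mathbf{X}_s^{t,\mathbf{x}})$ be differentiated in $\mathbf{x}$ via the chain rule. One subtlety near the lateral boundary (where a component of $\mathbf{X}_s^{t,\mathbf{x}}$ hits $0$ and the pieces of the formulas in Lemma~\ref{lem:varphixixj} switch between the $\int_0^{x_j}$ and $\int_{x_j}^\infty$ regions) must be handled: the matching across these regions is continuous, and since we only differentiate once more in $\mathbf{x}$ after composing with the flow, the resulting bounds remain integrable.

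Then I would carry out the differentiation itself in two rounds. For the first derivative $\partial_{x_k}\psi$: differentiate each term of \eqref{eq:varphisol2} under $\Ex$, justifying the interchange by dominated convergence using the moment bounds on $\partial_{x_k}\mathbf{X}^{t,\mathbf{x}}_s$, $\partial_{x_k}L^{j,t}_s$, the polynomial-growth bounds on $\partial_{x_k}\bigl(\partial^2_{x_ix_j}\varphi\bigr)$ and $\nabla g$, and the fact that $e^{-\rho(s-t)+\sum_k c_k L^{k,t}_s}$ has all moments (local time has exponential moments). For the second derivative $\partial^2_{x_kx_\ell}\psi$: differentiate once more, now picking up $\partial^2\mathbf{X}^{t,\mathbf{x}}$, $\partial^2 L^{j,t}$, products of first derivatives, and $\partial^2_{x_ix_j}\bigl(\partial^2_{x_ix_j}\varphi\bigr)$ — wait, only \emph{one} further spatial derivative of $\partial^2_{x_ix_j}\varphi$ is needed, which is exactly the $C^1$-regularity of $\partial^2\varphi$ recorded above — again justified by $L^p$-dominated convergence. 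The resulting expressions are continuous in $(t,\mathbf{x})$ on $[0,T)\times\R_+^d$ by continuity of the flows and dominated convergence, giving $\psi\in C^{1,2}$; continuity on the closed set $[0,T]\times\overline{\R}_+^d$, including the terminal slice, follows from continuity of $(t,\mathbf{x})\mapsto(\mathbf{X}^{t,\mathbf{x}},L^t)$ up to $t=T$ together with the terminal datum $g\in C(\overline{\R}_+^d)$. The main obstacle I expect is precisely the regularity-and-growth control of the composed integrand $\partial^2_{x_ix_j}\varphi(s,\mathbf{X}^{t,\mathbf{x}}_s)$ near the boundary: one must verify from the Lemma~\ref{lem:varphixixj} formulas that $\partial^2\varphi$ is $C^1$ in $\mathbf{x}$ with polynomial growth uniformly up to (the interior approach of) the boundary, so that the flow derivatives $\partial L^{j,t}_s$, which are not bounded but only $L^p$, can be absorbed — handling the switching of integration regions and the exponential-in-local-time weight simultaneously is the delicate bookkeeping step.
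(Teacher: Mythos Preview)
Your overall plan---use the Lebesgue-$ds$ structure of \eqref{eq:varphisol2} and the regularity of $\partial^2_{x_ix_j}\varphi$ from Lemma~\ref{lem:varphixixj}, then differentiate under the expectation---is aligned with what the paper intends (it says the proof is ``similar to Lemma~\ref{lem:regularity-varphi}'' combined with Lemma~\ref{lem:varphixixj}). However, there is a genuine gap in the way you propose to carry out the second-derivative step.

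You assert that $\mathbf{x}\mapsto (\mathbf{X}^{t,\mathbf{x}}_s,L^{t}_s)$ is twice differentiable with $L^p$ moment bounds on the second derivatives, and that the second derivatives $\partial^2_{x_k x_\ell}\psi$ are obtained ``picking up $\partial^2\mathbf{X}^{t,\mathbf{x}}$, $\partial^2 L^{j,t}$''. For reflected Brownian motion on $\overline{\R}_+$ this is false at the pathwise level: from the Skorokhod representation one has $L_s^{\ell,t}=(\sup_{r\in[t,s]}\tilde W^{\ell,t}_r - x_\ell)^+$ and $X_s^{\ell,t,x_\ell}=x_\ell-\tilde W_s^{\ell,t}+L_s^{\ell,t}$, so that
\[
\partial_{x_\ell}L_s^{\ell,t}=-\mathbf 1_{\{\tau^\ell_{x_\ell}<s\}},\qquad
\partial_{x_\ell}X_s^{\ell,t,x_\ell}=\mathbf 1_{\{\tau^\ell_{x_\ell}\ge s\}},
\]
where $\tau^\ell_{x_\ell}$ is the hitting time \eqref{tau-i}. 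These first derivatives are \emph{bounded} indicators, not merely $L^p$; but the \emph{second} derivatives $\partial^2_{x_\ell x_\ell}L_s^{\ell,t}$ and $\partial^2_{x_\ell x_\ell}X_s^{\ell,t,x_\ell}$ do not exist as random variables---they are Dirac-type masses concentrated on the random set $\{s=\tau^\ell_{x_\ell}\}$. Hence the Leibniz/chain-rule scheme you describe for the pure second derivatives $\partial^2_{x_\ell x_\ell}\psi$ cannot be justified by dominated convergence with ``moment bounds for the second derivatives of the flow''.

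The paper's route (illustrated in detail in the proof of Lemma~\ref{lem:regularity-varphi}) avoids this by working directly with difference quotients and the explicit joint density $\phi_\ell(t,s,r,y)$ of $(\tilde W^{\ell,t}_s,\sup_{r\le s}\tilde W^{\ell,t}_r)$: one splits the increment as in \eqref{eq:Deltan} into a ``boundary'' piece $\Delta_n^{(1)}$ coming from the variation of the hitting time $\tau^\ell_{x_\ell^{(n)}}\to \tau^\ell_{x_\ell}$ and interior pieces $\Delta_n^{(2)},\Delta_n^{(3)}$, and identifies each limit via the density. It is precisely this splitting that produces the extra ``density'' contribution which your naive second differentiation misses. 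To repair your argument you should replace the appeal to $\partial^2 L$, $\partial^2 X$ by the same difference-quotient computation as in Lemma~\ref{lem:regularity-varphi}, now with the role of $f_i$ played by $G:=\tfrac12\sum_{i\neq j}(\sum_k\sigma_{ik}\sigma_{jk})\partial^2_{x_ix_j}\varphi$; the formulas of Lemma~\ref{lem:varphixixj} supply the smoothness and growth control on $G$ that this computation requires.
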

The proof of Lemma~\ref{lem:regularity-psi} is similar to the one of Lemma \ref{lem:regularity-varphi} by combining Lemma~\ref{lem:varphixixj}
and stochastic flow techniques. We hence omit the proof of Lemma~\ref{lem:regularity-psi}. Building upon Lemma \ref{lem:regularity-psi}, we now provide the well-posedness of the Robin boundary problem \eqref{eq:PDE-multidim-homo} in the next result, whose proof is similar to that of Proposition \ref{prop:varphisol}.
\begin{proposition}\label{prop:varphisol2}
Consider the probabilistic representation $\psi(t,\bf{x})$ given by \eqref{eq:varphisol2}. Then, we have
\begin{itemize}
\item[{\rm(i)}] the probabilistic representation $\psi$ is a classical solution of the auxiliary Robin boundary problem \eqref{eq:PDE-multidim-homo}.
\item[{\rm(ii)}] if the auxiliary Robin boundary problem \eqref{eq:PDE-multidim-homo} has a classical solution $\psi$ satisfying the polynomial growth condition (i.e.,  there exists a constant $C>0$ and $q\geq 1$ such that $|\psi(t,\mathbf{x})|\leq C(1+|\mathbf{x}|^q)$), then the solution $\psi$ admits the probabilistic representation given by \eqref{eq:varphisol2}.
\end{itemize}
\end{proposition}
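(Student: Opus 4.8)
The plan is to mirror the proof of Proposition~\ref{prop:varphisol}, with Lemma~\ref{lem:regularity-psi} supplying the regularity and Lemma~\ref{lem:varphixixj} supplying control of the source term $F(t,\mathbf{x}):=\frac12\sum_{i\neq j}\bigl(\sum_{k=1}^m\sigma_{ik}\sigma_{jk}\bigr)\partial_{x_ix_j}^2\varphi(t,\mathbf{x})$, so that the equation in \eqref{eq:PDE-multidim-homo} reads $(\partial_t+\mathcal{L})\psi-\rho\psi+F=0$; throughout write $\mathcal{E}_s:=\exp\bigl(-\rho(s-t)+\sum_{k=1}^d c_kL_s^{k,t}\bigr)$. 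By Lemma~\ref{lem:regularity-psi} the function $\psi$ of \eqref{eq:varphisol2} lies in $C^{1,2}([0,T)\times\R_+^d)\cap C([0,T]\times\overline{\R}_+^d)$, and $\psi(T,\cdot)=g$ is read off directly from \eqref{eq:varphisol2} (the $ds$-integral is empty, $\mathbf{X}_T^{T,\mathbf{x}}=\mathbf{x}$, $L_T^{k,T}=0$). So for (i) it remains to check the interior equation and the homogeneous Robin condition, and for (ii) the representation of an arbitrary polynomially growing classical solution.

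For the interior equation, using the flow property of \eqref{eq:RSDE0} and the additivity $L_s^{k,t}=L_\theta^{k,t}+L_s^{k,\theta}$ one shows that, for $\psi$ as in \eqref{eq:varphisol2}, the process $M_s:=\int_t^s\mathcal{E}_rF(r,\mathbf{X}_r^{t,\mathbf{x}})\,dr+\mathcal{E}_s\psi(s,\mathbf{X}_s^{t,\mathbf{x}})$ is a (local) martingale. On the other hand, for $\mathbf{x}\in\R_+^d$ and up to the exit time $\theta_\delta$ of $\mathbf{X}^{t,\mathbf{x}}$ from a small ball around $\mathbf{x}$ contained in $\R_+^d$, the It\^{o} formula applied to $s\mapsto\mathcal{E}_s\psi(s,\mathbf{X}_s^{t,\mathbf{x}})$ (valid since $\psi\in C^{1,2}$ there and no local time accrues before $\theta_\delta$) shows that $M^{\theta_\delta}$ has finite-variation part $\int_t^{s\wedge\theta_\delta}\mathcal{E}_r\bigl[(\partial_r+\mathcal{L})\psi-\rho\psi+F\bigr](r,\mathbf{X}_r^{t,\mathbf{x}})\,dr$; a continuous local martingale of finite variation being constant, this integral vanishes identically, and letting $\delta\downarrow0$ with continuity gives $(\partial_t+\mathcal{L})\psi(t,\mathbf{x})-\rho\psi(t,\mathbf{x})+F(t,\mathbf{x})=0$, the interior equation of \eqref{eq:PDE-multidim-homo}.

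For the boundary condition I would differentiate \eqref{eq:varphisol2} in $x_i$ by the stochastic flow method, exactly as in the derivation of \eqref{eq:varphi-xj} in Lemma~\ref{lem:regularity-varphi}: perturbing $x_i$ affects only the $i$-th coordinate of $\mathbf{X}^{t,\mathbf{x}}$ through the Skorokhod map, with $\partial_{x_i}L_s^{i,t}=-\mathbf{1}_{\{\tau_{x_i}^i\le s\}}$ and $\partial_{x_i}X_s^{i,t,x_i}=\mathbf{1}_{\{\tau_{x_i}^i>s\}}$ for the (analogue of the) stopping time \eqref{tau-i}, which gives an explicit formula for $\partial_{x_i}\psi(t,\mathbf{x})$. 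Evaluating at $x_i=0$ forces $\tau_{0}^i=t$, hence $\partial_{x_i}L_s^{i,t}\equiv-1$ and $\partial_{x_i}X_s^{i,t,x_i}\equiv0$ on $[t,T]$, so every term carrying the factor $\partial_{x_i}X^{i,t,x_i}$ drops and the formula collapses to $\partial_{x_i}\psi(t,(\mathbf{x}^{-i},0))=-c_i\psi(t,(\mathbf{x}^{-i},0))$; this is the homogeneous Robin condition, completing (i). For (ii), given a classical solution $\psi$ of \eqref{eq:PDE-multidim-homo} with polynomial growth, I apply the It\^{o}--Tanaka formula to $s\mapsto\mathcal{E}_s\psi(s,\mathbf{X}_s^{t,\mathbf{x}})$ on $[t,T-\varepsilon]$, use the PDE to replace the drift by $-\mathcal{E}_sF(s,\mathbf{X}_s^{t,\mathbf{x}})\,ds$ and the homogeneous boundary condition to annihilate the $dL^{i,t}$-terms (which live on $\{X_s^{i,t,x_i}=0\}$, where $\partial_{x_i}\psi+c_i\psi=0$), take expectations so that the stochastic integral drops, and let $\varepsilon\downarrow0$ using $\psi(T,\cdot)=g$, continuity of $\psi$ up to $T$, and dominated convergence; the outcome is precisely \eqref{eq:varphisol2}, whence also uniqueness within the polynomial-growth class.

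The main obstacle I anticipate is the integrability bookkeeping underpinning every expectation and limit above --- the martingale property of $M$ and of the It\^{o} stochastic integral, the differentiation under the expectation in the flow argument, and the passage $\varepsilon\downarrow0$. Each requires matching the polynomial growth of $\psi$, $\partial_{x_i}\psi$ and of $F$ (hence of $\partial_{x_ix_j}^2\varphi$, furnished by Assumption~{\bf(A)} with Lemmas~\ref{lem:regularity-psi} and~\ref{lem:varphixixj}) against moment bounds for the reflected state process, namely $\Ex[\sup_{s\in[t,T]}|\mathbf{X}_s^{t,\mathbf{x}}|^p]<\infty$ and $\Ex[e^{pL_T^{i,t}}]<\infty$ for all $p\ge1$; the exponential moments hold because each $L_T^{i,t}$ is, through the Skorokhod map, dominated by the running maximum on $[t,T]$ of a Brownian motion with drift, which has Gaussian tails. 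Modulo these estimates the argument is a transcription of the proof of Proposition~\ref{prop:varphisol}, with $\mathcal{L}^0$ replaced by $\mathcal{L}$ and the boundary-data local-time integral of \eqref{eq:varphisol} replaced by the Lebesgue integral of $F$ in \eqref{eq:varphisol2}.
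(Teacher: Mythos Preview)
Your proposal is correct and follows essentially the same route as the paper, which in fact does not spell out a separate proof but simply declares that the argument ``is similar to that of Proposition~\ref{prop:varphisol}.'' Your substitution of $\mathcal{L}$ for $\mathcal{L}^0$, of the Lebesgue integral of $F$ for the local-time integral of $f_i$, and your verification of the homogeneous Robin condition via the flow derivative at $x_i=0$ (where $\tau_0^i=t$) are exactly the adaptations the paper has in mind; the integrability caveats you flag (polynomial growth of $F$ from Lemma~\ref{lem:varphixixj}, exponential moments of $L_T^{i,t}$) are likewise implicit in the paper's treatment.
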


We are now ready to prove Theorem \ref{thm:PDE-sol0}.
\begin{proof}[Proof of Theorem \ref{thm:PDE-sol0}]
By applying Lemma \ref{lem:decomposition}, Proposition \ref{prop:varphisol} and Proposition \ref{prop:varphisol2}, we have that, for all $(t,\mathbf{x})\in[0,T]\times\overline{\R}_+^d$, $\tilde{u}(t,\mathbf{x})=\varphi(t,\mathbf{x})+\psi(t,\mathbf{x})$ solves the Robin boundary problem \eqref{eq:PDE-multidim}, which provides the existence of a classical solution to the Robin boundary problem \eqref{eq:PDE-multidim}. On the other hand, assuming that $u\in C^{1,2}([0,T)\times \R_+^d)\cap C([0,T]\times \overline{\R}_+^d)$ with the polynomial growth condition is a classical solution of the Robin boundary problem \eqref{eq:PDE-multidim}, by a similar argument in the proof of Proposition \ref{prop:varphisol}-(ii), we can deduce that $u$ admits the probabilistic representation \eqref{eq:PRprimalNeumann}. This also yields the uniqueness of the classical solution of the Robin boundary problem \eqref{eq:PDE-multidim}. Finally, noting that $\tilde{u}(t,\mathbf{x})=\varphi(t,\mathbf{x})+\psi(t,\mathbf{x})$ is also a classical solution to the Robin boundary problem \eqref{eq:PDE-multidim}, by the uniqueness of the classical solution, we deduce that $u$ satisfies $u(t,\mathbf{x})=\tilde{u}(t,\mathbf{x})=\varphi(t,\mathbf{x})+\psi(t,\mathbf{x})$ for all $(t,\mathbf{x})\in[0,T]\times\overline{\R}_+^d$, which implies the equivalence between the probabilistic representations \eqref{eq:PRprimalNeumann} and \eqref{eq:thm-PRprimalNeumann}. Thus, we complete the proof.
\end{proof}

\begin{remark}
 In view of Lemma~\ref{lem:decomposition}, Proposition \ref{prop:varphisol} and  \ref{prop:varphisol2}, our decomposition-homogenization technique for the Robin boundary problem \eqref{eq:PDE-multidim} can be also applied to the following Robin boundary problem of elliptic type:
\begin{align}\label{eq:ellipticPDE-multidim}
\begin{cases}
\displaystyle \mathcal{L}u(\mathbf{x}) = \rho u(\mathbf{x}),\quad \text{on}~\mathbf{x}\in\R_+^d,\\[0.6em]
\displaystyle \partial_{x_i}u(\mathbf{x}^{-i},0)+c_i u(\mathbf{x}^{-i},0)=f_i(\mathbf{x}^{-i}),\quad \forall \mathbf{x}^{-i}\in\overline{\R}_+^{d-1},~i=1,\ldots,d,
\end{cases}
\end{align}
where $\rho>0$, $c_1,...,c_d$ are non-positive constants and $f,\partial_{x_j}f$, $\partial_{x_jx_k}^2f$ are bounded for  $j,k=1,\ldots,d$. The Robin boundary problem \eqref{eq:ellipticPDE-multidim} has a unique classical solution $u\in C^2(\R_+^d)\cap C(\overline{\R}_+^d)$ satisfying the polynomial growth condition (i.e., there exists a constant $C>0$ and $q\geq 1$ such that $|u(\mathbf{x})|\leq C(1+|\mathbf{x}|^q)$). Moreover, this classical solution $u(\mathbf{x})$ admits the following probabilistic representation given by, for $\mathbf{x}\in\overline{\R}_+^d$,
\begin{align}\label{eq:thm-PRprimalNeumann-elliptic}
u(\mathbf{x}) &= \varphi(\mathbf{x})+\frac{1}{2}\sum_{i\neq j}\Ex\left[\int_0^{\infty} e^{-\rho s+\sum_{k=1}^d c_k L_s^{k,0}}\left(\sum_{k=1}^m\sigma_{ik}\sigma_{jk}\right)\partial_{x_ix_j}^2\varphi(\mathbf{X}_s^{0,\mathbf{x}})ds\right].
\end{align}
Here, the function $\varphi\in C^2(\R_+^d)\cap C(\overline{\R}_+^d)$ admits the following probabilistic representation:
\begin{align*}
\varphi(\mathbf{x}) &=-\sum_{i=1}^d\left\{{\bf 1}_{c_i\neq0}\frac{1}{c_i}\int_0^{\infty} e^{-\rho s}\Ex\left[e^{\sum_{j\neq i}c_j \hat{L}_s^{j,0}} f_i(\hat{\mathbf{X}}_s^{-i,0,\mathbf{x}})\right]d\Ex[e^{c_i \hat{L}_s^{i,0}}]\right.\\
&\qquad\left.+{\bf 1}_{c_i=0}\int_0^{\infty} e^{-\rho s}\Ex\left[e^{\sum_{j\neq i}c_j \hat{L}_s^{j,0}} f_i(\hat{\mathbf{X}}_s^{-i,0,\mathbf{x}})\right]d\Ex[\hat{L}_s^{i,0}]\right\}.
\end{align*}
Our decomposition-homogenization method can be also directly applicable to the Neumann boundary problem, i.e., $c_1=\cdots=c_d=0$ in \eqref{eq:PDE-multidim}. The solution of Neumann boundary problem then admits the following probabilistic representation:
\begin{align}\label{eq:Neumann-integral-local}
u(t,\mathbf{x}) &= \varphi(t,\mathbf{x})+\frac{1}{2}\sum_{i\neq j}\Ex\left[\int_t^{T} e^{-\rho (s-t)}\left(\sum_{k=1}^m\sigma_{ik}\sigma_{jk}\right)\partial_{x_ix_j}^2\varphi(s,\mathbf{X}_s^{t,\mathbf{x}})ds\right]\nonumber\\
&\quad+\Ex\left[e^{-\rho (T-t)} g(\mathbf{X}_T^{t,\mathbf{x}})\right],
\end{align}
where $\varphi \in C^{1,2}([0,T)\times\R_+^d)\cap C([0,T]\times\overline{\R}_+^d)$ admits the form given by
\begin{align}\label{eq:burvarphi}
\varphi(t,\mathbf{x}) &=-\sum_{i=1}^d\int_t^{T} e^{-\rho (s-t)}\Ex[ f_i(s,\hat{\mathbf{X}}_s^{-i,t,\mathbf{x}})]d\Ex[\hat{L}_s^{i,t}].
\end{align}
Similar arguments can be applied to handle the Neumann boundary problem of the elliptic type.
\end{remark}

\section{Proofs of Auxiliary Results}\label{appendix:proof}

This section collects the proofs of all auxiliary results in previous sections.

\begin{proof}[Proof of Lemma~\ref{lem:varphiprobab0}]
It follows from \eqref{eq:varphisol} that
\begin{align}\label{eq:varphisol-inter}
\varphi(t,\mathbf{x}) &=-\sum_{i=1}^d\Ex\left[\int_t^{T} e^{-\rho(s-t)+\sum_{k=1}^d c_k \hat{L}_s^{k,t}}f_i(s,\hat{\mathbf{X}}_s^{-i,t,\mathbf{x}})d\hat{L}_s^{i,t}\right]\nonumber\\
&=-\sum_{i=1}^d \left\{{\bf 1}_{c_i\neq 0}\frac{1}{c_i} \Ex\left[\int_t^{T} e^{-\rho(s-t)+\sum_{j\neq i} c_j \hat{L}_s^{j,t}}f_i(s,\hat{\mathbf{X}}_s^{-i,t,\mathbf{x}})d(e^{c_i\hat{L}_s^{i,t}})\right]\right.\nonumber\\
&\quad\left.+{\bf 1}_{c_i=0} \Ex\left[\int_t^{T} e^{-\rho(s-t)+\sum_{j\neq i} c_j \hat{L}_s^{j,t}}f_i(s,\hat{\mathbf{X}}_s^{-i,t,\mathbf{x}})d\hat{L}_s^{i,t}\right]\right\}.
\end{align}
We only prove the case $c_i\neq 0$, since the other case with $c_i=0$ can be tackled in a similar fashion. Fix $(t,\mathbf{x})\in[0,T]\times\overline{\R}_+^{d}$. Let $s_i:=t+\frac{T-t}{n}i$ for $i=0,1,\ldots,n$ with $n\in\mathbb{N}$. Then, it holds that, for $i=1,\ldots,d$,
\begin{align}\label{eq:sumf}
&\Ex\left[\int_t^{T} e^{-\rho(s-t)+\sum_{j\neq i} c_j \hat{L}_s^{j,t}}f_i(s,\hat{\mathbf{X}}_s^{-i,t,\mathbf{x}})d\left(e^{c_i \hat{L}_s^{i,t}}\right)\right]\\
&\qquad=\Ex\left[\lim_{n\to \infty} \sum_{k=1}^n e^{-\rho (s_{k}-t)+\sum_{j\neq i} c_j \hat{L}_{s_k}^{j,t}}f_i(s_k,\hat{\mathbf{X}}_{s_k}^{-i,t,\mathbf{x}})\left(e^{c_i \hat{L}^{i,t}_{s_k}}-e^{c_i \hat{L}^{i,t}_{s_{k-1}}}\right)\right].\nonumber
\end{align}
In the sequel, let $C>0$ be a generic constant which may be different from line to line. By using Assumption $\bf{(A)}$, we have, for all $i=1,\ldots,d$, $\mathbb{P}$-a.s.
\begin{align*}
\left|f_i(s,\hat{\mathbf{X}}_s^{-i,t,\mathbf{x}})\right|\leq C(1+|\hat{\mathbf{X}}^{t,\mathbf{x}}_s|^q)\leq C\left(1+\sup_{r\in[t,T]}|\hat{\mathbf{X}}^{t,\mathbf{x}}_r|^q\right),\quad \forall s\in[t,T].
\end{align*}
Note that $s\to \hat{L}_s^{i,t}$ is a continuous and non-decreasing process. As a result, $\mathbb{P}$-a.s.
\begin{align*}
&\sum_{k=1}^n e^{-\rho(s_{k}-t)+\sum_{j\neq i} c_j \hat{L}_{s_k}^{j,t}}f_i(s_k,\hat{\mathbf{X}}_{s_k}^{-i,t,\mathbf{x}})\left(e^{c_i \hat{L}^{i,t}_{s_k}}-e^{c_i \hat{L}^{i,t}_{s_{k-1}}}\right)\nonumber\\
&\qquad\qquad\leq  C e^{\sum_{j\neq i} |c_j| \hat{L}_{T}^{j,t}}\left(1+\sup_{r\in[t,T]}|\hat{\mathbf{X}}^{t,\mathbf{x}}_r|^q\right)\sum_{k=1}^n\left(e^{c_i \hat{L}^{i,t}_{s_k}}-e^{c_i \hat{L}^{i,t}_{s_{k-1}}}\right)\nonumber\\
&\qquad\qquad=Ce^{\sum_{j\neq i} |c_j| \hat{L}_T^{j,t}}\left(1+\sup_{r\in[t,T]}|\hat{\mathbf{X}}^{t,\mathbf{x}}_r|^q\right)\hat{L}_T^{i,t}\\
&\qquad\qquad \leq Ce^{3\sum_{j\neq i} |c_j| \hat{L}_T^{j,t}}+C\left(1+\sup_{r\in[t,T]}|\hat{\mathbf{X}}^{t,\mathbf{x}}_s|^q\right)^3+C(\hat{L}_T^{i,t})^3.
\end{align*}
Using the above estimate, we can apply the DCT, the equality \eqref{eq:sumf} and the independence between the reflected state process $\hat{\mathbf{X}}^{-i,t,\mathbf{x}}$ and the local time process $\hat{L}^{i,t}$ to conclude that
\begin{align}\label{eq:m-ex-T}
&\Ex\left[\int_t^{T} e^{-\rho(s-t)+\sum_{j\neq i} c_j \hat{L}_s^{j,t}}f_i(s,\hat{\mathbf{X}}_s^{-i,t,\mathbf{x}})d\left(e^{c_i \hat{L}_s^{i,t}}\right)\right]\nonumber\\
&\quad=\Ex\left[ \lim_{n\to \infty}\sum_{k=1}^n e^{-\rho(s_{k}-t)+\sum_{j\neq i} c_j \hat{L}_{s_k}^{j,t}}f_i(s_k,\hat{\mathbf{X}}_{s_k}^{-i,t,\mathbf{x}})\left(e^{c_i \hat{L}^{i,t}_{s_k}}-e^{c_i \hat{L}^{i,t}_{s_{k-1}}}\right)\right]\nonumber\\
&\quad=\lim_{n\to \infty}\Ex\left[ \sum_{k=1}^n e^{-\rho(s_{k}-t)+\sum_{j\neq i} c_j \hat{L}_{s_k}^{j,t}}f_i(s_k,\hat{\mathbf{X}}_{s_k}^{-i,t,\mathbf{x}})\left(e^{c_i \hat{L}^{i,t}_{s_k}}-e^{c_i \hat{L}^{i,t}_{s_{k-1}}}\right)\right]\nonumber\\
&\quad=\lim_{n\to \infty}\sum_{k=1}^n e^{-\rho(s_{k}-t)} \Ex\left[e^{\sum_{j\neq i} c_j \hat{L}_s^{j,t}}f_i(s_k,\hat{\mathbf{X}}_{s_k}^{-i,t,\mathbf{x}})\right]\left\{\Ex[e^{c_i \hat{L}^{i,t}_{s_k}}]-\Ex[e^{c_i \hat{L}^{i,t}_{s_{k-1}}}]\right\}\nonumber\\
&\quad=\int_t^Te^{-\rho(s-t)}\Ex\left[e^{\sum_{j\neq i} c_j \hat{L}_s^{j,t}}f_i(s,\hat{\mathbf{X}}_{s}^{-i,t,\mathbf{x}})\right]d\Ex[e^{c_i \hat{L}_s^{i,t}}].
\end{align}
Consequently, we conclude from \eqref{eq:varphisol} and \eqref{eq:m-ex-T} that the equality \eqref{eq:varphiinde} holds on $(t,\mathbf{x})\in[0,T]\times\overline{\R}_+^d$. Thus, the proof of the lemma is completed.
\end{proof}

\begin{proof}[Proof of Lemma~\ref{lem:regularity-varphi}]
Without loss of generality, we only provide the proof for the dimension $d=2$ because the proof of the general case with $d>2$ is essentially the same. For $i=1,2$, let us introduce the function $\varphi_i:[0,T]\times\overline{\R}_+^2\to\R$ as follows, for all $(t,x_1,x_2)\in[0,T]\times\overline{\R}^2_+$,
\begin{align}\label{varphi-i}
\varphi_i(t,x_1,x_2):=-\Ex\left[\int_t^{T} e^{-\rho(s-t)+\sum_{k=1}^2 c_k \hat{L}_s^{k,t}}f_i(s,\hat{\mathbf{X}}_s^{-i,t,\mathbf{x}})d\hat{L}_s^{i,t}\right].
\end{align}
Then, it follows from Lemma \ref{lem:varphiprobab0} that
\begin{align}\label{varphi-i-independent}
\varphi_i(t,x_1,x_2)=
\begin{cases}
\displaystyle -\frac{1}{c_i}\int_t^{T} e^{-\rho(s-t)}\Ex\left[e^{\sum_{j\neq i} c_j \hat{L}_s^{j,t}}f_i(s,\hat{\mathbf{X}}_s^{-i,t,\mathbf{x}})\right]d\Ex[e^{c_i\hat{L}_s^{i,t}}], &c_i\neq 0.\\[1em]
\displaystyle -\int_t^{T} e^{-\rho(s-t)}\Ex\left[e^{\sum_{j\neq i} c_j \hat{L}_s^{j,t}}f_i(s,\hat{\mathbf{X}}_s^{-i,t,\mathbf{x}})\right]d\Ex[\hat{L}_s^{i,t}], &c_i=0.
\end{cases}
\end{align}
In the sequel, we only handle the case for the function $\varphi_1$ because the case for $\varphi_2$ is similar. It suffices to prove that $\varphi_1\in C^{1,2}([0,T)\times\R_+^2)\cap C([0,T]\times\overline{\R}_+^2)$ satisfies that
\begin{align}
\begin{cases}
\displaystyle \partial_{x_1} \varphi_1(t,x_1,x_2)=\Ex\left[e^{-\rho (\tau_{x_1}^1-t)+c_2 \hat{L}_{\tau_{x_1}^1}^{2,t}}f_1(\tau_{x_1}^1,\hat{X}_{\tau_{x_1}^1}^{2,t,x_2}){\bf 1}_{\tau_{x_1}^1<T}\right]\\[1.2em]
\displaystyle\qquad\qquad\qquad\qquad+c_1 \mathbb{E}\left[\int_{\tau_{x_1}^1\wedge T}^{T}  e^{-\rho (s-t)+c_1 \hat{L}_s^{1,t}+c_2 \hat{L}_s^{2,t}}f_1(s,\hat{X}_s^{2,t,x_2}) d\hat{L}_s^{1,t}\right],\\[1.4em]
\displaystyle \partial_{x_2} \varphi_1(t,x_1,x_2)=-\mathbb{E}\left[\int_t^{\tau^2_{x_2}\wedge T }e^{-\rho (s-t)+c_1 \hat{L}_s^{1,t}} \partial_x f_1(s,\hat{X}_s^{2,t,x_2}) d \hat{L}_s^{1,t}\right]\\[1.2em]
\displaystyle\qquad\qquad\qquad\qquad+c_2\mathbb{E}\left[\int_{\tau^2_{x_2}\wedge T }^T e^{-\rho (s-t)+c_1 \hat{L}_s^{1,t}+c_2 \hat{L}_s^{2,t}} f_1(s,\hat{X}_s^{2,t,x_2}) d \hat{L}_s^{1,t}\right].
\end{cases}
\end{align}
First of all, we consider the case with arbitrary $x_2'>x_2 \geq 0$ and fixed $(t,x_1)\in[0,T]\times \overline{\R}_+$. It follows from \eqref{varphi-i} that
\begin{align*}
&\frac{\varphi_1(t,x_1,x_2')-\varphi_1(t,x_1,x_2)}{x_2'-x_2}\nonumber\\
&\qquad=-\mathbb{E}\left[\int_t^{T} e^{-\rho (s-t)+c_1 \hat{L}_s^{1,t}} \frac{e^{c_2 \hat{L}_s^{2',t}}f_1(s,\hat{X}_s^{2,t,x_2'})-e^{c_2 \hat{L}_s^{2,t}}f_1(s,\hat{X}_s^{2,t,x_2})}{x_2'-x_2}d \hat{L}_s^{1,t}\right]\\
&\qquad=-\mathbb{E}\left[\int_t^{T} e^{-\rho (s-t)+c_1 \hat{L}_s^{1,t}} \frac{e^{c_2 \hat{L}_s^{2',t}}\left(f_1(s,\hat{X}_s^{2,t,x_2'})-f_1(s,\hat{X}_s^{2,t,x_2})\right)}{x_2'-x_2}d \hat{L}_s^{1,t}\right]\\
&\qquad\quad-\mathbb{E}\left[\int_t^{T} e^{-\rho (s-t)+c_1 \hat{L}_s^{1,t}} \frac{f_1(s,\hat{X}_s^{2,t,x_2})\left(e^{c_2 \hat{L}_s^{2',t}}-e^{c_2 \hat{L}_s^{2,t}}\right)}{x_2'-x_2}d \hat{L}_s^{1,t}\right].
\end{align*}
A direct calculation yields that, for all $s \in [t,T]$,
\begin{align*}
&\lim _{x_2' \downarrow x_2} \frac{f_1(s,\hat{X}_s^{2,t,x_2'})-f_1(s,\hat{X}_s^{2,t,x_2})}{x_2'-x_2}\\
&\quad=\left\{\begin{array}{cc}
\displaystyle \partial_x f_1(s,\hat{X}_s^{2,t,x_2}), & \max_{\ell\in[t,s]}\left\{ -\mu_2(\ell-t) -\sum_{k=1}^m \sigma_{2k}(W_{\ell}^{k,2}-W_t^{k,2})\right\} \leq x_2, \\[0.8em]
\displaystyle 0, & \max_{\ell\in[t,s]}\left\{-\mu_2(\ell-t) -\sum_{k=1}^m \sigma_{2k}(W_{\ell}^{k,2}-W_t^{k,2})\right\} >x_2.
\end{array}\right.
\end{align*}
By Assumption {\bf(A)}, for any $\epsilon>0$ and $x_2'\in(x_2,x_2+\epsilon)$, there exists some constant $C>0$  and $q\geq 1$ such that
\begin{align*}
\left|\frac{f_1(s,\hat{X}_s^{2,t,x_2'})-f_1(s,\hat{X}_s^{2,t,x_2})}{x_2'-x_2}\right|&\leq C\left[1+\left|\hat{X}_s^{2,t,x_2}\right|^q+\left|\hat{X}_s^{2,t,x_2'}\right|^q\right]\\
&\leq C\left[1+\left|\hat{X}_s^{2,t,x_2}\right|^q+\left|\hat{X}_s^{2,t,x_2+\epsilon}\right|^q\right],
\end{align*}
where the validity of the last inequality is due to \eqref{eq:RSDEi}. Denote by $\hat{L}^{2',t}=(\hat{L}^{2',t}_s)_{s\in[t,T]}$ the local time process of $\hat{X}^{2,t,x_2'}=(\hat{X}_s^{2,t,x_2'})_{s\in[t,T]}$ with initial state $\hat{X}_t^{2,t,x_2'}=x_2'$ at initial time $t$.
It then follows from the DCT and the fact that $\hat{L}_s^{2,t}=0$ on $[t,\tau_{x_2}^2]$ that
\begin{align}\label{eq:varphi-x2-1}
&\lim _{x_2'\in(x_2,x_2+\epsilon), ~x_2' \downarrow x_2} \int_t^{T} \mathbb{E}\left[e^{-\rho (s-t)+c_1 \hat{L}_s^{1,t}+c_2 \hat{L}_s^{2',t}} \frac{f_1(s,\hat{X}_s^{2,t,x_2'})-f_1(s,\hat{X}_s^{2,t,x_2})}{x_2'-x_2}d \hat{L}_s^{1,t}\right]\nonumber \\
&\qquad\qquad=\mathbb{E}\bigg[ \int_t^{T}e^{-\rho (s-t)+c_1 \hat{L}_s^{1,t}+c_2 \hat{L}_s^{2,t}} \partial_xf_1(s,\hat{X}_s^{2,t,x_2})\nonumber\\
&\qquad\qquad\qquad\quad\times{\bf1}_{\max_{\ell\in[t,s]}\{ -\mu_2 (\ell-t) -\sum_{k=1}^m \sigma_{2k}(W_{\ell}^{k,2}-W_t^{k,2})\} \leq x_2} d\hat{L}_s^{1,t}\bigg] \nonumber\\
&\qquad\qquad=\mathbb{E}\left[\int_t^{\tau^2_{x_2}\wedge T }e^{-\rho (s-t)+c_1 \hat{L}_s^{1,t}+c_2 \hat{L}_s^{2,t}} \partial_x f_1(s,\hat{X}_s^{2,t,x_2}) d \hat{L}_s^{1,t}\right]\nonumber\\
&\qquad\qquad=\mathbb{E}\left[\int_t^{\tau^2_{x_2}\wedge T }e^{-\rho (s-t)+c_1 \hat{L}_s^{1,t}} \partial_x f_1(s,\hat{X}_s^{2,t,x_2}) d \hat{L}_s^{1,t}\right],
\end{align}
where $\tau_{x_2}^2$ is a stopping time which is defined by
\begin{align}\label{tau-j}
\tau_{x_2}^2:=\inf\left\{s\geq t;~-\mu_2 (s-t) -\sum_{k=1}^m \sigma_{2k}(W_s^{k,2}-W_t^{k,2})=x_2\right\}.
\end{align}
On the other hand, in lieu of \eqref{eq:RSDEi}, it follows that, for all $s\in[t,T]$,
\begin{align*}
\begin{cases}
\displaystyle \hat{L}_s^{2,t}=x_2 \vee\left\{\max_{\ell\in[t,s]}\left(-\mu_2 (\ell-t) -\sum_{k=1}^m \sigma_{2k}(W_{\ell}^{k,2}-W_t^{k,2})\right)\right\}-x_2,\\[1.5em]
\displaystyle \hat{L}_s^{2',t}=x_2' \vee\left\{\max_{\ell\in[t,s]}\left(-\mu_2 (\ell-t) -\sum_{k=1}^m \sigma_{2k}(W_{\ell}^{k,2}-W_t^{k,2})\right)\right\}-x_2'.
\end{cases}
\end{align*}
This implies that
\begin{align*}
&\lim _{x_2' \downarrow x_2} \frac{e^{c_2 \hat{L}_s^{2',t}}-e^{c_2 \hat{L}_s^{2,t}}}{x_2'-x_2}\\
&\quad=\begin{cases}
\displaystyle ~~~0, & \max_{\ell\in[t,s]}\left\{ -\mu_2(\ell-t) -\sum_{k=1}^m \sigma_{2k}(W_{\ell}^{k,2}-W_t^{k,2})\right\} \leq x_2, \\[1.2em]
\displaystyle -c_2 e^{c_2 \hat{L}_s^{2,t}}, & \max_{\ell\in[t,s]}\left\{-\mu_2(\ell-t) -\sum_{k=1}^m \sigma_{2k}(W_{\ell}^{k,2}-W_t^{k,2})\right\} >x_2.
\end{cases}
\end{align*}
Then, we obtain from the DCT that
\begin{align}\label{eq:varphi-x2-2}
&\lim _{x_2'\in(x_2,x_2+\epsilon), ~x_2' \downarrow x_2} \mathbb{E}\left[\int_t^{T} e^{-\rho (s-t)+c_1 \hat{L}_s^{1,t}} \frac{f_1(s,\hat{X}_s^{2,t,x_2})(e^{c_2 \hat{L}_s^{2',t}}-e^{c_2 \hat{L}_s^{2,t}})}{x_2'-x_2}d \hat{L}_s^{1,t}\right]\nonumber \\
&\qquad\quad=-c_2 \mathbb{E}\bigg[ \int_t^{T}e^{-\rho (s-t)+c_1 \hat{L}_s^{1,t}+c_2 \hat{L}_s^{2,t}} f_1(s,\hat{X}_s^{2,t,x_2})\nonumber\\
&\qquad\qquad\qquad\times{\bf1}_{\max_{\ell\in[t,s]}\{ -\mu_2 (\ell-t) -\sum_{k=1}^m \sigma_{2k}(W_{\ell}^{k,2}-W_t^{k,2})\}> x_2} d\hat{L}_s^{1,t}\bigg] \nonumber\\
&\qquad\quad=-c_2\mathbb{E}\left[\int_{\tau^2_{x_2}\wedge T }^T e^{-\rho (s-t)+c_1 \hat{L}_s^{1,t}+c_2 \hat{L}_s^{2,t}}f_1(s,\hat{X}_s^{2,t,x_2}) d \hat{L}_s^{1,t}\right].
\end{align}
For the case $x_2>x_2' \geq 0$, similar to the computation for \eqref{eq:varphi-x2-1} and \eqref{eq:varphi-x2-2}, we can show that
\begin{align}\label{eq:varphi-x2-3}
\lim _{x_2' \uparrow x_2} \frac{\varphi_1(t,x_1,x_2')-\varphi_1(t,x_1,x_2)}{x_2'-x_2}=\lim _{x_2' \downarrow x_2} \frac{\varphi_1(t,x_1,x_2')-\varphi_1(t,x_1,x_2)}{x_2'-x_2}.
\end{align}
Thus, we can deduce from \eqref{eq:varphi-x2-1}, \eqref{eq:varphi-x2-2} and \eqref{eq:varphi-x2-3} that
\begin{align}\label{varphi-x}
\partial_{x_2} \varphi_1(t,x_1,x_2)&=-\mathbb{E}\left[\int_t^{\tau^2_{x_2}\wedge T }e^{-\rho (s-t)+c_1 \hat{L}_s^{1,t}} \partial_x f_1(s,\hat{X}_s^{2,t,x_2}) d \hat{L}_s^{1,t}\right]\nonumber\\
&\quad+c_2\mathbb{E}\left[\int_{\tau^2_{x_2}\wedge T }^T e^{-\rho (s-t)+c_1 \hat{L}_s^{1,t}+c_2 \hat{L}_s^{2,t}} f_1(s,\hat{X}_s^{2,t,x_2}) d \hat{L}_s^{1,t}\right].
\end{align}
It further holds that
\begin{align*}
&\mathbb{E}\left[\int_{\tau^2_{x_2}\wedge T }^T e^{-\rho (s-t)+c_1 \hat{L}_s^{1,t}+c_2 \hat{L}_s^{2,t}} f_1(s,\hat{X}_s^{2,t,x_2}) d \hat{L}_s^{1,t}\right]\\
&\qquad=\mathbb{E}\left[\int_{t}^T e^{-\rho (s-t)+c_1 \hat{L}_s^{1,t}+c_2 \hat{L}_s^{2,t}} f_1(s,\hat{X}_s^{2,t,x_2}) d \hat{L}_s^{1,t}\right]\\
&\qquad\quad-\mathbb{E}\left[\int_t^{\tau^2_{x_2}\wedge T } e^{-\rho (s-t)+c_1 \hat{L}_s^{1,t}+c_2 \hat{L}_s^{2,t}} f_1(s,\hat{X}_s^{2,t,x_2}) d \hat{L}_s^{1,t}\right]\\
&\qquad=-\varphi_1(t,x_1,x_2)-\mathbb{E}\left[\int_t^{\tau^2_{x_2}\wedge T } e^{-\rho (s-t)+c_1 \hat{L}_s^{1,t}} f_1(s,\hat{X}_s^{2,t,x_2}) d \hat{L}_s^{1,t}\right].
\end{align*}
 Thus, we can conclude that
\begin{align}\label{varphi-x-2}
&\partial_{x_2} \varphi_1(t,x_1,x_2)\nonumber\\
&\quad=-\mathbb{E}\left[\int_t^{\tau^2_{x_2}\wedge T }e^{-\rho (s-t)+c_1 \hat{L}_s^{1,t}} \partial_x f_1(s,\hat{X}_s^{2,t,x_2}) d \hat{L}_s^{1,t}\right]\nonumber\\
&\qquad-c_2\varphi_1(t,x_1,x_2)-c_2\mathbb{E}\left[\int_t^{\tau^2_{x_2}\wedge T } e^{-\rho (s-t)+c_1 \hat{L}_s^{1,t}} f_1(s,\hat{X}_s^{2,t,x_2}) d \hat{L}_s^{1,t}\right].\nonumber\\
&\quad=\mathbb{E}\left[\int_t^{\tau^2_{x_2}\wedge T }e^{-\rho (s-t)+c_1 \hat{L}_s^{1,t}}F(s,\hat{X}_s^{2,t,x_2})d \hat{L}_s^{1,t}\right]-c_2\varphi_1(t,x_1,x_2),
\end{align}
where the real-valued function $F(t,x_1):=-c_2 f_1(t,x_1)-\partial_{x_1} f_1(t,x_1)$ for $(t,x_1)\in[0,T]\times \overline{\R}_+$.

For any $x_2, x_2^{(n)} \geq 0$ with $x_2^{(n)} \rightarrow x_2$ as $n \to\infty$ and $(t,x_1)\in[0,T]\times\overline{\R}_+$, we have from \eqref{varphi-x-2} that, for all $n \geq 1$,
\begin{align*}
\Delta_n&:=\frac{\partial_{x_2} \varphi_1(t,x_1,x_2^{(n)})-\partial_{x_2} \varphi_1(t,x_1,x_2)}{x_2^{(n)}-x_2}\\
&= \Ex\left[\frac{1}{x_2^{(n)}-x_2}\int_{\tau_{0}}^{\tau_{n}} e^{-\rho (s-t)+c_1 \hat{L}_s^{1,t}}F(s,\hat{X}_s^{2,t,x_2})d\hat{L}_s^{1,t}\right]\nonumber\\
&\qquad\quad+\Ex\left[\frac{1}{x_2^{(n)}-x_2}\int_{t}^{\tau_{0}}e^{-\rho (s-t)+c_1 \hat{L}_s^{1,t}}\left( F(s,\hat{X}_s^{2,t,x_2^{(n)}})-F(s,\hat{X}_s^{2,t,x_2})\right)d\hat{L}_s^{1,t}\right]\nonumber
\end{align*}
\begin{align}\label{eq:Deltan}
&\quad+ \Ex\left[\frac{1}{x_2^{(n)}-x_2}\int_{\tau_{0}}^{\tau_{n}} e^{-\rho (s-t)+c_1 \hat{L}_s^{1,t}}\left( F(s,\hat{X}_s^{2,t,x_2^{(n)}})-F(s,\hat{X}_s^{2,t,x_2})\right)d\hat{L}_s^{1,t}\right]\nonumber\\
&\quad-c_2\frac{\varphi_1(t,x_1,x_2^{(n)})-\varphi_1(t,x_1,x_2)}{x_2^{(n)}-x_2}\nonumber\\
&:=\Delta_n^{(1)}+\Delta_n^{(2)}+\Delta_n^{(3)}+\Delta_n^{(4)},
\end{align}
where, for simplicity, we set $\tau_n:=\tau^2_{x_2^{(n)}}\wedge T$ and $\tau_0:=\tau^2_{x_2}\wedge T$.

In order to handle the term $\Delta_n^{(1)}$, we first focus on the case with $x_2^{(n)} \downarrow x_2$ as $n \to \infty$. Let us define the drifted-Brownian motion $\tilde{W}^{i,t}=(\tilde{W}_s^{i,t})_{s\in[t,T]}$ given by, for $i=1,2$,
\begin{align}\label{eq:tildeW}
\tilde{W}_s^{i,t}:=-\mu_i (s-t) -\sum_{k=1}^m \sigma_{ik}(W_s^{k,i}-W_t^{k,i}),\quad \forall s\in[t,T].
\end{align}
It follows from \eqref{eq:RSDEi} that
\begin{align}\label{eq:Xi-max}
\hat{X}_s^{i,t,x_i}&=x_i +  \int_t^s \mu_i dr + \sum_{k=1}^m \int_t^T \sigma_{ik}dW_s^{k,i} + \hat{L}_s^{i,t}\\
&=x_i-\tilde{W}_s^{i,t}+\left(\sup_{\ell\in[t,s]}\tilde{W}_{\ell}^{i,t}-x_i\right)^+,\quad \forall s\in[t,T].\nonumber
\end{align}
 For $s\in[t,T]$, let $\phi_i(t,s,r,y)$ be the joint probability density of the two-dimensional random variable $(\tilde{W}_s^{i,t},\max_{\ell\in[t,s]}\tilde{W}^{i,t}_{\ell})$ (c.f. \citealt{Harrison85}). That is, for all $(s,r,y)\in[t,T]\times\R^2$,
\begin{align}\label{eq:phii}
\phi_i(t,s,r,y)=\frac{2(2 y-r)}{\tilde{\sigma}_i^2\sqrt{2\tilde{\sigma}_i^2 \pi (s-t)^3}}\exp\left[\frac{\mu_i}{\tilde{\sigma}_i} x-\frac{1}{2} \mu_i^2 (s-t)-\frac{(2 y-r)^2}{2\tilde{\sigma}_i^2 (s-t)}\right],
\end{align}
where the constant $\tilde{\sigma}_i:=\sqrt{\sum_{k=1}^m\sigma_{ik}^2}$ for $i=1,2$. Consequently, for the case $c_1\neq 0$, we have
\begin{align}\label{eq:delat-n1-1}
&\Ex\left[\frac{1}{x_2^{(n)}-x_2}\int_{\tau_{0}}^{\tau_{n}}e^{-\rho (s-t)+c_1 \hat{L}_s^{1,t}} F(s,\hat{X}_s^{2,t,x_2})d\hat{L}_s^{1,t}\right]\\
&\quad=\frac{1}{c_1}\Ex\left[\frac{1}{x_2^{(n)}-x_2}\int_{\tau_{0}}^{\tau_{n}}e^{-\rho (s-t)}  F(s,\hat{X}_s^{2,t,x_2})\mathbf{1}_{ \{\tau_{0}<s\leq \tau_{n}\}}de^{c_1 \hat{L}_s^{1,t}}\right]\nonumber\\
&\quad=\frac{1}{c_1}\int_{t}^{T}e^{-\rho (s-t)}\Ex\left[\frac{F\left(s, x_2-\tilde{W}_s^{2,t}+(\sup_{q\in[t,s]}\tilde{W}_q^{2,t}-x_2)^+\right)}{x_2^{(n)}-x_2}\right.\nonumber\\
&\qquad\qquad\qquad\left.\times \mathbf{1}_{\{x_2<\max _{q \in[t, s]}\tilde{W}_q^{2,t} \leq x_2^{(n)}\}}\right]d\Ex[e^{c_1\hat{L}_s^{1,t}}]\nonumber\\
&\quad=\frac{1}{c_1}\frac{1}{x_2^{(n)}-x_2} \int_t^T\int_{x_2}^{x_2^{(n)}}\int_{-\infty}^{y} e^{-\rho (s-t)} F(s,y-r) \phi_2(t,s,r,y)drdy d\Ex[e^{c_1 \hat{L}_s^{1,t}}].\nonumber
\end{align}
For $0\leq t\leq s\leq T$ and $y\in\overline{\R}_+$, set $g(t,s,y):=\int_{-\infty}^{y}  \partial_xf_1(s,y-r)\phi_2(t,s,r,y)dr$.  Then, using the continuity of $y\to g(t,s,y)$, one has
\begin{align}\label{eq:delat-n1-2}
\lim_{n\to \infty} \frac{1}{x_2^{(n)}-x_2}\int_{x_2}^{x_2^{(n)}} g(t,s,y)dy=g(t,s,x_2).
\end{align}
It then follows from \eqref{eq:delat-n1-1}, \eqref{eq:delat-n1-2} and the DCT that
\begin{align*}
&\lim_{n\to \infty} \Ex\left[\frac{1}{x_2^{(n)}-x_2}\int_{\tau_{0}}^{\tau_{n}}e^{-\rho (s-t)+c_1 \hat{L}_s^{1,t}} F(s,\hat{X}_s^{2,t,x_2})d\hat{L}_s^{1,t}\right]\\
&\qquad=\frac{1}{c_1}\int_t^T\left(\int_{-\infty}^{x_2} e^{-\rho (s-t)} F(s,x_2-r) \phi_2(t,s,r,x_2)dr\right) d\Ex[e^{c_1 \hat{L}_s^{1,t}}].\nonumber
\end{align*}

For the case when $x_2>0$ and $x_2^{(n)}>0$ with $x_2^{(n)}\uparrow x_2$ as $n\to \infty$, using a similar argument as above, we can derive that
\begin{align*}
&\lim_{n\to \infty}\Delta_n^{(1)} =\frac{1}{c_1} \int_t^T\left(\int_{-\infty}^{x_2} e^{-\rho (s-t)} F(s,x_2-r) \phi_2(t,s,r,x_2)dr\right) d\Ex[e^{c_1 \hat{L}_s^{1,t}}].
\end{align*}
If $c_1=0$, we can also obtain that
\begin{align*}
&\lim_{n\to \infty}\Delta_n^{(1)} =\int_t^T\left(\int_{-\infty}^{x_2} e^{-\rho (s-t)} F(s,x_2-r) \phi_2(t,s,r,x_2)dr\right) d\Ex[\hat{L}_s^{1,t}].
\end{align*}
In a similar fashion as in the derivation of \eqref{eq:varphi-x2-1}, we also have
\begin{align*}
\lim_{n\to \infty } \Delta_n^{(2)}
&=\lim_{n\to \infty }\Ex\left[\frac{1}{x_2^{(n)}-x_2}\int_{t}^{\tau_{0}}e^{-\rho (s-t)+c_1 \hat{L}_s^{1,t}}\left( F(s,\hat{X}_s^{2,t,x_2^{(n)}})- F(s,\hat{X}_s^{2,t,x_2})\right)d\hat{L}_s^{1,t}\right]\\
&=\Ex\left[\int_{t}^{\tau_{0}}e^{-\rho (s-t)+c_1 \hat{L}_s^{1,t}}\partial_{x}F(s,\hat{X}_s^{2,t,x_2})d\hat{L}_s^{1,t}\right].
\end{align*}
For the term $\Delta_n^{(3)}$, we can also conclude that
\begin{align*}
\left|\Delta_n^{(3)}\right|
&=\left|\Ex\left[\frac{1}{x_2^{(n)}-x_2}\int_{\tau_{0}}^{\tau_{n}} e^{-\rho (s-t)+c_1 \hat{L}_s^{1,t}}\left(F(s,\hat{X}_s^{2,t,x_2^{(n)}})- F(s,\hat{X}_s^{2,t,x_2})\right)d\hat{L}_s^{1,t}\right]\right|\\
&\leq \Ex\left[\frac{1}{x_2^{(n)}-x_2}\sup_{s\in [t,T]}\left|  F(s,\hat{X}_s^{2,t,x^{(n)}_2})-F(s,\hat{X}_s^{2,t,x_2})\right|\int_{\tau_{0}}^{\tau_{n}} e^{c_1 \hat{L}_s^{1,t}}d\hat{L}_s^{1,t}\right].
\end{align*}
By applying Assumption {\bf(A)}, for $\epsilon>0$ and $x_2'\in(x_2,x_2+\epsilon)$, there exist some constants $C>0$ and $q\geq 1$ such that
\begin{align*}
\left|\frac{F(s,\hat{X}_s^{2,t,x_2'})-F(s,\hat{X}_s^{2,t,x_2})}{x_2'-x_2}\right|&\leq C\left[1+\left|\hat{X}_s^{2,t,x_2}\right|^q+\left|\hat{X}_s^{2,t,x_2'}\right|^q\right]\\
&\leq C\left[1+\left|\hat{X}_s^{2,t,x_2}\right|^q+\left|\hat{X}_s^{2,t,x_2+\epsilon}\right|^q\right].
\end{align*}
Using the result above and the fact that $\tau_{n} \to \tau_{0}$, a.s., as $n\to \infty$, we deduce from the DCT  that $\lim_{n\to \infty}|\Delta_n^{(3)}|=0$.
At last, we can also see that
\begin{align*}
\lim_{n\to \infty}\Delta_n^{(4)}=-c_2\lim_{n\to \infty}\frac{\varphi_1(t,x_1,x_2^{(n)})-\varphi_1(t,x_1,x_2)}{x_2^{(n)}-x_2}=-c_2 \partial_{x_2} \varphi_1(t,x_1,x_2).
\end{align*}
Putting all the pieces together, we conclude that
\begin{align}\label{eq:m-rr1}
&\partial_{x_2x_2}\varphi_1(t,x_1,x_2)={\bf 1}_{c_1\neq 0}\frac{1}{c_1}\int_t^T\left(\int_{-\infty}^{x_2} e^{-\rho (s-t)} F(s,x_2-r) \phi_2(t,s,r,x_2)dr\right) d\Ex[e^{c_1 \hat{L}_s^{1,t}}]\nonumber\\
&\qquad\qquad+{\bf 1}_{c_1=0}\int_t^T\left(\int_{-\infty}^{x_2} e^{-\rho (s-t)} F(s,x_2-r) \phi_2(t,s,r,x_2)dr\right) d\Ex[\hat{L}_s^{1,t}]\nonumber\\
&\qquad\qquad+\Ex\left[\int_{t}^{\tau_{0}}e^{-\rho (s-t)+c_1 \hat{L}_s^{1,t}}\left( \partial_{x}F(s,\hat{X}_s^{2,t,x_2})\right)d\hat{L}_s^{1,t}\right]-c_2 \partial_{x_2} \varphi_1(t,x_1,x_2).
\end{align}

We next derive the representation of the partial derivative $\partial_{x_1}\varphi_1(t,x_1,x_2)$. For any $x_1'>x_1 \geq 0$, and fixed $(t,x_2)\in[0,T]\times \overline{\R}_+$, it follows from \eqref{varphi-i} that
\begin{align*}
&\frac{\varphi_1(t,x_1',x_2)- \varphi_1(t,x_1,x_2)}{x_1'-x_1}\\
&=-\mathbb{E}\left[\int_t^{T}  e^{-\rho (s-t)+c_1 \hat{L}_s^{1',t}+c_2 \hat{L}_s^{2,t}}f_1(s,\hat{X}_s^{2,t,x_2}) d\left(\frac{\hat{L}_s^{1',t}-\hat{L}_s^{1,t}}{x_1'-x_1}\right)\right]\\
&~-\mathbb{E}\left[\int_t^{T}  e^{-\rho (s-t)+c_2 \hat{L}_s^{2,t}}f_1(s,\hat{X}_s^{2,t,x_2})\left(\frac{e^{c_1 \hat{L}_s^{1',t}}-e^{c_1\hat{L}_s^{1,t}}}{x_1'-x_1}\right) d\hat{L}_s^{1,t}\right]
\end{align*}
In lieu of \eqref{eq:RSDEi}, it follows that, for all $s\in[t,T]$,
\begin{align*}
\begin{cases}
\displaystyle \hat{L}_s^{1,t}=x_1 \vee\left\{\max_{\ell\in[t,s]}\left(-\mu_1 (\ell-t) -\sum_{k=1}^m \sigma_{1k}(W_{\ell}^{k,1}-W_t^{k,1})\right)\right\}-x_1,\\[1.4em]
\displaystyle \hat{L}_s^{1',t}=x_1' \vee\left\{\max_{\ell\in[t,s]}\left(-\mu_1 (\ell-t) -\sum_{k=1}^m \sigma_{1k}(W_{\ell}^{k,1}-W_t^{k,1})\right)\right\}-x_1'.
\end{cases}
\end{align*}
 A direct calculation yields that, for all $s\in[t,T]$,
\begin{align*}
&\frac{\hat{L}_s^{1',t}-\hat{L}_s^{1,t}}{x_1'-x_1}=
\begin{cases}
\displaystyle ~~~~0, & ~~~t\leq s< \tau_{x_1}^1\wedge T, \\[0.4em]
\displaystyle-\frac{\hat{L}_s^{1,t}}{x_1'-x_1},  & \tau_{x_1}^1\wedge T\leq s<\tau_{x_1'}^1\wedge T,\\[0.7em]
\displaystyle~~~-1, & ~~~ \tau_{x_1'}^1\wedge T\leq s\leq T.
\end{cases}
\end{align*}
Then,  it holds that
\begin{align*}
&\mathbb{E}\left[\int_t^{T}e^{-\rho (s-t)+c_1 \hat{L}_s^{1',t}+c_2 \hat{L}_s^{2,t}}  f_1(s,\hat{X}_s^{2,t,x_2}) d\left(\frac{\hat{L}_s^{1',t}-\hat{L}_s^{1,t}}{x_1'-x_1}\right)\right]\\
&\qquad=-\mathbb{E}\left[\int_{\tau_{x_1}^1\wedge T}^{\tau_{x_1'}^1\wedge T}e^{-\rho (s-t)+c_1 \hat{L}_s^{1',t}+c_2 \hat{L}_s^{2,t}} f_1(s,\hat{X}_s^{2,t,x_2}) d\left(\frac{\hat{L}_s^{1,t}}{x_1'-x_1}\right)\right]   \nonumber\\
&\qquad=-\mathbb{E}\left[\int_{\tau_{x_1}^1\wedge T}^{\tau_{x_1'}^1\wedge T}\left(e^{-\rho (s-t)+c_1 \hat{L}_s^{1',t}+c_2 \hat{L}_s^{2,t}}f_1(s,\hat{X}_s^{2,t,x_2})\right.\right.\\
&\qquad\qquad\qquad\quad\left.\left.-e^{-\rho (\tau_{x_1}^1-t)+c_1 \hat{L}_{\tau_{x_1}^1}^{1',t}+c_2 \hat{L}_{\tau_{x_1}^1}^{2,t}}f_1(\tau_{x_1}^1,\hat{X}_{\tau_{x_1}^1}^{2,t,x_2})\right) d\left(\frac{\hat{L}_s^{1,t}}{x_1'-x_1}\right)\right]\\
&\qquad\quad-\Ex\left[\int_{\tau_{x_1}^1\wedge T}^{\tau_{x_1'}^1\wedge T}e^{-\rho (\tau_{x_1}^1-t)+c_1 \hat{L}_{\tau_{x_1}^1}^{1',t}+c_2 \hat{L}_{\tau_{x_1}^1}^{2,t}}f_1(\tau_{x_1}^1,\hat{X}_{\tau_{x_1}^1}^{2,t,x_2})d\left(\frac{\hat{L}_s^{1,t}}{x_1'-x_1}\right)\right].
\end{align*}
Note that, as $x_1' \downarrow x_1$, we have
\begin{align*}
&\left|\mathbb{E}\left[\int_{\tau_{x_1}^1\wedge T}^{\tau_{x_1'}^1\wedge T}\left(e^{-\rho (s-t)+c_1 \hat{L}_s^{1',t}+c_2 \hat{L}_s^{2,t}}f_1(s,\hat{X}_s^{2,t,x_2})\right.\right.\right.\\
&\qquad\qquad\quad\left.\left.\left.-e^{-\rho (\tau_{x_1}^1-t)+c_1 \hat{L}_{\tau_{x_1}^1}^{1',t}+c_2 \hat{L}_{\tau_{x_1}^1}^{2,t}}f_1(\tau_{x_1}^1,\hat{X}_{\tau_{x_1}^1}^{2,t,x_2})\right) d\left(\frac{\hat{L}_s^{1,t}}{x_1'-x_1}\right)\right]\right|\\
&\leq \Ex\left[\sup_{s\in [\tau_{x_1}^1\wedge T,\tau_{x_1'}^1\wedge T]}\left|e^{-\rho (s-t)+c_1 \hat{L}_s^{1',t}+c_2 \hat{L}_s^{2,t}}f_1(s,\hat{X}_s^{2,t,x_2})\right.\right.\\
&\left.\left.\qquad\qquad\qquad-e^{-\rho (\tau_{x_1}^1-t)+c_1 \hat{L}_{\tau_{x_1}^1}^{1',t}+c_2 \hat{L}_{\tau_{x_1}^1}^{2,t}}f_1(\tau_{x_1}^1,\hat{X}_{\tau_{x_1}^1}^{2,t,x_2})\right|\right]\to 0.
\end{align*}
 On the other hand, as $x_1' \downarrow x_1$, we can also obtain 
\begin{align*}
&\lim _{x_1' \downarrow x_1}\int_{\tau_{x_1}^1\wedge T}^{\tau_{x_1'}^1\wedge T}e^{-\rho (\tau_{x_1}^1-t)+c_1 \hat{L}_{\tau_{x_1}^1}^{1',t}+c_2 \hat{L}_{\tau_{x_1}^1}^{2,t}}f_1(\tau_{x_1}^1,\hat{X}_{\tau_{x_1}^1}^{2,t,x_2})d\left(\frac{\hat{L}_s^{1,t}}{x_1'-x_1}\right)\\
&\quad = e^{-\rho (\tau_{x_1}^1-t)+c_1 \hat{L}_{\tau_{x_1}^1}^{1,t}+c_2 \hat{L}_{\tau_{x_1}^1}^{2,t}}f_1(\tau_{x_1}^1,\hat{X}_{\tau_{x_1}^1}^{2,t,x_2})\times\lim _{x_1' \downarrow x_1}{\bf 1}_{\tau_{x_1}^1<T}\int_{\tau_{x_1}^1\wedge T}^{\tau_{x_1'}^1\wedge T}d\left(\frac{\hat{L}_s^{1,t}}{x_1'-x_1}\right)\\
&\quad=e^{-\rho (\tau_{x_1}^1-t)+c_1 \hat{L}_{\tau_{x_1}^1}^{1,t}+c_2 \hat{L}_{\tau_{x_1}^1}^{2,t}}f_1(\tau_{x_1}^1,\hat{X}_{\tau_{x_1}^1}^{2,t,x_2})\times \lim _{x_1' \downarrow x_1} {\bf 1}_{\tau_{x_1}^1<T}\int_{\tau_{x_1}^1}^{\tau_{x_1'}^1}d\left(\frac{\hat{L}_s^{1,t}}{x_1'-x_1}\right)\\
&\quad=e^{-\rho (\tau_{x_1}^1-t)+c_2 \hat{L}_{\tau_{x_1}^1}^{2,t}}f_1(\tau_{x_1}^1,\hat{X}_{\tau_{x_1}^1}^{2,t,x_2}){\bf 1}_{\tau_{x_1}^1<T},
\end{align*}
which yields that
\begin{align*}
&\lim _{x_1'\downarrow x_1}\Ex\left[\int_{\tau_{x_1}^1\wedge T}^{\tau_{x_1'}^1\wedge T}e^{-\rho (\tau_{x_1}^1-t)+c_1 \hat{L}_{\tau_{x_1}^1}^{1',t}+c_2 \hat{L}_{\tau_{x_1}^1}^{2,t}}f_1(\tau_{x_1}^1,\hat{X}_{\tau_{x_1}^1}^{2,t,x_2})d\left(\frac{\hat{L}_s^{1,t}}{x_1'-x_1}\right)\right]\\
&\qquad=\Ex\left[e^{-\rho (\tau_{x_1}^1-t)+c_2 \hat{L}_{\tau_{x_1}^1}^{2,t}}f_1(\tau_{x_1}^1,\hat{X}_{\tau_{x_1}^1}^{2,t,x_2}){\bf 1}_{\tau_{x_1}^1<T}\right].
\end{align*}
The results above yield that
\begin{align}\label{eq:m-h-1}
&\lim _{x_1' \downarrow x_1}\mathbb{E}\left[\int_t^{T}e^{-\rho (s-t)+c_1 \hat{L}_s^{1',t}+c_2 \hat{L}_s^{2,t}}  f_1(\tau_{x_1}^1,\hat{X}_s^{2,t,x_2}) d\left(\frac{\hat{L}_s^{1',t}-\hat{L}_s^{1,t}}{x_1'-x_1}\right)\right]\\
&\qquad=-\Ex\left[e^{-\rho (\tau_{x_1}^1-t)+c_2 \hat{L}_{\tau_{x_1}^1}^{2,t}}f_1(s,\hat{X}_{\tau_{x_1}^1}^{2,t,x_2}){\bf 1}_{\tau_{x_1}^1<T}\right].\nonumber
\end{align}
Using the fact $\mathbb{P}(\tau^1_{x_1} = T) = 0$ (c.f. \citealt{Harrison85}) and following the same calculations of the representation \eqref{eq:varphi-x2-2}, we have
\begin{align}
&\lim _{x_1' \downarrow x_1}\mathbb{E}\left[\int_t^{T}  e^{-\rho (s-t)+c_2 \hat{L}_s^{2,t}}f_1(s,\hat{X}_s^{2,t,x_2})\left(\frac{e^{c_1 \hat{L}_s^{1',t}}-e^{c_1\hat{L}_s^{1,t}}}{x_1'-x_1}\right) d\hat{L}_s^{1,t}\right]\\
&\qquad=-c_1 \mathbb{E}\left[\int_{\tau_{x_1}^1\wedge T}^{T}  e^{-\rho (s-t)+c_1 \hat{L}_s^{1,t}+c_2 \hat{L}_s^{2,t}}f_1(s,\hat{X}_s^{2,t,x_2}) d\hat{L}_s^{1,t}\right].\nonumber
\end{align}
By the similar argument as above, we also have
\begin{align*}
\lim_{x_1' \uparrow x_1}\frac{ \varphi_1(t,x_1',x_2)- \varphi_1(t,x_1,x_2)}{x_1'-x_1}=\lim _{x_1' \downarrow x_1}\frac{ \varphi_1(t,x_1',x_2)- \varphi_1(t,x_1,x_2)}{x_1'-x_1}.
\end{align*}
Thus, we can conclude that, for all $(t,x_1,x_2)\in[0,T]\times\R^2_+$,
\begin{align}\label{eq:m-h}
\partial_{x_1} \varphi_1(t,x_1,x_2)&=\Ex\left[e^{-\rho (\tau_{x_1}^1-t)+c_2 \hat{L}_{\tau_{x_1}^1}^{2,t}}f_1(\tau_{x_1}^1,\hat{X}_{\tau_{x_1}^1}^{2,t,x_2}){\bf 1}_{\tau_{x_1}^1<T}\right]\nonumber\\
&\quad+c_1 \mathbb{E}\left[\int_{\tau_{x_1}^1\wedge T}^{T}  e^{-\rho (s-t)+c_1 \hat{L}_s^{1,t}+c_2 \hat{L}_s^{2,t}}f_1(s,\hat{X}_s^{2,t,x_2}) d\hat{L}_s^{1,t}\right].
\end{align}
Moreover, in view of \eqref{eq:RSDEi}, it follows that, for all $s\in[t,T]$,
\begin{align*}
\hat{L}_s^{1,t} &=x_1 \vee\left\{\max_{\ell\in[t,s]}\left(-\mu_1 (\ell-t) -\sum_{k=1}^m \sigma_{1k}(W_{\ell}^{k,1}-W_t^{k,1})\right)\right\}-x_1,\\
&=\left(\sup_{\ell\in[t,s]}\tilde{W}_{\ell}^{1,t}-x_1\right)^+.
\end{align*}
For $0\leq t\leq s\leq T$ and $x_1\in\R_+$, let us introduce
\begin{align}\label{eq:function-h}
h_1(t,s,x_1):=
\begin{cases}
\displaystyle \frac{1}{c_1}\partial_s\left(\Ex[e^{c_1 \hat{L}_s^{1,t}}]\right)=\frac{1}{c_1}\partial_s\left(\Ex\left[e^{c_1 \left(\sup_{\ell\in[t,s]}\tilde{W}_{\ell}^{1,t}-x_1\right)^+}\right]\right)\\[1.2em]
\displaystyle\qquad\qquad\qquad\qquad=\frac{1}{c_1}\partial_s\left(\int_0^{\infty} e^{c_1 (r -x_1)^+}p_1(t,s,r)dr\right), &c_1\neq 0;\\[1.2em]
\displaystyle\partial_s\left( \Ex[\hat{L}_s^{1,t}]\right)=\partial_s\left(\Ex\left[\left(\sup_{\ell\in[t,s]}\tilde{W}_{\ell}^{1,t}-x_1\right)^+\right]\right)\\[1.6em]
\displaystyle\qquad\qquad\qquad=\partial_s\left(\int_0^{\infty}(r-x_1)^+p_1(t,s,r)dr\right),&c_1=0.
\end{cases}
\end{align}
Here, $p_1(t,s,r)=\P(\sup_{\ell\in[t,s]}\tilde{W}_{\ell}^{1,t}\in dr)/dr$ is the density function of the maximum of drifted-Brownian motion, i.e., $\sup_{\ell\in[t,s]}\tilde{W}_{\ell}^{1,t}$ at time $s\in [t,T]$, which is given by
\begin{align*}
p_1(t,s,r)=\sqrt{\frac{2}{\pi \sigma_1^2(s-t)}} e^{-\frac{(r-\mu_1(s-t))^2}{2\sigma_1^2 (s-t)}}-\frac{2\mu_1}{\sqrt{2 \pi}\sigma_1^2} e^{\frac{2\mu_1}{\sigma_1^2}r} \int_{-\infty}^{\frac{-r-\mu_1 (s-t)}{\sqrt{\sigma_1^2 t}}}  \exp \left(-\frac{z^2}{2}\right) dz.
\end{align*}
 Hence, we deduce that, for all $(t,x_1,x_2)\in[0,T]\times\R^2_+$,
\begin{align}\label{eq:varphi-1-density}
\varphi_1(t,x_1,x_2)=
\int_t^{T}e^{-\rho (s-t)}\Ex\left[e^{c_2 \hat{L}_s^{2,t}}f_1(s,\hat{X}_{s}^{2,t,x_2})\right]h_1(t,s,x_1)ds.
\end{align}
Note that, for any fixed $s\in[t,T]$, the function $x_1\to  h_1(t,s,x_1)$ belongs to $C^2(\R_+)$, we get, for all $(t,x_1,x_2)\in[0,T]\times\R^2_+$,
\begin{align}\label{eq:m-hh}
\partial_{x_1x_1} \varphi_1(t,x_1,x_2)=
\int_t^{T}e^{-\rho (s-t)}\Ex\left[e^{c_2 \hat{L}_s^{2,t}}f_1(s,\hat{X}_{s}^{2,t,x_2})\right]\partial_{x_1x_1} h_1(t,s,x_1)ds.
\end{align}
By using \eqref{eq:varphi-1-density} and a similar argument as in the derivation of \eqref{varphi-x}, we have
\begin{align}\label{eq:m-rh}
&\partial_{x_1x_2}\varphi_1(t,x_1,x_2)=c_2\int_t^{T}e^{-\rho (s-t)}\Ex\left[e^{c_2 \hat{L}_s^{2,t}}f_1(s,\hat{X}_{s}^{2,t,x_2}){\bf 1}_{s\geq \tau_{x_2}^2}\right]\partial_{x_1} h_1(t,s,x_1)ds\\
&\qquad\qquad\qquad\qquad-\int_t^{T}e^{-\rho (s-t)}\Ex\left[\partial_{x_2}f_1(s,\hat{X}_{s}^{2,t,x_2}){\bf 1}_{s\leq \tau_{x_2}^2}\right]\partial_{x_1} h_1(t,s,x_1)ds.\nonumber
\end{align}

Finally, we examine the expression of the partial derivative $\partial_t \varphi_1(t,x_1,x_2)$. Consider the case $c_1\neq 0$, for any $\delta\in[0,T-t]$, it holds that
\begin{align*}
\varphi_1(t+\delta,x_1,x_2)
&=-\frac{1}{c_1}\int_{t+\delta}^{T} e^{-\rho(s-t-\delta)}\Ex\left[e^{c_2 \hat{L}_s^{2,t+\delta}}f_1(s,\hat{X}_s^{2,t+\delta,x_2})\right]d\Ex[e^{c_1\hat{L}_s^{1,t+\delta}}]\nonumber\\
&=-\frac{1}{c_1}\int_{t}^{T-\delta} e^{-\rho(s-t)}\Ex\left[e^{c_2 \hat{L}_{s+\delta}^{2,t+\delta}}f_1(s+\delta,\hat{X}_{s+\delta}^{2,t+\delta,x_2})\right]d\Ex[e^{c_1\hat{L}_{s+\delta}^{1,t+\delta}}]\\
&=-\frac{1}{c_1}\int_{t}^{T-\delta} e^{-\rho(s-t)}\Ex\left[e^{c_2 \hat{L}_s^{2,t}}f_1(s+\delta,\hat{X}_s^{2,t,x_2})\right]d\Ex[e^{c_1\hat{L}_s^{1,t}}].
\end{align*}
Then, we arrive at
\begin{align*}
&\frac{\varphi_1(t+\delta,x_1,x_2)-\varphi_1(t,x_1,x_2)}{\delta}\\
&\quad=-\frac{1}{c_1\delta}\Bigg\{\int_{t}^{T-\delta} e^{-\rho(s-t)}\Ex\left[e^{c_2 \hat{L}_s^{2,t}}f_1(s+\delta,\hat{X}_s^{2,t,x_2})\right]d\Ex[e^{c_1\hat{L}_s^{1,t}}]\nonumber\\
&\quad\qquad-\int_{t}^{T} e^{-\rho(s-t)}\Ex\left[e^{c_2 \hat{L}_s^{2,t}}f_1(s,\hat{X}_s^{2,t,x_2})\right]d\Ex[e^{c_1\hat{L}_s^{1,t}}] \Bigg\}\\
&\quad=-\frac{1}{c_1}\int_{t}^{T} e^{-\rho(s-t)}\frac{ 1}{\delta}\left[e^{c_2 \hat{L}_s^{2,t}}\left(f_1(s+\delta,\hat{X}_s^{2,t,x_2})-f_1(s,\hat{X}_s^{2,t,x_2})\right)\right]d\Ex[e^{c_1\hat{L}_s^{1,t}}]\\
&\quad\quad+\frac{1}{c_1\delta}\int_{T-\delta}^T e^{-\rho(s-t)}\Ex\left[e^{c_2 \hat{L}_s^{2,t}}f_1(s,\hat{X}_s^{2,t,x_2})\right]d\Ex[e^{c_1\hat{L}_s^{1,t}}]\\
&\quad=:\Delta_{\delta}^{(1)}+\Delta_{\delta}^{(2)}.
\end{align*}
Note that $\lim_{\delta\to 0}\frac{f_1(s+\delta,\hat{X}_{s}^{2,t,x_2})-f_1(s,\hat{X}_{s}^{2,t,x_2})}{\delta}=\partial_t f_1(s,\hat{X}_{s}^{2,t,x_2})$.
Thus, it follows from Assumption {\bf(A)} and DCT that
\begin{align}\label{eq:deltaD1}
\lim_{\delta\to 0}\Delta_{\delta}^{(1)}&=-\lim_{\delta\to 0}\frac{1}{c_1}\int_{t}^{T} e^{-\rho(s-t)}\frac{ 1}{\delta}\left[e^{c_2 \hat{L}_s^{2,t}}\left(f_1(s+\delta,\hat{X}_s^{2,t,x_2})-f_1(s,\hat{X}_s^{2,t,x_2})\right)\right]d\Ex[e^{c_1\hat{L}_s^{1,t}}]\nonumber\\
&=-\frac{1}{c_1}\int_{t}^{T} e^{-\rho(s-t)}\Ex\left[e^{c_2 \hat{L}_s^{2,t}}\partial_t f_1(s,\hat{X}_{s}^{2,t,x_2})\right]d\Ex[e^{c_1\hat{L}_s^{1,t}}].
\end{align}
We next deal with the term $\Delta_{\delta}^{(2)}$. In fact, we have
\begin{align*}
\Delta_{\delta}^{(2)}
&=\frac{1}{c_1\delta}\int_{T-\delta}^T \bigg\{e^{-\rho(s-t)}\Ex\left[e^{c_2 \hat{L}_s^{2,t}}f_1(s,\hat{X}_s^{2,t,x_2})\right]-e^{-\rho(T-t)}\Ex\left[e^{c_2 \hat{L}_T^{2,t}}f_1(T,\hat{X}_T^{2,t,x_2})\right]\bigg\}d\Ex[e^{c_1\hat{L}_s^{1,t}}]\\
&\quad+\frac{1}{c_1\delta}\int_{T-\delta}^T e^{-\rho(T-t)}\Ex\left[e^{c_2 \hat{L}_T^{2,t}}f_1(T,\hat{X}_{T}^{2,t,x_2})\right]d\Ex[e^{c_1\hat{L}_s^{1,t}}]\\
&:=I_{\delta}+J_{\delta}.
\end{align*}
First of all, it holds from \eqref{eq:function-h} that
\begin{align*}
|I_{\delta}|&=\frac{1}{c_1\delta}\int_{T-\delta}^T \bigg|e^{-\rho(s-t)}\Ex\left[e^{c_2 \hat{L}_s^{2,t}}f_1(s+\delta,\hat{X}_{s}^{2,t,x_2})\right]\\
&\qquad\qquad\qquad\quad-e^{-\rho(T-t)}\Ex\left[e^{c_2 \hat{L}_T^{2,t}}f_1(T,\hat{X}_{T}^{2,t,x_2})\right]\bigg|d\Ex[e^{c_1\hat{L}_s^{1,t}}]\\
&\leq \frac{1}{c_1\delta}\int_{T-\delta}^{T}\Ex\bigg[ \max_{s\in[T-\delta,T]}\bigg|e^{-\rho(s-t)+c_2 \hat{L}_s^{2,t}}f_1(s+\delta,\hat{X}_{s}^{2,t,x_2})\\
&\qquad\qquad\qquad\quad-e^{-\rho(T-t)+c_2 \hat{L}_T^{2,t}}f_1(T,\hat{X}_{T}^{2,t,x_2})\bigg|\bigg]d\Ex[e^{c_1\hat{L}_s^{1,t}}]\\
&=\frac{1}{c_1}\Ex\left[\max_{s\in[T-\delta,T]}\left|e^{-\rho(s-t)+c_2 \hat{L}_s^{2,t}}f_1(s+\delta,\hat{X}_{s}^{2,t,x_2})-e^{-\rho(T-t)+c_2 \hat{L}_T^{2,t}}f_1(T,\hat{X}_{T}^{2,t,x_2})\right|\right]\\
&\qquad\times \frac{1}{\delta}\int_{T-\delta}^{T}d\Ex[e^{c_1\hat{L}_s^{1,t}}]\\
&=\frac{1}{c_1}\Ex\left[\max_{s\in[T-\delta,T]}\left|e^{-\rho(s-t)+c_2 \hat{L}_s^{2,t}}f_1(s+\delta,\hat{X}_{s}^{2,t,x_2})-e^{-\rho(T-t)+c_2 \hat{L}_T^{2,t}}f_1(T,\hat{X}_{T}^{2,t,x_2})\right|\right]\\
&\qquad\times\frac{1}{\delta}\int_{T-\delta}^{T} h_1(t,s,x_1)ds.
\end{align*}
We obtain from DCT that $\lim_{\delta\to 0}I_{\delta}=0$. For the term $J_{\delta}$, it follows from \eqref{eq:function-h} that
\begin{align*}
\lim_{\delta \to 0}J_{\delta}&=\lim_{\delta \to 0}\frac{1}{c_1 \delta}\int_{T-\delta}^T  e^{-\rho(T-t)}\Ex\left[e^{c_2 \hat{L}_T^{2,t}}f_1(T,\hat{X}_{T}^{2,t,x_2})\right]d\Ex[e^{c_1\hat{L}_s^{1,t}}]\\
&=\frac{1}{c_1}e^{-\rho(T-t)}\Ex\left[e^{c_2 \hat{L}_T^{2,t}}f_1(T,\hat{X}_{T}^{2,t,x_2})\right]\lim_{\delta \to 0}\left\{\frac{1}{\delta}\int_{T-\delta}^Td\Ex\left[e^{c_1\hat{L}_s^{1,t}}\right]\right\}\\
&=\frac{1}{c_1}e^{-\rho(T-t)}\Ex\left[e^{c_2 \hat{L}_T^{2,t}}f_1(T,\hat{X}_{T}^{2,t,x_2})\right] h_1(t,T,x_1).
\end{align*}
The calculation of the case $c_1=0$ can be obtained by a similar argument. Hence, we deduce that
\begin{align}\label{eq:m-t}
\partial_t \varphi_1(t,x_1,x_2)=
\begin{cases}
\displaystyle -\frac{1}{c_1}\int_{t}^{T} e^{-\rho(s-t)}\Ex\left[e^{c_2 \hat{L}_s^{2,t}}\partial_t f_1(s,\hat{X}_{s}^{2,t,x_2})\right]d\Ex[e^{c_1\hat{L}_s^{1,t}}]\\[1.2em]
\displaystyle\qquad\quad+\frac{1}{c_1}e^{-\rho(T-t)}\Ex\left[e^{c_2 \hat{L}_T^{2,t}}f_1(T,\hat{X}_{T}^{2,t,x_2})\right] h_1(t,T,x_1), &c_1\neq 0;\\[1.2em]
\displaystyle-\int_{t}^{T} e^{-\rho(s-t)}\Ex\left[e^{c_2 \hat{L}_s^{2,t}}\partial_t f_1(s,\hat{X}_{s}^{2,t,x_2})\right]d\Ex[\hat{L}_s^{1,t}]\\[1.2em]
\displaystyle\qquad\quad+e^{-\rho(T-t)}\Ex\left[e^{c_2 \hat{L}_T^{2,t}}f_1(T,\hat{X}_{T}^{2,t,x_2})\right] h_1(t,T,x_1), &c_1=0.
\end{cases}
\end{align}
We then conclude that $\varphi_1\in C^{1,2}([0,T)\times\R_+^2)\cap C([0,T]\times\overline{\R}_+^2)$ by combining results in \eqref{varphi-x}, \eqref{eq:m-rr1}, \eqref{eq:m-h}, \eqref{eq:m-hh}, \eqref{eq:m-rh} and \eqref{eq:m-t}. Thus, we complete the proof of the lemma.
\end{proof}

\begin{proof}[Proof of Proposition~\ref{prop:varphisol}] We first prove item (i). Based on Lemma \ref{lem:regularity-varphi}, we will show that $\varphi(t,\bf{x})$ satisfies the Robin boundary problem \eqref{eq:PDE-multidimL0}. Consider using \eqref{varphi-x} and \eqref{eq:m-h}, we can verify that $\varphi$ satisfies the following Robin boundary condition, for all $ (t,\mathbf{x})\in[0,T)\times \R_+^d$,
\begin{align}\label{eq:Neu}
 \partial_{x_i}\varphi(t,({\bf x}^{-i},0))+c_i \varphi(t,({\bf x}^{-i},0))=f_i(t,{\bf x}^{-i}),\quad  i=1,\ldots,d.
\end{align}
For $i=1,\ldots,d$, we recall the scalar Brownian motion $\tilde{W}^{i,t}=(\tilde{W}^{i,t}_s)_{s\in[t,T]}$ which is given in \eqref{eq:tildeW}. For any $\epsilon\in(0,\min(x_1,\ldots,x_d))$, we define
\begin{align}
\tau_{\epsilon}^t:=\inf\{s\geq t;~|\tilde{W}_s^{i,t}|\geq \epsilon,~\forall i=1,...,d\}
\end{align}
with $\inf\emptyset=+\infty$ by convention. Then, in lieu of \eqref{eq:RSDEi}, one can easily check that $\hat{X}^{i,t,x_i}_{\hat{t}\land \tau_{\epsilon}^t}=x_i+\tilde{W}^{i,t}_{\hat{t}\land \tau_{\epsilon}^t}$ for any $\hat{t}\in[t,T]$. Thus, it follows from the strong Markov property that
\begin{align*}
&-\sum_{i=1}^d\Ex\left[\int_{\hat{t}\wedge\tau_{\epsilon}^t}^{T} e^{-\rho s+\sum_{k=1}^d c_k \hat{L}_s^{k,t}}f_i(s,\hat{\mathbf{X}}_s^{-i,t,\mathbf{x}})d\hat{L}_s^{i,t}|\mathcal{ F}_{\hat{t}\wedge\tau_{\epsilon}^t}\right]=\tilde{\varphi}\left(\hat{t}\wedge\tau_{\epsilon}^t,\hat{\mathbf{X}}^{t,\mathbf{x}}_{\hat{t}\wedge\tau_{\epsilon}^t}\right),
\end{align*}
where we have used the fact from \eqref{eq:varphisol} that
\begin{align}\label{eq:tilde-rho}
\tilde{\varphi}(t,{\bf x})&:=-\sum_{i=1}^d \Ex\left[ \int_t^{T} e^{-\rho s+\sum_{k=1}^d c_k \hat{L}_s^{k,t}}f_i(s,\hat{\mathbf{X}}_s^{-i,t,\mathbf{x}})d\hat{L}_s^{i,t}\right]=e^{-\rho t}\varphi(t,{\bf x}).
\end{align}
Hence, for all $(t,{\bf x})\in [0,T]\times\R_+^d$, it holds that
\begin{align*}
\tilde{\varphi}(t,{\bf x})&=\Ex\left[\tilde{\varphi}\left(\hat{t}\wedge\tau_{\epsilon}^t,\hat{\mathbf{X}}^{t,\mathbf{x}}_{\hat{t}\wedge\tau_{\epsilon}^t}\right)-\sum_{i=1}^d\int_t^{\hat{t}\wedge\tau_{\epsilon}^t} e^{-\rho s+\sum_{k=1}^d c_k \hat{L}_s^{k,t}}f_i(s,\hat{\mathbf{X}}_s^{-i,t,\mathbf{x}})d\hat{L}_s^{i,t}\right].
\end{align*}
It follows from It{\^o}'s formula that
\begin{align}\label{eq:hat-t}
&\frac{1}{\hat{t}-t}\Ex\left[\sum_{i=1}^d\int_t^{\hat{t}\wedge\tau_{\epsilon}^t} e^{-\rho s+\sum_{k=1}^d c_k \hat{L}_s^{k,t}}f_i(s,\hat{\mathbf{X}}_s^{-i,t,\mathbf{x}})d\hat{L}_s^{i,t}\right]\nonumber\\
&\quad
=\frac{1}{\hat{t}-t}\Ex\left[\tilde{\varphi}\left(\hat{t}\wedge\tau_{\epsilon}^t,\hat{\mathbf{X}}^{t,\mathbf{x}}_{\hat{t}\wedge\tau_{\epsilon}^t}\right)
-\tilde{\varphi}(t,\bf{x})\right]=\frac{1}{\hat{t}-t}\Ex\left[\int_t^{\hat{t}\wedge\tau_{\epsilon}^t}(\partial_t+\mathcal{L}^0)\tilde{\varphi}\left(s,\hat{\mathbf{X}}^{t,\mathbf{x}}_{s}\right)d s\right]\nonumber\\
&\qquad\quad+\frac{1}{\hat{t}-t} \Ex\left[\sum_{i=1}^d\int_t^{\hat{t}\wedge\tau_{\epsilon}^t} \left(\partial_{x_i} \tilde{\varphi}+c_i \tilde{\varphi}\right)\left(s,\hat{\mathbf{X}}_s^{t,\mathbf{x}}\right)d\hat{L}_s^{i,t}\right]\nonumber\\
&\quad=\frac{1}{\hat{t}-t}\Ex\left[\int_t^{\hat{t}\wedge\tau_{\epsilon}^t}  e^{-\rho s}(\partial_t +\mathcal{L}^0-\rho)\varphi\left(s,\hat{\mathbf{X}}^{t,\mathbf{x}}_{s}\right)d s\right]\nonumber\\
&\quad\quad+\frac{1}{\hat{t}-t} \Ex\left[\sum_{i=1}^d\int_t^{\hat{t}\wedge\tau_{\epsilon}^t}  e^{-\rho s}(\partial_{x_i} \varphi+c_i \varphi)\left(s,\hat{\mathbf{X}}_s^{t,\mathbf{x}}\right)d\hat{L}_s^{i,t}\right].
\end{align}
Then, the DCT yields that
\begin{align}\label{eq:hat-t-2}
&\lim_{\hat{t}\downarrow t}\frac{1}{\hat{t}-t}\Ex\left[\int_t^{\hat{t}\wedge\tau_{\epsilon}^t}  e^{-\rho s+\sum_{k=1}^d c_k \hat{L}_s^{k,t}}\left(\partial_t+\mathcal{L}^0-\rho\right) \varphi\left(s,\hat{\mathbf{X}}^{t,\mathbf{x}}_{s}\right)d s\right]\nonumber\\
&\qquad=e^{-\rho t}\left(\partial_t+\mathcal{L}^0-\rho \right)\varphi(t,{\bf x}).
\end{align}
Note that $\hat{L}_s^{i,t}=0$ on $s\in[t,\hat{t}\wedge\tau_{\epsilon}^t]$.  By using \eqref{eq:hat-t} and \eqref{eq:hat-t-2} , we obtain that $(\partial_t \varphi+\mathcal{L}^0\varphi-\rho \varphi)(t,{\bf x})=0$ on $(t,{\bf x})\in [0,T)\times\R_+^d$.  Therefore, we deduce that the function $\varphi$ defined by the probabilistic representation \eqref{eq:varphisol} satisfies the Robin boundary problem \eqref{eq:PDE-multidimL0}.

We next prove item (ii). Assume that the Robin boundary problem \eqref{eq:PDE-multidimL0} has a classical solution $\varphi$ satisfying that there exists a constant $C>0$ and $q\geq 1$ such that $|\varphi(t,\mathbf{x})|\leq C(1+|\mathbf{x}|^q)$. For $n\geq 1$ and $t\in[0,T]$, we introduce the stopping time defined by
\begin{align*}
\tau_{t,T}^n:=\inf\{s\geq t;~\min\{X_s^{1,t,x_1},\ldots,X_s^{d,t,x_2}\}\geq n\}\wedge T.
\end{align*}
It follows from It{\^o}'s formula that, for all $(t,{\bf x})\in[0,T]\times\R_+^{d}$,
\begin{align}\label{eq:HJB-m-cla-sol}
&\Ex\left[ e^{\sum_{k=1}^d c_k \hat{L}_{\tau_{t,T}^n}^{k,t}}\tilde{\varphi}\left(\tau_{t,T}^n,\mathbf{\hat{X}}^{t,\mathbf{x}}_{\tau_{t,T}^n}\right)\right]\\
&\qquad = \tilde{\varphi}(t,{\bf x})+\sum_{i=1}^d \Ex\left[\int_t^{\tau_{t,T}^n} e^{\sum_{k=1}^d c_k \hat{L}_s^{k,t}}\left(\partial_{x_i} \tilde{\varphi}+c_i \tilde{\varphi}\right)(s,\hat{\mathbf{X}}_s^{t,\mathbf{x}})  d \hat{L}_s^{i,t}\right] \nonumber\\
&\qquad\quad+\Ex\left[\int_t^{\tau_{t,T}^n} e^{\sum_{k=1}^d c_k \hat{L}_s^{k,t}}\left(\partial_t \tilde{\varphi}+\mathcal{L}^0\tilde{\varphi}\right)(s,\mathbf{\hat{X}}^{t,\mathbf{x}}_s) d s\right]\nonumber \\
&\qquad=e^{-\rho t}\varphi(t,{\bf x})+\sum_{i=1}^d \Ex\left[\int_t^{\tau_{t,T}^n} e^{-\rho s+\sum_{k=1}^d c_k \hat{L}_s^{k,t}}\left(\partial_{x_i} \varphi+c_i \varphi\right)(s,\hat{\mathbf{X}}_s^{t,\mathbf{x}})  d \hat{L}_s^{i,t}\right] \nonumber\\
&\qquad\quad+\Ex\left[\int_t^{\tau_{t,T}^n} e^{-\rho s+\sum_{k=1}^d c_k \hat{L}_s^{k,t}}\left(\partial_t \varphi+\mathcal{L}^0\varphi-\rho \varphi\right)(s,\mathbf{\hat{X}}^{t,\mathbf{x}}_s) d s\right]\nonumber \\
&\qquad=e^{-\rho t} \varphi(t,{\bf x})+\sum_{i=1}^d\Ex\left[\int_t^{\tau_{t,T}^n}  e^{-\rho s+\sum_{k=1}^d c_k \hat{L}_s^{k,t}}f_i(s,\hat{\mathbf{X}}_s^{-i,t,\mathbf{x}})d\hat{L}_s^{i,t}\right].\nonumber
\end{align}
 Moreover, by using the relationship $\tilde{\varphi}(t,{\bf x})=e^{-\rho t}\varphi(t,{\bf x})$ from \eqref{eq:tilde-rho}, the DCT and the polynomial growth condition of $\varphi$ on $[0,T]\times \R_+^d$, we have
\begin{align*}
\lim_{n\to \infty}\Ex\left[e^{\sum_{k=1}^d c_k \hat{L}_{\tau_{t,T}^n}^{k,t}}\tilde{\varphi}\left(\tau_{t,T}^n,\hat{\mathbf{X}}^{t,\mathbf{x}}_{\tau_{t,T}^n}\right)\right]&=\lim_{n\to \infty}\Ex\left[e^{-\rho \tau_{t,T}^n+\sum_{k=1}^d c_k \hat{L}_{\tau_{t,T}^n}^{k,t}}\varphi\left(\tau_{t,T}^n,\hat{\mathbf{X}}^{t,\mathbf{x}}_{\tau_{t,T}^n}\right)\right]\\
&=\Ex\left[e^{-\rho T+\sum_{k=1}^d c_k \hat{L}_{T}^{k,t}}\varphi\left(T,\hat{\mathbf{X}}^{t,\mathbf{x}}_{T}\right)\right]=0.
\end{align*}
Letting $n \rightarrow \infty$ on both sides of the equality \eqref{eq:HJB-m-cla-sol} and using DCT and MCT, we obtain the probabilistic representation \eqref{eq:varphisol} of the solution $\varphi(t,{\bf x})$, which ends the proof of the proposition.
\end{proof}

\begin{proof}[Proof of Lemma \ref{lem:varphixixj}]
By applying \eqref{eq:m-rh} in the proof of Lemma \ref{lem:regularity-varphi}, we have
\begin{align*}
\begin{cases}
\displaystyle \partial_{x_1x_j}\varphi_1(t,{\bf x})=c_j\int_t^{T}e^{-\rho (s-t)}\Ex\left[e^{\sum_{k=2}^d c_k \hat{L}_s^{k,t}}f_1(s,\hat{{\bf X}}_{s}^{-1,t,{\bf x}}){\bf 1}_{s\geq \tau_{x_j}^j}\right]\partial_{x_1} h_1(t,s,x_1)ds\\[1em]
\displaystyle \quad\qquad-\int_t^{T}e^{-\rho (s-t)}\Ex\left[e^{\sum_{k=2}^ d c_k \hat{L}_s^{k,t}}\partial_{x_j}f_1(s,\hat{{\bf X}}_{s}^{-1,t,{\bf x}}){\bf 1}_{s\leq \tau_{x_j}^j}\right]\partial_{x_1} h_1(t,s,x_1)ds,\\[1em]
\displaystyle \hfill 1<j\leq d;\\
\displaystyle\partial_{x_ix_j}\varphi_1(t,{\bf x})=-c_ic_j\int_t^{T}e^{-\rho (s-t)}\Ex\left[e^{\sum_{k=2}^d c_k \hat{L}_s^{k,t}}f_1(s,\hat{{\bf X}}_{s}^{-1,t,{\bf x}}){\bf 1}_{s\geq \tau_{x_i}^i\vee\tau_{x_j}^j}\right] h_1(t,s,x_1)ds\\[1em]
\displaystyle\quad\qquad+c_j\int_t^{T}e^{-\rho (s-t)}\Ex\left[e^{\sum_{k=2}^d c_k \hat{L}_s^{k,t}}\partial_{x_i} f_1(s,\hat{{\bf X}}_{s}^{-1,t,{\bf x}}){\bf 1}_{\tau_{x_i}^i\geq s\geq \tau_{x_j}^j}\right] h_1(t,s,x_1)ds\\[1em]
\displaystyle\quad\qquad+c_i\int_t^{T}e^{-\rho (s-t)}\Ex\left[e^{\sum_{k=2}^ d c_k \hat{L}_s^{k,t}}\partial_{x_j}f_1(s,\hat{{\bf X}}_{s}^{-1,t,{\bf x}}){\bf 1}_{\tau_{x_j}^j\geq s\geq \tau_{x_i}^i}\right] h_1(t,s,x_1)ds\\[1em]
\displaystyle\quad\qquad-\int_t^{T}e^{-\rho (s-t)}\Ex\left[e^{\sum_{k=2}^ d c_k \hat{L}_s^{k,t}}\partial_{x_i x_j}f_1(s,\hat{{\bf X}}_{s}^{-1,t,{\bf x}}){\bf 1}_{s\leq \tau_{x_i}^i\wedge\tau_{x_j}^j}\right] h_1(t,s,x_1)ds,\\
\displaystyle \hfill 1<i<j\leq d.
\end{cases}
\end{align*}
In view of \eqref{eq:RSDEi}, it holds that, for $i=1,\ldots,d$,
\begin{align}\label{eq:L2-max}
\hat{L}_s^{i,t} &=x_i \vee\left\{\max_{\ell\in[t,s]}\left(-\mu_s (\ell-t) -\sum_{k=1}^m \sigma_{sk}(W_{\ell}^{k,i}-W_t^{k,i})\right)\right\}-x_i,\nonumber\\
&=\left(\sup_{\ell\in[t,s]}\tilde{W}_{\ell}^{i,t}-x_i\right)^+.
\end{align}
Then, it follows from \eqref{eq:Xi-max} and \eqref{eq:L2-max} that
\begin{align*}
&\int_t^{T}e^{-\rho (s-t)}\Ex\left[e^{\sum_{k=2}^d c_k \hat{L}_s^{k,t}}f_1(s,\hat{{\bf X}}_{s}^{-1,t,{\bf x}}){\bf 1}_{s\geq \tau_{x_j}^j}\right]\partial_{x_1} h_1(t,s,x_1)ds\\
&=\int_t^{T}e^{-\rho (s-t)}\Ex\left[e^{\sum_{k=2}^d c_k \left(\sup_{\ell\in[t,s]}\tilde{W}_{\ell}^{k,t}-x_k\right)^+}f_1\left(s,{\bf x}^{-1}-\tilde{{\bf W}}_s^{-1,t}+\left(\sup_{\ell\in[t,s]}\tilde{{\bf W}}_{\ell}^{-1,t}-{\bf x}^{-1}\right)^+\right)\right.\\
&\quad\qquad\qquad\left.\times{\bf 1}_{\sup_{\ell\in[t,s]}\tilde{W}_{\ell}^{j,t}\geq x_j}\right]\partial_{x_1} h_1(t,s,x_1)ds\\
&=\int_t^{T}\int_{0}^{\infty}\int_{-\infty}^{y_2}\cdots\int_{x_j}^{\infty}\int_{-\infty}^{y_j}\cdots\int_{0}^{\infty}\int_{-\infty}^{y_d}e^{-\rho(s-t)+\sum_{k=2}^d c_k (y_k-x_k)}\\
&\qquad\times f_1(s,{\bf x}^{-1}-{\bf r}^{-1}+({\bf y}^{-1}-{\bf x}^{-1})^+)\partial_{x_1} h_1(t,s,x_1) \Pi_{k=2}^d\phi_k(t,s,r_k,y_k)dr_2dy_2...dr_d dy_dds.
\end{align*}
In a similar fashion, we can also obtain that
{\small\begin{align*}
&\int_t^{T}e^{-\rho (s-t)}\Ex\bigg[e^{\sum_{k=2}^ d c_k \hat{L}_s^{k,t}}\partial_{x_j}f_1(s,\hat{{\bf X}}_{s}^{-j,t,{\bf x}}){\bf 1}_{s\leq \tau_{x_j}^j}\bigg]\partial_{x_1} h_1(t,s,x_1)ds\\
&=\int_t^{T}e^{-\rho (s-t)}\Ex\bigg[e^{\sum_{k=2}^d c_k \left(\sup_{\ell\in[t,s]}\tilde{W}_{\ell}^{k,t}-x_k\right)^+}\partial_{x_j} f_1\bigg(s,{\bf x}^{-1}-\tilde{{\bf W}}_s^{-1,t}+\bigg(\sup_{\ell\in[t,s]}\tilde{{\bf W}}_{\ell}^{-1,t}-{\bf x}^{-1}\bigg)^+\bigg)\\
&\quad\qquad\qquad\times{\bf 1}_{\sup_{\ell\in[t,s]}\tilde{W}_{\ell}^{j,t}\leq x_j}\bigg]\partial_{x_1} h_1(t,s,x_1)ds\\
&=\int_t^{T}\int_{0}^{\infty}\int_{-\infty}^{y_2}\cdots\int_0^{x_j}\int_{-\infty}^{y_j}\cdots\int_{0}^{\infty}\int_{-\infty}^{y_d}e^{-\rho(s-t)+\sum_{k=2}^d c_k (y_k-x_k)}\nonumber\\
&\qquad\times\partial_{x_j}f_1(s,{\bf x}^{-1}-{\bf r}^{-1}+({\bf y}^{-1}-{\bf x}^{-1})^+)\partial_{x_1} h_1(t,s,x_1) \Pi_{k=2}^d\phi_k(t,s,r_k,y_k)dr_2dy_2...dr_d dy_dds,\\[1em]
&\int_t^{T}e^{-\rho (s-t)}\Ex\bigg[e^{\sum_{k=2}^d c_k \hat{L}_s^{k,t}}f_1(s,\hat{{\bf X}}_{s}^{-1,t,{\bf x}}){\bf 1}_{s\geq \tau_{x_i}^i\vee\tau_{x_j}^j}\bigg] h_1(t,s,x_1)ds\\
&=\int_t^{T}e^{-\rho (s-t)}\Ex\bigg[e^{\sum_{k=2}^d c_k \left(\sup_{\ell\in[t,s]}\tilde{W}_{\ell}^{k,t}-x_k\right)^+} f_1\bigg(s,{\bf x}^{-1}-\tilde{{\bf W}}_s^{-1,t}+\bigg(\sup_{\ell\in[t,s]}\tilde{{\bf W}}_{\ell}^{-1,t}-{\bf x}^{-1}\bigg)^+\bigg)\\
&\quad\qquad\qquad\times{\bf 1}_{\sup_{\ell\in[t,s]}\tilde{W}_{\ell}^{i,t}\geq x_i,~\sup_{\ell\in[t,s]}\tilde{W}_{\ell}^{j,t}\geq x_j}\bigg]h_1(t,s,x_1)ds\\
&=\int_t^{T}\int_{0}^{\infty}\int_{-\infty}^{y_2}\cdots\int_{x_i}^{\infty}\int_{-\infty}^{y_i}\cdots\int_{x_j}^{\infty}\int_{-\infty}^{y_j}\cdots\int_{0}^{\infty}\int_{-\infty}^{y_d}e^{-\rho(s-t)+\sum_{k=2}^d c_k (y_k-x_k)}\\
&\quad\qquad\qquad\times f_1(s,{\bf x}^{-1}-{\bf r}^{-1}+({\bf y}^{-1}-{\bf x}^{-1})^+) h_1(t,s,x_1) \Pi_{k=2}^d\phi_k(t,s,r_k,y_k)dr_2dy_2...dr_d dy_dds.\\[1em]
&\int_t^{T}e^{-\rho (s-t)}\Ex\bigg[e^{\sum_{k=2}^d c_k \hat{L}_s^{k,t}}\partial_{x_i} f_1(s,\hat{{\bf X}}_{s}^{-1,t,{\bf x}}){\bf 1}_{\tau_{x_i}^i\geq s\geq \tau_{x_j}^j}\bigg] h_1(t,s,x_1)ds\\
&=\int_t^{T}e^{-\rho (s-t)}\Ex\bigg[e^{\sum_{k=2}^d c_k \left(\sup_{\ell\in[t,s]}\tilde{W}_{\ell}^{k,t}-x_k\right)^+}\partial_{x_i} f_1\bigg(s,{\bf x}^{-1}-\tilde{{\bf W}}_s^{-1,t}+\bigg(\sup_{\ell\in[t,s]}\tilde{{\bf W}}_{\ell}^{-1,t}-{\bf x}^{-1}\bigg)^+\bigg)\\
&\quad\qquad\qquad\times{\bf 1}_{\sup_{\ell\in[t,s]}\tilde{W}_{\ell}^{i,t}\leq x_i,~\sup_{\ell\in[t,s]}\tilde{W}_{\ell}^{j,t}\geq x_j}\bigg]\partial_{x_1} h_1(t,s,x_1)ds\\
&=\int_t^{T}\int_{0}^{\infty}\int_{-\infty}^{y_2}\cdots\int_0^{x_i}\int_{-\infty}^{y_i}\cdots\int_{x_j}^{\infty}\int_{-\infty}^{y_j}\cdots\int_{0}^{\infty}\int_{-\infty}^{y_d}e^{-\rho(s-t)+\sum_{k=2}^d c_k (y_k-x_k)}\\
&\quad\qquad\times \partial_{x_i}f_1(s,{\bf x}^{-1}-{\bf r}^{-1}+({\bf y}^{-1}-{\bf x}^{-1})^+) h_1(t,s,x_1) \Pi_{k=2}^d\phi_k(t,s,r_k,y_k)dr_2dy_2...dr_d dy_dds,\\[1em]
&\int_t^{T}e^{-\rho (s-t)}\Ex\left[e^{\sum_{k=2}^ d c_k \hat{L}_s^{k,t}}\partial_{x_j}f_1(s,\hat{{\bf X}}_{s}^{-1,t,{\bf x}}){\bf 1}_{\tau_{x_j}^j\geq s\geq \tau_{x_i}^i}\right] h_1(t,s,x_1)ds\\
&=\int_t^{T}\int_{0}^{\infty}\int_{-\infty}^{y_2}\cdots\int_{x_i}^{\infty}\int_{-\infty}^{y_i}\cdots\int_0^{x_j}\int_{-\infty}^{y_j}\cdots\int_{0}^{\infty}\int_{-\infty}^{y_d}e^{-\rho(s-t)+\sum_{k=2}^d c_k (y_k-x_k)}\\
&\quad\qquad\times \partial_{x_j}f_1(s,{\bf x}^{-1}-{\bf r}^{-1}+({\bf y}^{-1}-{\bf x}^{-1})^+)h_1(t,s,x_1) \Pi_{k=2}^d\phi_k(t,s,r_k,y_k)dr_2dy_2...dr_d dy_dds,\\
&\int_t^{T}e^{-\rho (s-t)}\Ex\left[e^{\sum_{k=2}^ d c_k \hat{L}_s^{k,t}}\partial_{x_i x_j}f_1(s,\hat{{\bf X}}_{s}^{-1,t,{\bf x}}){\bf 1}_{s\leq \tau_{x_i}^i\wedge\tau_{x_j}^j}\right] h_1(t,s,x_1)ds\\
&=\int_t^{T}\int_{0}^{\infty}\int_{-\infty}^{y_2}\cdots\int_0^{x_i}\int_{-\infty}^{y_i}\cdots\int_0^{x_j}\int_{-\infty}^{y_j}
\cdots\int_{0}^{\infty}\int_{-\infty}^{y_d}e^{-\rho(s-t)+\sum_{k=2}^d c_k (y_k-x_k)}\\
&\quad\qquad\times \partial_{x_ix_j}f_1(s,{\bf x}^{-1}-{\bf r}^{-1}+({\bf y}^{-1}-{\bf x}^{-1})^+) h_1(t,s,x_1) \Pi_{k=2}^d\phi_k(t,s,r_k,y_k)dr_2dy_2...dr_d dy_dds.
\end{align*}}
Thus, the desired result follows from the equality $\partial_{x_ix_j}\varphi(t,{\bf x})=\sum_{k=1}^d \partial_{x_ix_j}\varphi_k(t,{\bf x})$.
\end{proof}

\noindent
\textbf{Acknowledgements.}\quad {\small The authors are grateful to the anonymous referee for the helpful comments and suggestions.  L. Bo and Y. Huang are supported by National Natural Science of Foundation of China (No. 12471451), Natural Science Basic Research Program of Shaanxi (No. 2023-JC-JQ-05), Shaanxi Fundamental Science Research Project for Mathematics and Physics (No. 23JSZ010) and Fundamental Research Funds for the Central Universities (No. 20199235177). X. Yu is supported by the Hong Kong RGC General Research Fund (GRF) under grant no. 15304122 and grant no. 15306523.}

\end{document}